\numberwithin{equation}{section}
\newtheorem{thm}{Theorem}[section]
\newtheorem{lemma}[thm]{Lemma}
\newtheorem{prop}[thm]{Proposition}
\newtheorem{cor}[thm]{Corollary}
{\theorembodyfont{\rmfamily}
\newtheorem{defn}[thm]{Definition}
\newtheorem{rmk}[thm]{Remark}
}
\newcommand{\qed}{\hfill \mbox{\raggedright \rule{.07in}{.1in}}}
\newenvironment{proof}{\vspace{1ex}\noindent{\bf
Proof}\hspace{0.5em}}{\hfill\qed\vspace{1ex}}
\newenvironment{pfof}[1]{\vspace{1ex}\noindent{\bf Proof of
#1}\hspace{0.5em}}{\hfill\qed\vspace{1ex}}
\newcommand{\R}{{\mathbb R}}
\newcommand{\C}{{\mathbb C}}
\newcommand{\Z}{{\mathbb Z}}
\newcommand{\D}{{\mathbb D}}
\newcommand{\N}{{\mathbb N}}
\renewcommand{\Re}{\operatorname{Re}}
\renewcommand{\Im}{\operatorname{Im}}
\newcommand{\SMALL}{\textstyle}
\title{First and higher order uniform dual ergodic theorems for dynamical systems \\ with infinite measure}
 \author{Ian Melbourne \thanks{Department of Mathematics, University of Surrey,
Guildford, Surrey GU2 7XH, UK}
 \and
 Dalia Terhesiu \thanks{
Department of Mathematics, University of Surrey,
Guildford, Surrey GU2 7XH, UK}
}
\date{25 May 2011}
\begin{document}

\maketitle

\begin{abstract}
We generalize the proof of  Karamata's Theorem by the method of approximation by polynomials  to the operator case. 
As a consequence, we offer a simple proof of \emph{uniform dual ergodicity} 
for a very large class of dynamical systems with infinite measure, and we obtain
bounds on the convergence rate.

In many cases of interest, including the Pomeau-Manneville family of 
intermittency maps, 
the estimates obtained through real Tauberian remainder theory are very
weak.    Building on the techniques of complex Tauberian remainder theory, we develop a method that provides \emph{second (and higher) order asymptotics}.
In the process, we 
derive a \emph{higher order Tauberian theorem} for scalar power series, which to our knowledge, has not previously been covered.
\end{abstract}

\section{Introduction and main results}
\label{sec-intro}

Suppose that  $(X,\mu)$ is an infinite measure space and $f:X\to X$ is
a conservative measure preserving transformation   with transfer operator 
$L:L^1(X)\to L^1(X)$. The transformation $f$ is {\em pointwise dual ergodic}  if there exists a positive sequence $a_n$ such that 
$a_n^{-1}\sum_{j=0}^{n-1}L^jv\to \int_X v\, d\mu$ a.e.\
as $n\to \infty$, for all $v\in L^1(X)$.
If furthermore, there exists $Y\subset X$ with $\mu(Y)\in(0,\infty)$ such that $a_n^{-1}\sum_{j=0}^{n-1}L^j 1_Y\to \mu(Y)$ uniformly on $Y$, then $Y$ is
referred to as a {\em Darling-Kac set}  (see Aaronson~\cite{Aaronson} for further background) and we refer to $f$ as {\em uniformly dual ergodic}.
At present, it is an open question whether every pointwise dual ergodic  transformation has a Darling-Kac set. However, it is desirable to prove  pointwise dual ergodicity by
identifying Darling-Kac  sets, as this facilitates the proof of several strong properties for $f$ (see for instance~\cite{Aaronson86,Aaronson, AaronsonDenker90,ADF92, ADU93, Thaler95,
ThalerZweimuller06,Zweimuller00} and for the setting of Markov chains~\cite{Bingham71, DarlingKac57, Lamperti62}; see also~\cite{BGT} and references therein).

\subsection{Uniform dual ergodicity}

An important class of examples
is provided by interval maps with indifferent fixed points (see for instance~\cite{Thaler83, Thaler95, Zweimuller00}). Standard examples are the family of Pomeau-Manneville intermittency
maps~\cite{PomeauManneville80} which are uniformly expanding except for 
an indifferent fixed point at $0$.  To fix notation, we focus on the version studied
by Liverani~{\em et al.}~\cite{LiveraniSaussolVaienti99}:
\begin{align} \label{eq-LSV}
fx=\begin{cases} x(1+2^\alpha x^\alpha), & 0<x<\frac12 \\ 2x-1, & \frac12<x<1
\end{cases}.
\end{align}
It is well known that for $\alpha\geq 1$, we are in the situation of infinite ergodic theory: there exist a unique (up to scaling) $\sigma$-finite, absolutely continuous invariant measure $\mu$, and $\mu([0,1])=\infty$. Let $\beta=1/\alpha$.  Using the standard procedure of inducing, several studies established that $Y=[\frac12,1]$ is a Darling-Kac set for $f$ with return sequence $a_n$ proportional to $n^\beta$ for $\beta\in (0,1)$, and proportional to $n/\log n$ for $\beta=1$.

An important refinement is the limit theorem of
Thaler~\cite{Thaler95} where the convergence of $a_n^{-1}\sum_{j=1}^nL^jv$
is shown to be uniform on compact subsets of $(0,1]$
for all observables of the form $v=u/h$ where $u$ is Riemann integrable
and $h$ is the density.   

The results of~\cite{Thaler95} are formulated for Markov maps of the interval
with indifferent fixed points. This includes transformations with slowly varying return sequences, the so called $\beta=0$ case. One  such example is
\begin{align} \label{eq-LSV0}
fx=\begin{cases} x(1+xe^{-1/x}), & 0<x<\frac12 \\ 2x-1, & \frac12<x<1
\end{cases},
\end{align}
with return sequence $a_n$ proportional to $\log n$ (see~\cite{Thaler83}).
Zweim\"uller~\cite{Zweimuller98,Zweimuller00} relaxed the Markov
condition and systematically studied non-Markovian nonuniformly
expanding interval maps (so-called AFN maps).  In 
particular,~\cite{Zweimuller98} obtained a spectral decomposition into
 basic (conservative and ergodic) sets and proved that for each basic set there
is a $\sigma$-finite absolutely continuous invariant measure, unique up to
scaling.  
The results in~\cite{Thaler95} on uniform dual
ergodicity were extended in~\cite{Zweimuller00} to the class of AFN maps.

In this paper, we generalize the proof of  Karamata's Theorem by the elementary method of approximation by polynomials~\cite{Karamata30,Karamata31} 
to the operator case. As a consequence, we offer a \emph{simple proof of uniform dual ergodicity} for a large class of dynamical systems with
infinite measure.
This method of proof, combined with techniques from~\cite{MT}, allows us to strengthen the results in~\cite{Thaler95, Zweimuller00}. 

It is convenient to describe our first result in the setting of AFN maps
$f:X\to X$,
though it applies to much more general systems, as described in Section~\ref{sec-upde-rem}.
Let $X'\subset X$ denote the complement of the indifferent fixed
points.   For any compact subset $A\subset X'$,
the construction in~\cite{Zweimuller98} yields 
a suitable first return set $Y$
containing $A$.   Fix such a set $Y$ with first return time function
$\varphi:Y\to\Z^+$, $\varphi(y)=\inf\{n\ge1:f^ny\in Y\}$.   We assume that the tail
probabilities are regularly varying:
$\mu(y\in Y:\varphi(y)>n)=\ell(n)n^{-\beta}$ where $\ell$ is slowly varying
and $\beta\in [0,1]$. 

For~\eqref{eq-LSV}, $\ell(n)$ is asymptotically constant
and $\beta=\frac{1}{\alpha}$.  For~\eqref{eq-LSV0}, $\ell(n)$ is asymptotically
proportional to $1/\log n$ and $\beta=0$.

Define $m(n)=\ell(n)$ for $\beta\in[0,1)$ and $m(n)=\tilde\ell(n)=\sum_{j=1}^n\ell(j)j^{-1}$
for $\beta=1$.     Set $D_\beta=\Gamma(1-\beta)\Gamma(1+\beta)$ for
$\beta\in(0,1)$ and $D_0=D_1=1$.   

\begin{thm} \label{thm-genTZ}
Suppose that $f:X\to X$ is an  AFN map with regularly
varying tail probabilities, $\beta\in[0,1]$.
Consider observables of the form $v=\xi u$ where $\xi$ is
$\mu$-integrable and bounded variation on $X$, and $u$ is Riemann integrable. Then 
\[
\lim_{n\to\infty}a_n^{-1}\sum_{j=0}^{n-1} L^j v=\int_X v\,d\mu
\]
uniformly on compact subsets of $X'$, where $a_n=D_\beta^{-1} n^\beta m(n)^{-1}$.
\end{thm}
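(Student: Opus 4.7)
The plan is to reduce the theorem to operator renewal theory on an induced map, then invoke the operator Karamata Tauberian theorem that is the paper's central analytic device. Fix a compact subset $A\subset X'$. By the construction of Zweim\"uller, one may choose a first return set $Y\supset A$ with first return time $\varphi:Y\to\Z^+$ such that the induced map $F=f^\varphi:Y\to Y$ is uniformly expanding and its transfer operator $R$ acts quasi-compactly on $\BV(Y)$, with a simple leading eigenvalue at $1$ and the rest of the spectrum strictly inside the unit disk.

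Next, set $R_n=1_Y L^n 1_{\{\varphi=n\}}$ and form the generating function $\hat R(z)=\sum_{n\geq 1}z^n R_n$. The operator renewal equation gives $T(z):=\sum_{n\geq 0} z^n\,1_Y L^n 1_Y=(I-\hat R(z))^{-1}$. The tail assumption $\mu(\varphi>n)=\ell(n)n^{-\beta}$, combined with standard quasi-compactness estimates on $\BV(Y)$, produces an expansion of $I-\hat R(z)$ as $z\to 1^-$ in which a scalar factor controlling the $a_n$ asymptotic multiplies the spectral projection onto $\mathrm{span}(1_Y)$, plus an invertible remainder. The operator Karamata theorem, proved earlier in the paper by the elementary polynomial-approximation method generalized from the scalar setting, then passes from this behaviour of $T(z)$ to the partial sum $\sum_{j=0}^{n-1}1_Y L^j 1_Y$, yielding convergence in operator norm on $\BV(Y)$. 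Since $\|\cdot\|_\infty\le\|\cdot\|_{\BV}$, this delivers uniform convergence on $Y\supset A$ of $a_n^{-1}\sum_{j=0}^{n-1} L^j\xi\to\int\xi\,d\mu$ for every $\BV$ observable $\xi$ supported on $Y$.

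It remains to handle the two extensions in the hypothesis. For the Riemann integrable factor $u$, I would use the classical sandwich: approximate $u$ from above and below by $\BV$ step functions with integrals close to $\int u\,d\mu$, apply the $\BV$ result to $\xi_\pm u_\pm$ (after splitting $\xi$ into positive and negative parts), and exploit the positivity of $L$. For observables not supported on $Y$, split $v$ into its restriction to $Y$ and to $X\setminus Y$, and on the complement rewrite $L^j$ using the first hitting time to $Y$; this produces only finitely many bounded corrections that vanish after multiplication by $a_n^{-1}$.

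The principal obstacle is the derivation of the expansion of $(I-\hat R(z))^{-1}$ near $z=1$ uniformly across $\beta\in[0,1]$ and uniformly in the slowly varying function $\ell$, especially at the endpoints $\beta=0$ and $\beta=1$ where classical complex Tauberian arguments degenerate. This is precisely where the elementary polynomial-approximation proof of Karamata, extended to the operator setting, provides a clean and uniform treatment and absorbs the technical core of the argument.
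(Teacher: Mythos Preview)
Your overall architecture matches the paper's: choose $Y\supset A$, verify (H1$'$) and (H2) for $\mathcal{B}=\BV(Y)$, obtain the asymptotics of $T(e^{-u})$ (this is Proposition~\ref{prop-real-MT}), then apply the operator Karamata lemma (Lemma~\ref{lem-unifKara}) to get $a_n^{-1}\sum_{j<n}T_j\to P$ in operator norm, and finally extend to general observables. The sandwich step for the Riemann integrable factor $u$ is exactly what the paper does.

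The genuine gap is your treatment of observables not supported on $Y$. Your claim that rewriting via the first hitting time to $Y$ ``produces only finitely many bounded corrections'' is not correct: the first entrance time to $Y$ is unbounded on $X\setminus Y$, so the decomposition $v=\sum_{k\ge0}v_k$ with $v_k=1_{X_k}v$ (where $X_k=\{x:k=\min\{j\ge0:f^jx\in Y\}\}$) has infinitely many nonzero terms in general. On $Y$ one has the identity
\[
\sum_{j=0}^n L^j v=\sum_{\ell=0}^n\Bigl(\sum_{j=0}^{n-\ell}T_j\Bigr)L^\ell v_\ell,
\]
and to pass to the limit one needs not only $\sum_\ell\int|v_\ell|\,d\mu<\infty$ (which follows from $\mu$-integrability of $v$) but also summability of $\|L^\ell v_\ell\|_{\BV}$, together with a dominated-convergence style argument handling the factors $a_{n-\ell}/a_n$ and $\|D_{n-\ell}\|/a_n$. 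This is the content of the paper's Theorem~\ref{thm-genupde}, and the verification that $\sum_\ell\|L^\ell v_\ell\|_{\BV}<\infty$ for $\mu$-integrable BV functions on an AFN map is a nontrivial structural fact about such maps (the paper imports it from~\cite[Section~11.3]{MT}). Your sketch skips both the correct infinite decomposition and this summability check.
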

The proof of Theorem~\ref{thm-genTZ} is provided in Subsection~\ref{sec-genupde}.

\begin{rmk}  Even in the case of AFN maps, the class of observables $v$ is much larger than in~\cite{Thaler95,Zweimuller00}.   
We note that Theorem~\ref{thm-genTZ} follows from our earlier paper~\cite{MT}
when $\beta\in(\frac12,1]$, 
but that the methods in~\cite{MT} fail for $\beta\in[0,\frac12]$.
\end{rmk}

\subsection{Convergence rates -- real Tauberian theory}

Karamata's approximation by polynomials method (generalized to the operator case) allows us to obtain convergence rates in Theorem~\ref{thm-genTZ}
by mimicking 
the arguments used in real Tauberian remainder theory (see for instance~\cite[Chapter~VII]{Korevaar} and references therein).
Below, we provide an example of such a theorem, restricting to the case of (\ref{eq-LSV0}), where the remainder is optimal. More general examples of Tauberian theorems with
remainders for positive operators are covered in Section~\ref{subsec-taub-rem}. In particular, Theorem~\ref{thm-upde-rem}(b) provides sharp remainders
for a large class of dynamical systems in the $\beta=0$ case.

\begin{thm} \label{thm-genT-rem}
Let $f$ be defined by (\ref{eq-LSV0}) and let $h$ be the density for $\mu$.
Set $c=\frac 12 h(\frac12)$.
Suppose that $v:[0,1]\to\R$ 
is H\"older or bounded variation supported on a compact subset of $(0,1]$. Then 
 \[
c\sum_{j=0}^{n-1} L^j v=\log n\int_0^1 v\,d\mu +O(1),
\] uniformly on compact sets of $(0,1]$.
\end{thm}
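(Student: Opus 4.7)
The plan is to combine the operator renewal framework with a Tauberian remainder estimate in the slowly varying $\beta=0$ regime. Let $Y=[\tfrac12,1]$ with first-return time $\varphi(y)=\inf\{n\geq 1 : f^n y \in Y\}$, and define the ``first-return'' operators $R_n v=L^n(1_{\{\varphi=n\}}v)$ and ``return-to-$Y$'' operators $T_n v = 1_Y L^n(1_Y v)$ acting on $\BV(Y)$. Their generating functions satisfy the operator renewal equation $T(z)=(I-R(z))^{-1}$ for $|z|<1$, with $R(1)$ equal to the induced transfer operator of the first-return map and, as is standard in the AFN setting, enjoying a simple leading eigenvalue $1$ with rank-one spectral projection $Pv=\int_Y v\,d\mu$.

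The first step is to establish the sharp tail asymptotic
\[
\mu(y\in Y:\varphi(y)>n)=\frac{c}{\log n}+O\!\left(\frac{1}{(\log n)^2}\right),\qquad c=\tfrac12 h(\tfrac12),
\]
for the map~(\ref{eq-LSV0}). Define $y_k\in(0,\tfrac12)$ recursively by $y_0=\tfrac12$ and $f(y_k)=y_{k-1}$. The right branch $f|_Y:(\tfrac12,1)\to(0,1)$ is the affine map $y\mapsto 2y-1$, so $\{\varphi>n\}\cap Y$ is the interval $(\tfrac12,\tfrac{1+y_{n-1}}{2}]$ of Lebesgue length $y_{n-1}/2$ abutting $\tfrac12$. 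Approximating the discrete orbit of $f$ near $0$ by the ODE $\dot x=x^2 e^{-1/x}$ and integrating gives $y_k = 1/\log k + O(1/(\log k)^2)$; combined with $h(y)=h(\tfrac12)+O(y-\tfrac12)$ on this shrinking interval, integration yields the asymptotic with constant $c = \tfrac12 h(\tfrac12)$.

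The second and key step converts the scalar tail expansion into the operator bound
\[
\sum_{j=0}^{n-1} T_j = c^{-1}(\log n)\, P + O(1)
\]
in the $\BV(Y)$ operator norm. This is exactly the content of the operator Tauberian remainder theorem developed in Section~\ref{subsec-taub-rem} (Theorem~\ref{thm-upde-rem}(b)): the polynomial-approximation method underlying Theorem~\ref{thm-genTZ} is refined along the lines of the scalar remainder theory in~\cite[Ch.~VII]{Korevaar}, with the two required inputs being spectral simplicity of $R(1)$ and the $O(1/(\log n)^2)$ correction from step one. Applied to $v1_Y\in\BV(Y)$ this gives the theorem for $v$ supported in $Y$.

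Finally, for a H\"older or BV observable $v$ supported on a compact $K\subset(0,1]$ we decompose $v=v1_Y+v1_{X\setminus Y}$. The part $v1_{X\setminus Y}$ lives on a compact subset of $(0,\tfrac12)$ and is rewritten, after finitely many applications of $L$, as a $\BV$ observable on $Y$ using the AFN bounded-distortion structure exactly as in Thaler~\cite{Thaler95}; this contributes only $O(1)$ to $\sum_{j=0}^{n-1} L^j v$ and shifts $n$ by a bounded amount, which is absorbed. Uniformity on compact subsets of $(0,1]$ is inherited from the operator-norm bound on $\BV(Y)$. The main obstacle I expect is step two: the $\beta=0$ slowly varying regime is much more delicate than the algebraic $\beta\in(0,1)$ case, and extracting the optimal $O(1)$ remainder genuinely requires the quantitative second-order tail of step one rather than a mere $o(1/\log n)$ statement.
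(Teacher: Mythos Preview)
Your approach is essentially the paper's: establish the tail asymptotic $\mu(\varphi>n)=c/\log n+O(1/\log^2 n)$ (your step one is the content of Proposition~\ref{prop-tail-PM0}) and then invoke the operator Tauberian remainder Theorem~\ref{thm-upde-rem}(b) with $\ell(n)=c^{-1}\log n$, $\hat\ell\equiv 1$. The one substantive difference is your final extension step. The paper does not fix $Y=[\tfrac12,1]$ and then decompose $v$; instead it exploits the freedom in Proposition~\ref{prop-tail-PM0} to take $Y=[x_q,1]$ with $q$ large enough that $Y$ contains \emph{both} the support of $v$ \emph{and} the compact set on which uniformity is claimed. Then $v\in\mathcal{B}$ already, and Theorem~\ref{thm-upde-rem}(b) yields the conclusion uniformly on $Y$ in one stroke, with no decomposition needed.

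Your decomposition route (a finite-sum version of the argument behind Theorem~\ref{thm-genupde}) is correct for passing from observables supported in $[\tfrac12,1]$ to observables supported on any compact subset of $(0,1]$, though the phrase ``contributes only $O(1)$'' is misleading --- the piece $v1_{X\setminus Y}$ typically contributes a full $\log n\int v1_{X\setminus Y}\,d\mu$ term once pushed into $Y$, and it is only the index shift and boundary terms that are $O(1)$. More importantly, your last sentence has a genuine gap: the operator-norm bound on $\BV(Y)$ gives an $L^\infty$ estimate on $Y=[\tfrac12,1]$ only, so uniformity on compacta lying in $(0,\tfrac12)$ is not ``inherited'' from it. Enlarging $Y$ as the paper does closes this gap for free; with your fixed $Y$ you would need an additional argument to control $L^jv$ at points of $(0,\tfrac12)$.
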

Theorem~\ref{thm-genT-rem} is proved in Section~\ref{sec-upde-rem}.

In the situation of (\ref{eq-LSV}), we obtain the error term $O(n^\beta/\log n)$  (see Theorem~\ref{thm-upde-rem}(a)).  The remainder in this case is not optimal (cf.~\cite[Corollary 9.3]{MT}). 
However, this result is probably the best possible using methods from real Tauberian theory, as discussed in Remark~\ref{rmk-real}.
Hence in the next subsection we turn to the complex theory.

\subsection{Higher order asymptotics -- complex Tauberian theory}

Building on the techniques of complex remainder theory for the scalar case (see Korevaar~\cite[Chapter~III.16]{Korevaar}), we develop a method that provides higher order uniform dual ergodic theorems for infinite measure preserving systems.  
For simplicity, in this paper we focus on the typical case of (\ref{eq-LSV}), but as explained in the sequel, our method applies to other cases of interest. Also, the case $\beta=1$  of (\ref{eq-LSV}) has been fully understood in~\cite{MT}; more general examples are considered in work in progress. Hence higher order theory for the case of $\beta=1$  is not considered in this paper.

\begin{thm} \label{thm-LSV-HO}
Let $f$ be defined by (\ref{eq-LSV}) with $\beta\in(0,1)$. 
Suppose that $v:[0,1]\to\R$ 
is H\"older or bounded variation supported on a compact subset of $(0,1]$.

Set $k=\max\{j\geq 0: (j+1)\beta-j>0\}$. 
Then for any $\epsilon>0$,
\[\sum_{j=0}^{n-1} L^j v=( C_0n^{\beta} +C_1 n^{2\beta-1}+C_2 n^{3\beta-2}+\cdots
+C_k n^{(k+1)\beta-k})\int_0^1\!\! v\,d\mu +O(n^{\epsilon}),\]uniformly on compact subsets of $(0,1]$,
where $C_1,C_2,\ldots$ are real nonzero constants (depending only on $f$). 
\end{thm}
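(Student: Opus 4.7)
The plan is to combine operator renewal theory with a higher-order complex Tauberian remainder theorem, extending the scalar argument of Korevaar~\cite[Ch.~III.16]{Korevaar} to the operator setting. First I would induce on $Y=(1/2,1]$ with first return time $\varphi$ and introduce the operator-valued power series $\hat T(s)=\sum_{n\geq 1}T_n s^n$ on the Banach space of BV (or H\"older) functions on $Y$, where $T_n v = 1_Y L^n(1_{\{\varphi=n\}}\,v)$. The standard renewal identity (as in~\cite{MT}) expresses the generating function $\hat G(s):=\sum_{n\geq 0}(L^n v)s^n$ in the form $\hat A(s)+\hat B(s)(I-\hat T(s))^{-1}\hat C(s)v$, where $\hat A,\hat B,\hat C$ are operator families that are analytic in a neighbourhood of the closed unit disk (using that $v$ is supported away from the indifferent fixed point $0$, and that we evaluate $L^j v$ on a compact subset of $(0,1]$). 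This translates the Ces\`aro asymptotics of $L^j v$ into a question about the singular behaviour of $\hat G(s)/(1-s)$ at $s=1$.

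Next I would produce a full asymptotic expansion of the return-time distribution
\[
\mu(\varphi=n)=\sum_{j=0}^{K}\gamma_j\,n^{-\beta-1-j}+O(n^{-\beta-K-2})
\]
to arbitrary order $K$, which follows from the explicit structure of $f$ near $0$ by a standard induction on successive return-time levels. Summing against $s^n$ and applying Keller--Liverani perturbation yields an expansion of the leading eigenvalue of $\hat T(s)$,
\[
1-\lambda(s)=\sum_{j=0}^{K}\delta_j(1-s)^{\beta+j}+\sum_{j=1}^{K}p_j(1-s)^j+O((1-s)^{K+1}),
\]
with $\delta_0>0$, together with analyticity of the associated spectral projection at $s=1$. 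Expanding
\[
\frac{1}{1-\lambda(s)}=\frac{1}{\delta_0(1-s)^\beta}\bigl[1+r(s)\bigr]^{-1},\qquad r(s)=O((1-s)^{\min\{1,\,1-\beta\}}),
\]
as a geometric series and collecting like terms, one obtains
\[
\hat G(s)=\sum_{j=0}^{k}B_j(1-s)^{j-(j+1)\beta}+\Phi(s),
\]
where $k$ is precisely the largest index with $j-(j+1)\beta<0$ and $\Phi(s)=O((1-s)^{-\epsilon})$ as $s\to 1$; the leading constant $B_0$ reduces to Thaler--Zweim\"uller's $C_0\int v\,d\mu$ because the spectral projection at $s=1$ is integration against $\mu$ times the invariant density.

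Finally I would apply a complex Tauberian remainder theorem to convert each singularity of $\hat G(s)$ into the corresponding $n^{(j+1)\beta-j}$ term of $S_n(v):=\sum_{j=0}^{n-1}L^j v$. Starting from the Cauchy-type representation $S_n(v)=(2\pi i)^{-1}\oint \hat G(s)(1-s)^{-1}s^{-n-1}\,ds$ on a circle of radius $\rho<1$, deform the contour onto a keyhole around $s=1$, using (i) an aperiodicity/Dolgopyat-type estimate giving boundedness of $(I-\hat T(s))^{-1}$ on $|s|\le 1$, $s\ne 1$, outside a small arc around $1$, and (ii) the sharp local expansion above on a cone at $s=1$. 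Each Hankel integral of $(1-s)^{j-(j+1)\beta-1}s^{-n-1}$ contributes the expected $C_j n^{(j+1)\beta-j}\int v\,d\mu$, while the controlled remainder $\Phi(s)/(1-s)$ yields $O(n^\epsilon)$; uniformity on compact subsets of $(0,1]$ is inherited from the continuous dependence of $\hat A,\hat B,\hat C$ on the evaluation point.

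\textbf{Main obstacle.} The core difficulty is the operator-valued complex Tauberian step: one needs both a sharp local expansion of $\hat G(s)$ at $s=1$ and uniform polynomial control of $(I-\hat T(s))^{-1}$ in terms of $|1-s|^{-1}$ on the rest of the closed unit disk, the latter resting on an aperiodicity estimate for $\hat T(s)$ on $\{|s|=1\}\setminus\{1\}$ which is not automatic in the infinite-measure setting and must be established for~\eqref{eq-LSV}. A secondary task is tracking coefficients through the geometric expansion to show that $C_1,\ldots,C_k$ are genuinely nonzero, which reduces to verifying that the analytic (integer-power) part of $\lambda(s)$ contributes at every order up to $k$.
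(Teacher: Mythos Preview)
Your overall architecture---reduce to a first-return set $Y$, use the operator renewal identity $T(z)=(I-R(z))^{-1}$, expand $1-\lambda(z)$ via the tail asymptotics of $\varphi$, and then run a complex Tauberian argument---matches the paper's. But the Tauberian step you propose is substantially different from what the paper does, and the difference is exactly where your stated ``main obstacle'' lies.

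You propose a Cauchy integral for $S_n$ followed by deformation onto a keyhole contour at $s=1$, which forces you to control $(I-R(s))^{-1}$ on the whole closed disk away from $1$; you correctly flag that this needs an aperiodicity estimate for $R(e^{i\theta})$ on $\{|s|=1\}\setminus\{1\}$. The paper avoids this entirely. It uses the Subhankulov--Korevaar device (Lemma~\ref{prop-Kor} and Corollary~\ref{part-cor}): for $r=e^{-1/n}$ and a short arc $|\theta|\le \alpha=n^{-\gamma}$, the weighted integral
\[
\int_{-\alpha}^{\alpha}\frac{T(re^{i\theta})}{1-re^{i\theta}}(e^{i\theta}-e^{i\alpha})^p(e^{i\theta}-e^{-i\alpha})^p e^{-in\theta}\,d\theta
\]
already captures $\sum_{j\le n}T_j$ up to a controllable error, with no information about $T(z)$ outside this arc required. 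Everything happens strictly inside the open disk, so only the local expansion of $T(z)$ near $z=1$ (Lemma~\ref{lem-zest-error}) is needed, and (H1), (H2) suffice---no aperiodicity. Your approach would work for the specific map~\eqref{eq-LSV} (which is aperiodic), but it imports an unnecessary hypothesis and a harder analytic step; the paper's route dissolves your main obstacle rather than confronting it.

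Two smaller points. First, your $T_n v=1_YL^n(1_{\{\varphi=n\}}v)$ is actually $R_n$, not $T_n$; the renewal resolvent is $(I-R(s))^{-1}$. Second, the tail expansion you posit, $\mu(\varphi=n)=\sum_j\gamma_j n^{-\beta-1-j}+\cdots$, is not the right shape for~\eqref{eq-LSV}: Proposition~\ref{prop-tail-PM} gives $\mu(\varphi>n)=cn^{-\beta}-b(n)+O(n^{-(\beta+1)})$ with $b(n)$ monotone and $O(n^{-2\beta})$, so the relevant secondary scale is $n^{-2\beta}$, not $n^{-\beta-1}$. This is what produces the $c_H(u-i\theta)^{1-2\beta}$ term in Lemma~\ref{lem-zest-error} and hence the geometric ladder $(u-i\theta)^{j-(j+1)\beta}$; your integer-shift ansatz would generate a different (and incorrect) set of exponents.
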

Theorem~\ref{thm-LSV-HO} is proved in Section~\ref{sec-higher-order}.

\begin{rmk} \label{rmk-second}
In particular, when $\beta>\frac12$ we obtain the second order asymptotics $\lim_{n\to\infty}n^{1-\beta}(n^{-\beta}\sum_{j=0}^{n-1} L^j v- C_0 \int_0^1 v\,d\mu)=C_1 \int_0^1 v\,d\mu$.
\end{rmk}

\begin{rmk}
 In~\cite{MT}, we obtained first and higher order asymptotics of the iterates 
$L^n$ but the methods require $\beta>\frac12$.   As a byproduct, we obtained  
a weakened version of Theorem~\ref{thm-LSV-HO} with $n^\epsilon$ replaced by $n^\frac12$.   In particular, the second order asymptotics result in Remark~\ref{rmk-second} is contained in~\cite{MT} only for $\beta>\frac34$.
\end{rmk}

In the process of proving Theorem~\ref{thm-LSV-HO}, we obtain the following complex Tauberian theorem with remainder, which to our knowledge has not been previously considered. 

\begin{thm}\label{thm-taub} 
Let $\Phi(z)=\sum_{j=0}^\infty u_j z^j$ be a  convergent power series for $|z|<1$ with $|u_j|=O(1)$. Let  $1>\gamma_1>\gamma_2>\cdots>\gamma_k>0$, where $k\geq 0$.   Write $z=e^{-u+i\theta}$, $u>0$, $\theta\in [0, 2\pi)$. Suppose that 
\begin{equation*}
\Phi(z)=\sum_{r=1}^k A_r (u-i\theta)^{-\gamma_r} +O(1),\mbox{ as }u,\theta\to 0
\end{equation*} for $A_1,\dots,A_k$ real constants. Then for any $\epsilon>0$,
\begin{equation*}
\sum_{j=0}^{n-1}u_j=\sum_{r=1}^k\frac{A_r}{\Gamma(1+\gamma_r)}n^{\gamma_r}+O(n^\epsilon).
\end{equation*}
\end{thm}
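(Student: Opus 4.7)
Following Korevaar's complex Tauberian remainder approach \cite[Chapter~III.16]{Korevaar}, the plan has three stages.

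\emph{Stage 1 (Reduction).} First I would subtract the singular terms: set $\Psi(z) = \Phi(z) - \sum_{r=1}^k A_r (1-z)^{-\gamma_r}$, using the principal branch. The elementary expansion $(1 - e^{-u+i\theta})^{-\gamma_r} = (u-i\theta)^{-\gamma_r}(1 + O(|u-i\theta|))$, together with $\gamma_r<1$, shows that $(1-z)^{-\gamma_r} - (u-i\theta)^{-\gamma_r}$ vanishes as $z\to 1$ inside the disk, so the hypothesis yields $\Psi(z)=O(1)$ near $z=1$. The partial sums $\sum_{j<n}[z^j](1-z)^{-\gamma_r} = \binom{n+\gamma_r-1}{n-1}$ expand by Stirling as $n^{\gamma_r}/\Gamma(1+\gamma_r) + O(n^{\gamma_r-1})$, producing the main term. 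Since the coefficients $c_j := [z^j]\Psi$ inherit $|c_j|=O(1)$, the problem reduces to showing $\sigma_n := \sum_{j<n} c_j = O(n^\epsilon)$.

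\emph{Stage 2 (Smoothing and Laplace representation).} Fix $\epsilon>0$ and choose $\rho \in C_c^\infty(\mathbb{R})$ with $\rho\equiv 1$ on $[0,1]$ and $\operatorname{supp}\rho\subset[0,1+\eta]$, $\eta = n^{\epsilon-1}$. The Tauberian bound gives $|\sigma_n - \tilde\sigma_n| = O(n\eta) = O(n^\epsilon)$ for $\tilde\sigma_n := \sum_j c_j \rho(j/n)$. By inverse Laplace with $\hat\rho(s) = \int_0^\infty \rho(x) e^{-sx}\,dx$,
\[\tilde\sigma_n = \frac{1}{2\pi i}\int_{-\alpha - i\infty}^{-\alpha + i\infty}\hat\rho(s)\,\Psi(e^{s/n})\,ds\]
(principal value) for any $\alpha>0$. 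Integration by parts, using $\rho(0)=1$ and vanishing of derivatives at the endpoints, yields $\hat\rho(s) = 1/s + R(s)$, where iterated IBP against $\rho^{(N)}$ (supported on an interval of length $\eta$ with sup norm $O(\eta^{-N})$) gives $|R(s)| \leq C_N/(|s|^N \eta^{N-1})$ for each $N$. Split accordingly, $\tilde\sigma_n = T_1 + T_2$.

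\emph{Stage 3 (Contour evaluation).} I would compute $T_1$ by contour deformation: $\Psi(e^{s/n})/s$ is holomorphic in $\Re s<0$ and the horizontal connecting segments at $\Im s = \pm T$ vanish, so the contour can be shifted to $\Re s = -M$ for any large $M$. For $M>n$ the series $\Psi(e^{s/n}) = \sum_j c_j e^{sj/n}$ converges absolutely and uniformly in $\Im s$, permitting termwise integration; since $\tfrac{1}{2\pi i}\int_{(c)} e^{sj/n}/s\,ds = 0$ for $j\geq 1$ (closing leftward, no residue) and $= -\tfrac12$ for $j=0$, one obtains $T_1 = -c_0/2 = O(1)$. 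For $T_2$, split at $|\Im s| = \delta n$: on the near region $|\Psi(e^{s/n})| = O(1)$ by hypothesis and $\int|R|\,dt = O(\log(1/\eta)) = O(\log n)$; on the far region use the trivial bound $|\Psi(e^{s/n})| = O(n)$ against the rapid decay of $R$, choosing $N \geq \lceil 1/\epsilon\rceil + 2$ so this contribution is $O(n^\epsilon)$. Combining, $\tilde\sigma_n = O(n^\epsilon)$, hence $\sigma_n = O(n^\epsilon)$, completing the proof.

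\emph{Main obstacle.} The chief technical difficulty is the $1/s$ component of $\hat\rho$, which decays only as $|s|^{-1}$ on vertical lines: naive $L^1$ estimates against the trivial bound $|\Psi(e^{s/n})|=O(n)$ diverge, and one cannot simply absorb this into the remainder. The resolution is that the principal-value integral $T_1$ is in fact an explicit constant $-c_0/2$, extracted by contour shift leftward using analyticity of $\Psi(e^{s/n})$ in $\Re s<0$. Once $T_1$ is pinned down, only the rapidly decaying $R(s)$ remains to be estimated, which is routine.
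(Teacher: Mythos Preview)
Your argument is essentially correct, but it follows a genuinely different route from the paper's own proof. The paper does \emph{not} subtract the singular part or introduce a smooth cutoff $\rho$; instead (see Remark~\ref{rmk-compl-taub}) it runs the proof of Theorem~\ref{beta>half} verbatim in the scalar setting. That proof follows Korevaar~\cite[III.16]{Korevaar} and Subhankulov~\cite{Subhankulov60}: one integrates $\Phi(re^{i\theta})/(1-re^{i\theta})$ against the polynomial kernel $(e^{i\theta}-e^{i\alpha})^p(e^{i\theta}-e^{-i\alpha})^p$ on a short arc $|\theta|\le\alpha=n^{-\gamma}$ with $r=e^{-1/n}$, $p=2$. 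Lemma~\ref{prop-Kor} shows that this integral equals $2\pi r^{n-2p}(1-2r\cos\alpha+r^2)^p\sum_{j\le n-2p}u_j$ plus a controllable error, and the main term is then extracted from the hypothesis on $\Phi$ via the contour computation of Corollary~\ref{contb}. No reduction to a bounded $\Psi$ is made; the singular terms $(u-i\theta)^{-\gamma_r}$ are handled directly inside the integral.

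Your approach trades the explicit polynomial kernel for a smooth cutoff and Laplace inversion, and isolates the main terms by the binomial subtraction. This makes the bookkeeping of the leading asymptotics cleaner (Stirling on binomials instead of the contour integral in Corollary~\ref{contb}), and the rapid decay of $R(s)$ replaces the algebraic estimates of Proposition~\ref{A-s}. The price is the delicate handling of the $1/s$ piece of $\hat\rho$: your observation that the principal-value integral $T_1$ collapses to the constant $-c_0/2$ after shifting the contour leftward and integrating termwise is the crux, and it does work (Jordan's lemma plus a dominated-convergence argument for the series), though it deserves a line or two more justification than you give. Two small points: your Laplace-inversion identity for $\tilde\sigma_n$ is off by $c_0/2$ because the extended-by-zero $\rho$ has a jump at $0$ (harmless, $O(1)$); and in the near-region bound you are implicitly using the $N=1$ case of your $R$-estimate to get $\int|R|\,dt=O(\log n)$, while reserving large $N$ for the far region --- this is correct but worth stating explicitly. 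The paper's polynomial-kernel method avoids both of these subtleties at the cost of the somewhat opaque identity in Lemma~\ref{prop-Kor}.
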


\subsection{Renewal sequences}

Theorem~\ref{thm-taub} has immediate applications to  \emph{scalar renewal sequences with infinite mean}. To make this explicit we recall some basic background on scalar
renewal theory. For more details we refer the reader to~\cite{Feller66,BGT}. Let $(X_i)_{i\geq 1}$ be a sequence of  positive integer-valued independent identically distributed
random variables with probabilities $P(X_i=j)=f_j$.  
Define the partial sums  $S_n=\sum_{j=1}^n X_j$, and set $u_0=1$ and
$u_n=\sum_{j=1}^n f_j u_{n-j}$, $n\ge1$. Then it is easy to see that  $u_n=\sum_{j=0}^n P(S_j=n)$. The sequences $(u_n)_{n\geq 0}$ are called {\em renewal sequences}.

The analysis of
scalar renewal sequences with infinite mean relies crucially on the assumption of regularly varying tails: $\sum_{j>n}f_j = \ell(n)n^{-\beta}$, where $\ell$ is slowly varying and $\beta\in [0,1]$
(see~\cite{Feller66,BGT} and references therein). 
Then Karamata's Tauberian theorem yields
$\sum_{j=0}^n u_j\sim D_\beta^{-1}n^\beta m^{-1}(n)$, where $D_\beta$ and $m(n)$ are as defined above. 
(For results on first order asymptotics for $u_n$ we refer to~\cite{BGT, Erickson,GarsiaLamperti62}. )

A natural problem is to consider higher order expansions of $\sum_{j=0}^n u_j$.
Suppose for example that $\sum_{j>n}f_j = c n^{-\beta}+b(n)+c(n)$, where $b(n)=O(n^{-2\beta})$ and $c(n)$ is summable. If $\beta\leq 1/2$, assume further that $b(n)$ 
is monotone. Set $k=\max\{j\geq 0: (j+1)\beta-j>0\}$. 
It follows from the methods in this paper (specifically Theorem~\ref{thm-taub} together with a scalar version of Lemma~\ref{lem-zest-error}), that for any $\epsilon>0$,
\[
\sum_{j=0}^n u_j=( C_0n^{\beta} +C_1 n^{2\beta-1}+C_2 n^{3\beta-2}+\cdots
+C_k n^{(k+1)\beta-k}) +O(n^{\epsilon}), 
\]
where $C_1,C_2,\ldots$ are real nonzero constants.

Our method for proving uniform dual ergodic theorems centres around an operator
version of renewal sequences.
Let $(X,\mu)$ be a measure space (finite or infinite), and 
$f:X\to X$ a conservative 
measure preserving map.   Fix $Y\subset X$ with $\mu(Y)\in(0,\infty)$.
Let $\varphi:Y\to\Z^+$ be the first return time 
$\varphi(y)=\inf\{n\ge1:f^ny\in Y\}$ (finite almost everywhere by 
conservativity).  Let $L:L^1(X)\to L^1(X)$ denote the transfer operator 
 for $f$ and define
\[
T_n=1_YL^n1_Y,\enspace n\ge0, \qquad R_n=1_YL^n1_{\{\varphi=n\}},\enspace n\ge1.
\]
Thus $T_n$ corresponds to $u_n$ (returns to $Y$) and $R_n$ corresponds to $f_n$ 
(first returns).   The relationship $T_n=\sum_{j=1}^n T_{n-j}R_j$
generalises the notion of scalar renewal sequences.

Operator renewal sequences were introduced by Sarig~\cite{Sarig02} to study
lower bounds for mixing rates associated with finite measure preserving  systems, and
this technique was substantially extended and refined by Gou{\"e}zel~\cite{Gouezel04, Gouezel05}. The authors~\cite{MT} developed a theory of
renewal operator sequences for dynamical systems with infinite measure, generalizing the results of~\cite{GarsiaLamperti62,Erickson} to the operator case and obtaining mixing rates for a large class of systems including \eqref{eq-LSV} for $\beta\in(\frac12,1]$.
The uniform dual ergodic theorems proved in this paper follow from an operator version of Theorem~\ref{thm-taub}, namely  Theorem~\ref{beta>half}.

\vspace{1ex}
The rest of the paper is organised as follows.
In Section~\ref{sec-ORT}, we describe the general framework
for our results on the renewal operators $T_n$.
In Section~\ref{sec-upde-rem}, we generalize the proof of  Karamata's Theorem by the elementary method of approximation by polynomials  and obtain  uniform dual ergodic theorems with remainders  for a  large class of dynamical systems with
infinite measure.
Section~\ref{sec-higher-order} is devoted to higher order uniform dual ergodic theorems for a large class of systems.
Appendix~\ref{app-proofs} contains the proof of several technical
results stated in Section~\ref{sec-ORT}. 
Appendix~\ref{app-cont-int} contains computations of some complex contour integrals. 
In  Appendix~\ref{app-tails}, we provide explicit tail probabilities for (\ref{eq-LSV}) and (\ref{eq-LSV0}).

\vspace{-2ex}
\paragraph{Notation}
We use ``big O'' and $\ll$ notation interchangeably, writing
$a_n=O(b_n)$ or $a_n\ll b_n$ as $n\to\infty$ if there is a constant
$C>0$ such that $a_n\le Cb_n$ for all $n\ge1$.

\section{General framework}
\label{sec-ORT}

Let $(X,\mu)$ be an infinite measure space, and $f:X\to X$ a conservative
measure preserving map.   Fix $Y\subset X$ with $\mu(Y)=1$.
Let $\varphi:Y\to\Z^+$ be the first return time $\varphi(y)=\inf\{n\ge1:f^ny\in Y\}$
and define the first return map $F=f^\varphi:Y\to Y$.

The return time function $\varphi:Y\to\Z^+$ satisfies $\int_Y
\varphi\,d\mu=\infty$.  
We require that
\begin{itemize}
\item[]
$\mu(y\in Y:\varphi(y)>n)=\ell(n)n^{-\beta}$ where $\ell$ is slowly varying and
$\beta\in[0,1]$.
\end{itemize}

Recall that the transfer operator $R:L^1(Y)\to L^1(Y)$ for the first return
map $F:Y\to Y$ is defined via the formula
$\int_Y Rv\,w\,d\mu = \int_Y v\,w\circ F\,d\mu$, $w\in L^\infty(Y)$.
Let $\D=\{z\in\C:|z|<1\}$ and
$\bar\D=\{z\in\C:|z|\le1\}$.
Given $z\in\bar\D$, we define $R(z):L^1(Y)\to L^1(Y)$ to be
the operator $R(z)v=R(z^\varphi v)$.   Also, for each $n\ge1$, we define
$R_n:L^1(Y)\to L^1(Y)$, $R_nv=R(1_{\{\varphi=n\}}v)$.
It is easily verified that $R(z)=\sum_{n=1}^\infty R_nz^n$.

Our assumptions on the first return map $F:Y\to Y$ are
functional-analytic. We assume that there is a function
space $\mathcal{B}\subset L^\infty(Y)$ containing constant
functions, with norm $\|\;\|$ satisfying $|v|_\infty\le \|v\|$ for
$v\in\mathcal{B}$, such that 
\begin{itemize}
\item[(H1$'$)]
$R_n:\mathcal{B}\to\mathcal{B}$ are bounded linear operators satisfying
$\sum_{n=1}^\infty\|R_n\|<\infty$.
\end{itemize}
It follows that $z\mapsto R(z)$ is a continuous family of bounded linear operators
on $\mathcal{B}$ for $z\in\bar\D$. Since $R(1)=R$ and $\mathcal{B}$
contains constant functions, $1$ is an eigenvalue of $R(1)$.
Throughout, we assume:
\begin{itemize}
\item[(H2)] The eigenvalue $1$ is simple and isolated in the spectrum of
$R(1)$.
\end{itemize}
In Section~\ref{sec-upde-rem}, we prove that uniform dual ergodicity 
holds under (H1$'$) and (H2).
However, remainders and higher order theory require an improved version of (H1$'$).  
For simplicity, we will assume (H1) below, though many results can be obtained
under weaker assumptions.
\begin{itemize}
\item[(H1)] There is a constant $C>0$ such that
$\|R_n\|\le C\mu(\varphi=n)$ for all $n\ge1$.
\end{itemize}

\begin{rmk}
In~\cite{MT}, we studied the asymptotics of the iterates $L^n$
under conditions (H1) and (H2) together with an {\em aperiodicity} assumption
(H2(ii) in~\cite{MT}).   Moreover, generally the existence of an asymptotic expression for $L^n$ requires $\beta>\frac12$.
Uniform dual ergodicity deals with the asymptotics of $\sum_{j=1}^n L^j$, and
we prove in this paper that conditions (H1$'$) and (H2) suffice, with
no restriction on $\beta$.
\end{rmk}

Define the bounded linear operators
\[
T_n=1_YL^n1_Y,\enspace n\ge0, \qquad
R_n=1_YL^n1_{\{\varphi=n\}}=R1_{\{\varphi=n\}},\enspace n\ge1.
\]
(Here, $T_0=I$.) The power series
\[
T(z)=\sum_{n=0}^\infty T_n z^n,\enspace z\in\D,\qquad
R(z)=\sum_{n=1}^\infty R_n z^n,\enspace z\in\bar\D,
\]
are analytic on the open unit disk $\D$, and $R(z)$ is continuous on
$\bar\D$ by (H1).  We have the usual relation $T_n=\sum_{j=1}^n
T_{n-j}R_j$ for $n\ge1$, and it follows that $T(z)=I+T(z)R(z)$ on
$\D$.  Hence $T(z)=(I-R(z))^{-1}$ on $\D$.

\subsection{Asymptotics of $T(z)$ on $\D$}
\label{subsec-ass-T(z)}

Under an aperiodicity assumption, we obtained in~\cite{MT} the asymptotics of $T_n$ by estimating the Fourier coefficients of $T(e^{i\theta})$. 
The asymptotic expansion of $T(e^{i\theta})$ as $\theta\to 0$ is a key ingredient of the argument in~\cite{MT}.  

The corresponding key ingredients for the results in this paper
are the asymptotic expansion of $T(e^{-u})$ as $u\to0^+$ ($u$ real)
for first order uniform dual ergodicity with remainders, and
the asymptotic expansion of $T(z)$ as $z\to1$ ($z\in\D$) for higher order
results.

We recall the first order asymptotics of $T(e^{-u})$ from~\cite{MT}.
Denote the spectral projection corresponding to the simple eigenvalue $1$
for $R$ by $Pv=\int_Y v\,d\mu$.
\begin{prop}[\textbf{\cite[Proposition 7.1]{MT}}]
\label{prop-real-MT} Assume  (H1$'$) and (H2). Suppose
that $\mu(\varphi>n)=\ell(n) n^{-\beta}$ where $\ell$
is slowly varying and $\beta\in[0,1]$.  Define
$\tilde\ell(n) =\sum_{j=1}^n\ell(j)j^{-1}$.   Then
\[
T(e^{-u})\sim \begin{cases}
\tilde\ell(\frac{1}{u})^{-1}u^{-1}P, & \beta=1, \\
 \Gamma(1-\beta)^{-1}\ell(\frac{1}{u})^{-1}u^{-\beta}P, & \beta\in[0,1),
\end{cases}
\]
as $u\to 0^+$. 
(Recall that $A(x)\sim c(x) A$ for bounded linear operators $A(x),A:\mathcal{B}\to\mathcal{B}$ if $\|A(x)-c(x)A\|=o(c(x))$.)   \qed
\end{prop}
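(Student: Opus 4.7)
The plan is to apply analytic perturbation theory to $R(z)$ near $z=1$ and combine the resulting spectral decomposition of $T(z)=(I-R(z))^{-1}$ with a scalar Karamata computation for the Laplace transform of $\varphi$. By (H2), $1$ is a simple isolated eigenvalue of $R(1)=R$, with rank-one spectral projection $Pv=\int_Y v\,d\mu$ (with $\mathbf 1$ the right eigenvector, and $v\mapsto\int_Y v\,d\mu$ the left eigenvector, since $R$ is the transfer operator of $F$). Norm-continuity of $z\mapsto R(z)$ on $\bar\D$ supplied by (H1$'$) and standard perturbation theory then yield, for $z$ in a neighbourhood of $1$, a continuous simple eigenvalue $\lambda(z)$ with $\lambda(1)=1$ and a continuous rank-one projection $P(z)$ with $P(1)=P$, so that
\[
R(z)=\lambda(z)P(z)+Q(z),\qquad T(z)=\frac{P(z)}{1-\lambda(z)}+(I-Q(z))^{-1}(I-P(z)),
\]
with $P(z)Q(z)=Q(z)P(z)=0$ and the spectrum of $Q(z)$ uniformly bounded away from $1$. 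The second summand remains bounded in operator norm as $z\to1$, so it suffices to determine the asymptotics of $(1-\lambda(e^{-u}))^{-1}P(e^{-u})$, and since $P(e^{-u})\to P$ in norm, of $1-\lambda(e^{-u})$.

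To compute $1-\lambda(z)$, I would use first-order perturbation theory together with the identity
\[
\int_Y R(z)\mathbf 1\,d\mu=\int_Y R(z^\varphi)\,d\mu=\int_Y z^\varphi\,d\mu,
\]
which follows from $R$ preserving $\int_Y\cdot\,d\mu$. This gives $1-\lambda(z)\sim 1-\int_Y z^\varphi\,d\mu$ as $z\to1$, with a correction of order $\|R(z)-R\|^2$. Setting $z=e^{-u}$ and summing by parts, $1-\int_Y e^{-u\varphi}d\mu=u\int_0^\infty e^{-un}\mu(\varphi>n)\,dn$; the scalar Karamata Tauberian theorem applied to the regularly varying tail $\mu(\varphi>n)=\ell(n)n^{-\beta}$ then produces
\[
1-\int_Y e^{-u\varphi}d\mu\sim\begin{cases}\Gamma(1-\beta)\,\ell(1/u)\,u^\beta, & \beta\in[0,1),\\[2pt] u\,\tilde\ell(1/u), & \beta=1,\end{cases}
\]
where the $\beta=1$ case uses the de Haan refinement and $\beta=0$ is the degenerate case $\Gamma(1)=1$. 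Substituting back yields the claimed asymptotics for $T(e^{-u})$.

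The main obstacle is the second step: confirming that the quadratic remainder from perturbation theory is genuinely of lower order than $1-\int_Y z^\varphi\,d\mu$, which is itself only a slowly varying function times $u^\beta$ (or $u\tilde\ell(1/u)$) and so could a priori be comparable to $\|R(z)-R\|^2$. Under (H1$'$) one has the crude bound $\|R(z)-R\|\le\sum_n|z^n-1|\|R_n\|$, and the matching of leading terms has to be obtained by an Abel-summation argument exploiting the absolute summability of $\|R_n\|$ and the regular variation of the tail. The boundary cases $\beta\in\{0,1\}$ are also the most delicate part of the Karamata step, requiring the refined versions of the theorem that handle slowly varying correction factors such as $\tilde\ell$.
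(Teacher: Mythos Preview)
Your approach is essentially the one the paper uses (visible in its proof of the closely related Lemma~\ref{lem-zestimate}; Proposition~\ref{prop-real-MT} itself is quoted from~\cite{MT} without proof). The spectral decomposition~\eqref{exprT}, the boundedness of the complementary part (Proposition~\ref{prop-PQ}), the reduction to the asymptotics of $1-\lambda(e^{-u})$, and the Karamata/Abel computation for the Laplace transform of $\mu(\varphi>n)$ are all exactly right.

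The one place where you lose information is in obtaining $1-\lambda(z)\sim 1-\int_Y z^\varphi\,d\mu$ via abstract first-order perturbation theory with an $O(\|R(z)-R\|^2)$ remainder. Your worry here is justified: under (H1$'$) alone the tail $\sum_{n>N}\|R_n\|$ decays at no specified rate, so there is no a priori reason for $\|R(e^{-u})-R\|^2$ to be $o(\ell(1/u)u^\beta)$, and the Abel-summation patch you propose would be awkward. The paper sidesteps this entirely by using an \emph{exact} eigenvalue formula rather than a perturbative one. If $v(z)$ denotes the eigenfunction of $R(z)$ normalised by $\int_Y v(z)\,d\mu=1$, then integrating $\lambda(z)v(z)=R(z)v(z)=R(z^\varphi v(z))$ against $d\mu$ gives
\[
\lambda(z)=\int_Y z^\varphi v(z)\,d\mu,\qquad\text{hence}\qquad
1-\lambda(e^{-u})=\int_Y(1-e^{-u\varphi})\,v(e^{-u})\,d\mu.
\]
Since $|v(e^{-u})-1|_\infty=o(1)$ by norm continuity of eigenvectors (which follows from (H1$'$) alone) and $1-e^{-u\varphi}\ge0$, this immediately yields
\[
1-\lambda(e^{-u})=(1+o(1))\int_Y(1-e^{-u\varphi})\,d\mu,
\]
with the error already \emph{relative} to the main term; no separate control on $\|R(z)-R\|$ is required. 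After this adjustment your Karamata step (including the de Haan refinement for $\beta=1$) finishes the proof exactly as you outline.
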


\begin{rmk}\cite[Proposition 7.1]{MT} does not contain the case $\beta=0$, but the proof in~\cite{MT} is easily extended to this case.
\end{rmk}

We now state three results that are proved in Appendix~\ref{app-proofs}.
The next  lemma provides the first  order expansion of $T(z)$ for $z\in\D$.
\begin{lemma} \label{lem-zestimate} 
Assume (H1$'$) and (H2).  Suppose
that $\mu(\varphi>n)=\ell(n) n^{-\beta}$ where $\ell$
is slowly varying and $\beta\in(0,1)$. Write $z= e^{-u+i\theta},  u>0$. 
Then 
\[
\Gamma(1-\beta)T(z)\sim \ell(1/|u-i\theta|)^{-1}(u-i\theta)^{-\beta}P,
\enspace\text{as $z\to1$}.
\]
\end{lemma}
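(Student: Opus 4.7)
The plan is to parallel the real-axis argument of \cite[Proposition~7.1]{MT}, with the real parameter $u$ there replaced by the complex parameter $w=u-i\theta=-\log z$, and with Karamata's Abelian theorem invoked in its complex (sectorial) form. By (H1$'$) the family $R(z)$ is norm-continuous on $\bar\D$, so (H2) together with standard analytic perturbation theory yields, for $z$ in a neighbourhood of $1$ in $\bar\D$, a spectral decomposition
\[
R(z)=\lambda(z)P(z)+Q(z),\qquad P(z)Q(z)=Q(z)P(z)=0,
\]
with $\lambda(z)$ a simple eigenvalue, $\lambda(z)\to1$, $P(z)\to P$ the associated rank-one projection, and the spectral radius of $Q(z)$ uniformly bounded below $1$. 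Inverting $I-R(z)$ via this decomposition,
\[
T(z)=(1-\lambda(z))^{-1}P(z)+(I-Q(z))^{-1}(I-P(z)),
\]
and since the second term is uniformly bounded for $z$ near $1$, the asymptotics of $T(z)$ reduce to those of $(1-\lambda(z))^{-1}P$.

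Next I isolate the leading behaviour of $1-\lambda(z)$. Writing $R(z)h(z)=\lambda(z)h(z)$ with normalised eigenvector/eigenfunctional pair ($h(1)\equiv 1$, $\pi(1)v=\int_Y v\,d\mu$, $\pi(z)h(z)=1$), and using $R(z)v=R(z^\varphi v)$ together with $\int_Y Rw\,d\mu=\int_Y w\,d\mu$, one obtains
\[
1-\lambda(z)=\int_Y(1-z^\varphi)\,d\mu+E(z)=(1-z)\sum_{n=0}^\infty z^n\mu(\varphi>n)+E(z),
\]
where the error $E(z)$ arises from the $z$-dependence of $h(z)$ and $\pi(z)$ and is of lower order than the main term, exactly as in the real-axis computation in \cite[Proposition~7.1]{MT}.

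Finally, set $z=e^{-w}$ with $w=u-i\theta\to 0$, $\Re w>0$, so that $1-z\sim w$. The complex (sectorial) version of Karamata's Abelian theorem applied to the regularly varying sequence $\mu(\varphi>n)=\ell(n)n^{-\beta}$ gives
\[
\sum_{n=0}^\infty e^{-nw}\ell(n)n^{-\beta}\sim\Gamma(1-\beta)\,w^{\beta-1}\ell(1/|w|),
\]
uniformly for $w\to0$ with $\Re w>0$. Combined with the identity of the previous paragraph this gives $1-\lambda(z)\sim\Gamma(1-\beta)\,w^\beta\ell(1/|w|)$, and substituting into the first-paragraph reduction yields
\[
\Gamma(1-\beta)\,T(z)\sim\ell(1/|u-i\theta|)^{-1}(u-i\theta)^{-\beta}P,
\]
as required.

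The main obstacle is the complex Karamata asymptotic itself: the usual scalar Abelian statement is on the positive real axis, and extending it uniformly to $w\to0$ throughout the open right half-plane (needed because $z\to1$ within $\D$ may be non-tangential) requires the sectorial Tauberian machinery of complex Tauberian theory. Once that is granted, the rest is a direct complexification of the real-axis perturbation analysis already carried out in \cite{MT}.
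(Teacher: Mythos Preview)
Your reduction via the spectral decomposition to the asymptotics of $1-\lambda(z)$ is exactly what the paper does. However, the substance of the paper's proof is precisely the step you black-box as ``the complex (sectorial) version of Karamata's Abelian theorem''. The standard sectorial Abelian theorem (e.g.\ \cite[Thm.~1.7.6]{BGT}) gives the asymptotic
\[
\sum_{n\ge0} e^{-nw}\ell(n)n^{-\beta}\sim \Gamma(1-\beta)\,w^{\beta-1}\ell(1/|w|)
\]
only uniformly in proper subsectors $|\arg w|\le\phi<\pi/2$. The lemma as stated requires $z\to 1$ through all of $\D$, i.e.\ $w\to 0$ through the \emph{full} open right half-plane, and this is not covered by the off-the-shelf Abelian machinery. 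The paper's proof handles this by splitting into the regimes $0\le\theta\le u$ (where the usual substitution and DCT with Potter's bounds suffice) and $0\le u\le\theta$ (where the integral is split at $b/\theta$ and the tail is controlled as an \emph{oscillatory} integral, exploiting monotonicity of the non-oscillating factor). This second regime is the genuine work, and you have not supplied it.

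There is a related issue with your error term $E(z)$. Under (H1$'$) alone one only has $|v(z)-1|_\infty=o(1)$ with no rate, so your claim that $E(z)$ is ``of lower order than the main term'' cannot be read off from the real-axis computation in \cite{MT}; it requires the same full-half-plane estimates. The paper does not separate an additive error $E(z)$ at all: it keeps the eigenfunction perturbation multiplicatively, writing $1-\lambda(z)=(u-i\theta)\int_0^\infty e^{-(u-i\theta)x}g_z(x)(1-G(x))\,dx$ with $|g_z-1|_\infty=o(1)$, and then the case analysis above simultaneously delivers the main term and kills the $g_z-1$ contribution. In the regime $u\le\theta$ this even requires decomposing $g_z-1$ into four nonnegative pieces so that the oscillatory-integral monotonicity argument applies. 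Your outline is correct, but the proof is not complete until this analysis is carried out.
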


Assuming (H1) and the existence of a good remainder in the tail probabilities $\mu(\varphi>n)$, one can also obtain higher order asymptotics (or asymptotics with remainder) of $ T(z)$ (see~\cite{MT} for higher order expansions of $T(e^{i\theta})$). 

\begin{lemma} \label{lem-zest-error} 
Assume (H1) and (H2) and let $\beta\in (0,1)$. Suppose that $\mu(\varphi>n)=c(n^{-\beta}+H(n))$, where $c>0$ 
and $H(n)=O(n^{-2\beta})$.   

\vspace{1ex}
If $\beta\in(\frac12,1)$, set
$c_H=-\Gamma(1-\beta)^{-1}\int_0^\infty H_1(x)\,dx$, where
$H_1(x)=[x]^{-\beta}-x^{-\beta}+H([x])$.
Let $k=\max\{j\ge0:(j+1)\beta-j>0\}$.
Then writing $z=e^{-u+i\theta}$, $u>0$,
\[
c\Gamma(1-\beta)T(z)=\Bigl( (u-i\theta)^{-\beta}+c_H(u-i\theta)^{1-2\beta}
+\dots+c_H^k(u-i\theta)^{k-(k+1)\beta}\Bigr)P+O(1).
 \]

\vspace{1ex}
If $\beta\in(0,\frac12]$, we assume further that $H(n)=b(n)+c(n)$
where $b(n)$ is monotone with $b(n)=O(n^{-2\beta})$ and $c(n)$ is summable.
Then
\[
c\Gamma(1-\beta)T(z)=(u-i\theta)^{-\beta}P+D(z),
\]
where $D(z)=O(1)$ for $\beta<\frac12$ and $D(z)=O(\log\frac{1}{|u-i\theta|})$ if $\beta=\frac12$.
\end{lemma}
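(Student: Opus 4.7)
\begin{pfof}{Lemma~\ref{lem-zest-error}}
The plan is to reduce the operator problem $T(z) = (I - R(z))^{-1}$ to a scalar inversion via spectral perturbation, expand the resulting scalar $1 - \hat\varphi(z)$, where $\hat\varphi(z) := \int_Y z^\varphi\,d\mu$, using the refined tail asymptotics, and then invert by geometric series.

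\emph{Spectral reduction.} A standard consequence of (H1) is that $\|R(z) - R\| = O(|1-z|^\beta)$, and by (H2), $1$ is a simple isolated eigenvalue of $R(1) = R$ with spectral projection $Pv = (\int_Y v\,d\mu)\mathbf{1}$. Analytic perturbation then produces, for $z$ in a neighbourhood of $1$ in $\bar\D$, a simple eigenvalue $\lambda(z)$ with projection $P(z)$ and a complementary spectral part bounded away from $1$; this yields
\[
T(z) = (1-\lambda(z))^{-1} P(z) + B(z), \qquad B(z) = O(1).
\]
Since $R$ preserves $\int_Y\cdot\,d\mu$, first-order perturbation gives $\lambda(z) = \hat\varphi(z) + O(\|R(z)-R\|^2) = \hat\varphi(z) + O(|1-z|^{2\beta})$. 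Using Lemma~\ref{lem-zestimate} to bound $|1-\lambda(z)|^{-1} = O(|u-i\theta|^{-\beta})$, and $\|P(z) - P\| = O(|1-z|^\beta)$, both the substitution $\lambda(z) \leftrightarrow \hat\varphi(z)$ and the replacement $P(z) \leadsto P$ introduce only $O(1)$ errors, so it remains to expand $(1-\hat\varphi(z))^{-1} P$ up to $O(1)$.

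\emph{Scalar expansion and Neumann series $(\beta > \tfrac12)$.} Abel summation yields
\[
1 - \hat\varphi(z) = (1-z)\sum_{n=0}^\infty z^n \mu(\varphi > n).
\]
Substituting $\mu(\varphi>n) = c(n^{-\beta} + H(n))$, the classical polylogarithm expansion $\sum_{n\ge1} z^n n^{-\beta} = \Gamma(1-\beta)(u-i\theta)^{\beta-1} + \zeta(\beta) + O(|u-i\theta|)$, together with absolute summability of $H$ and the estimate $\sum (z^n-1) H(n) = O(|1-z|^{2\beta-1})$ (obtained by splitting the sum at $n \sim 1/|1-z|$), gives
\[
1 - \hat\varphi(z) = c\Gamma(1-\beta)(u-i\theta)^\beta \bigl[1 - c_H(u-i\theta)^{1-\beta} + O(|u-i\theta|^\beta)\bigr],
\]
with $c_H = -\Gamma(1-\beta)^{-1}\int_0^\infty H_1\,dx$ emerging by matching the $(u-i\theta)^1$ coefficient against the regularised sum-minus-integral discrepancy captured by $H_1$. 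Neumann expansion then yields
\[
\frac{c\Gamma(1-\beta)}{1-\hat\varphi(z)} = \sum_{j=0}^\infty c_H^j (u-i\theta)^{j-(j+1)\beta} + O(1).
\]
By definition of $k$, the exponent $j-(j+1)\beta$ is strictly negative precisely for $0 \le j \le k$ and non-negative for $j > k$; the tail of the series is therefore $O(1)$, and combined with the spectral reduction this produces the stated expansion.

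\emph{Case $\beta \le \tfrac12$ and main obstacle.} Here $k = 0$, so only the leading singular term $(u-i\theta)^{-\beta}$ is present, but $\sum |H(n)|$ may diverge. The auxiliary decomposition $H = b + c$ handles this: the summable $c(n)$ contributes $(1-z)\sum z^n c(n) = O(|1-z|)$, which is harmless after inversion. For the monotone part $b(n) = O(n^{-2\beta})$, Karamata-style monotone integral comparison bounds $(1-z)\sum z^n b(n)$ by $|1-z|^{2\beta}$ for $\beta < \tfrac12$ and by $|1-z|\log(1/|1-z|)$ at $\beta = \tfrac12$. Dividing by the leading $(u-i\theta)^\beta$ factor and inverting, these corrections translate to $O(1)$ and $O(\log(1/|u-i\theta|))$ remainders respectively in $c\Gamma(1-\beta)T(z)$, as claimed. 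The main technical obstacle lies precisely here: extracting these error bounds sharply enough (in particular pinning down the logarithmic deficit at the critical exponent $\beta = \tfrac12$) using only the monotone-plus-summable structure of $H$, without the clean identity for $c_H$ available when $\beta > \tfrac12$.
\end{pfof}
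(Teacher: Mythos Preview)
Your strategy coincides with the paper's: use the spectral decomposition~\eqref{exprT} with Proposition~\ref{prop-PQ} and Corollary~\ref{cor-H1} to reduce to the scalar $(1-\lambda(z))^{-1}$, expand $1-\lambda(z)$ to second order, and invert by a geometric series. Your replacement of $\lambda(z)$ by $\hat\varphi(z)=\int_Y z^\varphi\,d\mu$ via second-order Kato perturbation (giving $\lambda(z)-\hat\varphi(z)=O(\|R(z)-R\|^2)=O(|1-z|^{2\beta})$) is a genuine streamlining: the paper instead tracks the eigenfunction $v(z)$ explicitly through Lemma~\ref{lem-lambda}, which costs a case split and a separate oscillatory estimate on the eigenfunction contribution. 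Likewise, your polylogarithm expansion of $\sum z^n n^{-\beta}$ replaces the contour integral in Proposition~\ref{first_cont}; the two are equivalent.

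There is, however, a real issue in the $\beta\le\tfrac12$ step. ``Karamata-style monotone integral comparison'' does not do what you need here: Karamata-type arguments concern real $z\to1^-$, and for complex $z=e^{-u+i\theta}$ with $u\ll|\theta|$ one cannot control $(1-z)\sum z^n b(n)$ by absolute values, since $\sum e^{-un}n^{-2\beta}\sim u^{2\beta-1}$ blows up independently of $\theta$. The correct mechanism---and precisely what the paper exploits in the case~(ii) analysis of Lemma~\ref{lem-lambda}---is oscillatory cancellation via monotonicity of $b$: splitting at $N\sim|1-z|^{-1}$, the head is bounded trivially by $\sum_{n\le N}n^{-2\beta}\ll|1-z|^{2\beta-1}$ (or $\log(1/|1-z|)$ at $\beta=\tfrac12$), while Abel summation with $\bigl|\sum_{k\le n}z^k\bigr|\le 2|1-z|^{-1}$ and $b$ monotone bounds the tail by $b(N)/|1-z|\ll|1-z|^{2\beta-1}$. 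This is the discrete analogue of the paper's Dirichlet-test bound on $\int e^{-\sigma y}\cos\sigma\,b([\sigma/\theta])\,d\sigma$. So the claimed estimate is correct, but the tool is Abel/Dirichlet summation, not Karamata; you should state the argument rather than label it, since as written the key step is not actually carried out.
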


Finally, we state two results about higher order expansions of $T(e^{-u})$
that go beyond the situations covered for $T(z)$ above.
We recall the following definition introduced
by de Haan~\cite{deHaan70} (see also~\cite[Chapter~3]{BGT}):

\begin{defn}(\textbf{\cite{deHaan70}}) A measurable function $f$ on $(0,\infty)$ is in the class $O\Pi_L$ for some slowly varying
function $L$ if for any $\alpha\geq 0$, $|f(\alpha x)-f(x)|=O(L(x))$ as $x\to\infty$.~\end{defn}
For example, if $f(x)=\log^px$, $p\in\R$, then $f\in O\Pi_L$ with $L=\log^{p-1}x$.

\begin{lemma}\label{lemma-HOT-beta0}
Assume (H1) and (H2). 
\begin{itemize}
\item[(a)]
Suppose that $\mu(\varphi>n)=cn^{-\beta}+O(n^{-\gamma})$ where $c>0$, 
$0<\beta<\gamma<1$.  Then 
\[
c\Gamma(1-\beta)T(e^{-u})=\bigl(u^{-\beta}+O(u^{\gamma-2\beta})\bigr)P+O(1),
\enspace u\in[0,1]
\]
\item[(b)] Let $\ell,\hat{\ell}$ be  slowly varying functions such that $\ell(x)\to\infty$ as $x\to\infty$, with $\ell$, $\hat\ell$, $\hat\ell^{-1}$ locally
bounded on $[0,\infty)$, and such that $\ell\in O\Pi_{\hat{\ell}}$.
Suppose that $\mu(\varphi>n)=\ell(n)^{-1}+O(\ell(n)^{-2}\hat\ell(n))$.  Then 
\[
T(e^{-u})=\bigl(\ell(1/u)+O(\hat\ell(1/u)\bigr)P+O(1),\enspace u\in[0,1].
\]
\end{itemize}
\end{lemma}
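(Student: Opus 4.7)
The plan is to prove both parts by an operator Schur-complement analysis of $T(z)=(I-R(z))^{-1}$, parallel to the argument for Lemma~\ref{lem-zest-error}, combined with sharp scalar Tauberian estimates for $\phi(z) = \int_Y z^\varphi\,d\mu$ at $z=e^{-u}$.

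Using (H2), I would decompose $\mathcal{B} = P\mathcal{B} \oplus (I-P)\mathcal{B}$ and write $R(z)$ in $2\times 2$ block form: the $(1,1)$-block is scalar multiplication by $\phi(z)$, the off-diagonal blocks $A(z) = PR(z)(I-P)$ and $B(z) = (I-P)R(z)P$ vanish at $z=1$, and the $(2,2)$-block $C(z) = (I-P)R(z)(I-P)$ tends to $(I-P)R(I-P)$, whose spectrum lies strictly inside the unit disk. Consequently $(I-C(z))^{-1}$ is uniformly bounded for $z$ near $1$. Under (H1), using $1-e^{-un}\ge 0$ for real $z=e^{-u}$, one checks $\|A(z)\|, \|B(z)\| \le C\sum_n\mu(\varphi=n)(1-e^{-un}) = C(1-\phi(e^{-u}))$. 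The Schur complement formula then gives $(T(z))_{11} = [(1-\phi(z))-\eta(z)]^{-1}P$ with $|\eta(z)| = O((1-\phi(e^{-u}))^2)$, so
\[
(T(e^{-u}))_{11} = (1-\phi(e^{-u}))^{-1}P + O(1),
\]
while the remaining blocks of $T(z)$ are $O(1)$ directly.

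The proof then reduces to sharp scalar asymptotics of $1-\phi(e^{-u}) = u\sum_{k\ge 0}e^{-uk}\mu(\varphi>k)$. For part (a), the classical expansion $\sum_{k\ge 1}e^{-uk}k^{-\beta} = \Gamma(1-\beta)u^{\beta-1}+O(1)$ together with $\sum_{k\ge 1}e^{-uk}O(k^{-\gamma}) = O(u^{\gamma-1})$ gives $1-\phi(e^{-u}) = c\Gamma(1-\beta)u^\beta + O(u^\gamma)$, and inverting yields $c\Gamma(1-\beta)(1-\phi(e^{-u}))^{-1} = u^{-\beta}+O(u^{\gamma-2\beta})$. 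For part (b), I would treat $u\sum_k e^{-uk}\ell(k)^{-1}$ by comparing with $\ell(1/u)^{-1}u\sum_k e^{-uk} = \ell(1/u)^{-1}(1+O(u))$ and controlling the difference via $\ell\in O\Pi_{\hat\ell}$: splitting the summation range around $k=1/u$, the de Haan bound gives $|\ell(k)^{-1}-\ell(1/u)^{-1}| = O(\ell(1/u)^{-2}\hat\ell(1/u))$ in the bulk range, while local boundedness of $\ell,\hat\ell,\hat\ell^{-1}$ handles small $k$; the tail error $O(\ell(k)^{-2}\hat\ell(k))$ in the hypothesis contributes at the same order. Combining gives $1-\phi(e^{-u}) = \ell(1/u)^{-1} + O(\ell(1/u)^{-2}\hat\ell(1/u))$, whose reciprocal is $\ell(1/u)+O(\hat\ell(1/u))$.

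The main obstacle will be the sharp scalar estimate in part (b): the standard Karamata Tauberian theorem produces only the leading term $u^{-1}\ell(1/u)^{-1}$, so extracting an error of order $\hat\ell(1/u)/\ell(1/u)^2$ requires an explicit de Haan-type argument that handles head, bulk, and tail contributions separately and invokes the full strength of the $O\Pi_{\hat\ell}$ hypothesis together with the local-boundedness assumptions. Once this scalar input is in place the operator-theoretic reduction proceeds exactly as sketched above and concludes the proof.
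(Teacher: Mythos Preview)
Your approach is correct and leads to the same conclusion, but the operator reduction differs from the paper's. The paper uses analytic perturbation theory: for $z$ near $1$ it follows the perturbed simple eigenvalue $\lambda(z)$ of $R(z)$ with projection $P(z)$, writes $T(z)=(1-\lambda(z))^{-1}P(z)+(I-R(z))^{-1}Q(z)$, and then expands $1-\lambda(e^{-u})$ via $\lambda(u)=\int_Y e^{-u\varphi}\hat v_u\,d\mu$; the eigenfunction $v(z)$ enters through a factor $g_u$ with $|g_u-1|_\infty\ll \ell(1/u)^{-1}$. You instead freeze the projection at $P=P(1)$, block-decompose $I-R(z)$, and invert by Schur complement; the role of the $g_u$-corrections is played by your scalar $\eta(z)=A(z)(I-C(z))^{-1}B(z)=O((1-\phi)^2)$, which is why you land directly on $(1-\phi(e^{-u}))^{-1}P+O(1)$ without ever touching $\lambda(z)$ or $v(z)$. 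Both routes reduce to the same scalar problem, and the paper's treatment of the key integral in part~(b) is exactly the de Haan argument you sketch: after the substitution $\sigma=ux$ one controls $\int_0^\infty e^{-\sigma}\bigl(\ell(1/u)/\ell(\sigma/u)-1\bigr)\,d\sigma$ using Potter's bounds for $\ell$ together with the $O\Pi$ analogue \cite[Theorem~3.8.6]{BGT}, which gives $\hat\ell(1/u)^{-1}|\ell(1/u)-\ell(\sigma/u)|\ll\sigma^\delta+\sigma^{-\delta}$ uniformly, rather than a head/bulk/tail split.

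One small correction: (H2) only gives that $1$ is isolated in $\operatorname{spec}(R)$, not that the rest of the spectrum lies strictly inside the unit disk. What you actually need---and what follows immediately---is that $I-C(1)$ is invertible on $(I-P)\mathcal{B}$, hence $(I-C(z))^{-1}$ is uniformly bounded for $z$ near $1$ by continuity. Your argument goes through unchanged with this weaker statement.
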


\begin{rmk}
The proof of Theorem~\ref{thm-genTZ} uses Proposition~\ref{prop-real-MT}, and the convergence
rates in Theorem~\ref{thm-genT-rem} (and related results) rely on
Lemma~\ref{lemma-HOT-beta0}.    Lemma~\ref{lem-zest-error} is required
for the asymptotic expansion in Theorem~\ref{thm-LSV-HO}.

Lemma~\ref{lem-zestimate} is included because it is clearly an interesting result in 
its own right even though we do not make explicit use of it in this paper.
We note that a proof of Theorem~\ref{thm-genTZ} for $\beta>0$ can be based on Lemma~\ref{lem-zestimate}
via the methods in Section~\ref{sec-higher-order}.   
However, the proof in Section~\ref{sub-uvKth} is more elegant.
Also, under an aperiodicity assumption, 
it is immediate from Lemma~\ref{lem-zestimate} that the Fourier coefficients
of $T(e^{i\theta})$ coincide with the $T_n$, bypassing the tedious calculation
in~\cite[Section~4]{MT}.
\end{rmk}

\section{Uniform dual ergodicity with remainders}
\label{sec-upde-rem}

In this section, we prove uniform dual ergodicity under hypotheses (H1$'$) and (H2). Assuming (H1) and (H2) and imposing further conditions on the tail probabilities, we obtain remainders in the implied convergence.

Recall that $m(n)=\ell(n)$ for $\beta\in[0,1)$, $m(n)=\tilde\ell(n)=\sum_{j=1}^n\ell(j)j^{-1}$
for $\beta=1$, $D_\beta=\Gamma(1-\beta)\Gamma(1+\beta)$ for
$\beta\in(0,1)$ and $D_0=D_1=1$.  The first result of this section reads as

\begin{thm}\label{thm-upde} Assume  (H1$'$) and (H2). Suppose $\mu(\phi>n)=\ell(n)n^{-\beta}$, where $\ell$ is slowly varying and $\beta\in[0,1]$. Then
\[
\lim_{n\to\infty} D_\beta m(n)n^{-\beta}\sum_{j=1}^n L^nv=\int_Yv\,d\mu,
\]
uniformly on $Y$ for all $v\in\mathcal{B}$.
\end{thm}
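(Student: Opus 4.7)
The plan is to carry out the operator version of Karamata's proof of his Tauberian theorem by approximation by polynomials, with the sole analytic input being Proposition~\ref{prop-real-MT}, which furnishes $T(e^{-u}) = c(u) P + o(c(u))$ in operator norm on $\mathcal{B}$, where $c(u) = \Gamma(1-\beta)^{-1} u^{-\beta}/\ell(1/u)$ for $\beta \in [0,1)$ and $c(u) = u^{-1}/\tilde\ell(1/u)$ for $\beta = 1$. Unwinding the definitions of $D_\beta$ and $m(n)$, the conclusion is equivalent to $c(1/n)^{-1} \sum_{j=0}^{n-1} T_j v \to \Gamma(1+\beta)^{-1} Pv$ uniformly on $Y$. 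Since the claim is linear in $v$, and $\mathcal{B}$ contains constants with $|v|_\infty \le \|v\|$, the decomposition $v = (v + |v|_\infty \cdot 1_Y) - |v|_\infty \cdot 1_Y$ reduces matters to $v \ge 0$; positivity of the transfer operator then gives $T_j v \ge 0$ a.e.\ on $Y$ for every $j \ge 0$.

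The next step is a polynomial sandwich of the target function $g(x) := x^{-1} 1_{[e^{-1},1]}(x)$, chosen so that $g(e^{-js})\,e^{-js} = 1_{[0,1/s]}(j)$, hence
\[
\sum_{j \le 1/s} T_j v \;=\; \sum_{j \ge 0} T_j v \cdot g(e^{-js})\,e^{-js}.
\]
For any $\epsilon > 0$ I would produce polynomials $p^\pm(x) = \sum_{k=0}^N a_k^\pm x^k$ with $p^-(x) \le g(x) \le p^+(x)$ on $[0,1]$ and $|I^\pm - \Gamma(1+\beta)^{-1}| < \epsilon$, where $I^\pm := \sum_k a_k^\pm (k+1)^{-\beta}$ (interpreted as $p^\pm(1)$ when $\beta = 0$). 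The construction is standard: sandwich $g$ between continuous $g^-_\delta \le g \le g^+_\delta$ that coincide with $g$ outside a $\delta$-neighborhood of the sole jump at $e^{-1}$, Weierstrass-approximate each $g^\pm_\delta$ uniformly by a polynomial, and enlarge or shrink by $\eta > 0$ to restore the pointwise sandwich. For $\beta \in (0,1]$ the identity $(k+1)^{-\beta} = \int_0^1 x^k (-\log x)^{\beta-1}/\Gamma(\beta)\,dx$ rewrites $I^\pm$ as $\int_0^1 p^\pm(x) (-\log x)^{\beta-1}/\Gamma(\beta)\,dx$, whose difference is controlled by the mass of the $\delta$-neighborhood of $e^{-1}$ plus $O(\eta)$; for $\beta = 0$ the continuity of $g$ at $x = 1$, together with $g(1) = 1 = \Gamma(1)^{-1}$, makes $p^\pm(1) \to 1$ directly.

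Positivity of $T_j v$ combined with the pointwise sandwich then yields
\[
\sum_{k=0}^N a_k^- T(e^{-(k+1)s}) v \;\le\; \sum_{j \le 1/s} T_j v \;\le\; \sum_{k=0}^N a_k^+ T(e^{-(k+1)s}) v
\]
pointwise on $Y$, the outer sums arising by collecting powers in $\sum_{j} T_j v \cdot p^\pm(e^{-js})\,e^{-js}$. Applying Proposition~\ref{prop-real-MT} to each of the finitely many terms, and using slow variation of $\ell$ (respectively $\tilde \ell$) in the form $c((k+1)s)/c(s) \to (k+1)^{-\beta}$, gives
\[
\sum_{k=0}^N a_k^\pm T(e^{-(k+1)s}) v \;=\; c(s)\,I^\pm Pv + o(c(s))
\]
in $\mathcal{B}$-norm as $s \to 0^+$. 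Dividing by $c(s)$ and passing from the $\mathcal{B}$-norm to the $L^\infty$-norm, then letting $s \to 0^+$ and finally $\epsilon \to 0$, produces $c(s)^{-1} \sum_{j \le 1/s} T_j v \to \Gamma(1+\beta)^{-1} Pv$ uniformly on $Y$; the substitution $s = 1/n$ gives the theorem.

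The main obstacle is arranging the polynomial sandwich uniformly across $\beta \in [0,1]$: the test measure $(-\log x)^{\beta-1}/\Gamma(\beta)\,dx$ on $[0,1]$ degenerates to a Dirac mass at $x = 1$ as $\beta \to 0$, so the error budget shifts from an integral bound for $\beta > 0$ to control of a single value $p^\pm(1)$ at $\beta = 0$. Ensuring that $g^\pm_\delta$ equal $g$ near $x = 1$ handles both regimes at once. Everything else, in particular the absorption of the error terms from Proposition~\ref{prop-real-MT} thanks to the finiteness of $N$, is mechanical.
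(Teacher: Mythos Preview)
Your proof is correct and follows essentially the same Karamata approximation-by-polynomials strategy as the paper: the paper packages the argument as Lemma~\ref{lem-unifKara} (applied to the asymptotics from Proposition~\ref{prop-real-MT}), sandwiching the indicator $1_{[e^{-1},1]}$ by polynomials $q$ with $q(0)=0$, whereas you equivalently sandwich $x^{-1}1_{[e^{-1},1]}$ by unrestricted polynomials $p^\pm$ and absorb the factor $x$ into $e^{-js}$---the substitution $q(x)=xp(x)$ makes the two formulations coincide. Your unified treatment of $\beta=0$ via the degeneration of the measure $(-\log x)^{\beta-1}/\Gamma(\beta)\,dx$ to $\delta_1$ is a slightly cleaner packaging than the paper's use of explicit quadratics $-7x^2+8x$ and $2x^2-x$, but the underlying idea is the same.
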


 The proof of Theorem~\ref{thm-upde} is provided in
Subsection~\ref{sub-uvKth}; it relies on a version of  Karamata's Tauberian Theorem that gives uniform
convergence for positive operators. This is the content of  Lemma~\ref{lem-unifKara}. 
In Subsection~\ref{sec-genupde}, we show that Theorem~\ref{thm-upde} extends
to a large class of observables supported on the whole of $X$ and we prove
Theorem~\ref{thm-genTZ}.

The proof of Lemma~\ref{lem-unifKara} combined with arguments used in~\cite[Theorems~3.1 and~3.2, Chapter~VII]{Korevaar} allows us to obtain uniform dual ergodic theorems with remainders.  Our next result provides an example. 

\begin{thm}\label{thm-upde-rem}
Assume  (H1) and (H2).   Let $v\in\mathcal{B}$.
\begin{itemize}
\item[(a)] If $\mu(\varphi>n)=c n^{-\beta}+O(n^{-\gamma})$, where $c>0$,  $\beta\in (0,1)$ and $\gamma>\beta$, then
\[
cD_\beta n^{-\beta}\sum_{j=1}^n 1_YL^jv=  \int_Yv\,d\mu  +E_nv,\quad
\|E_n\|=O(1/\log n).
\]

\item[(b)] 
Let $\ell,\hat{\ell}$ be  slowly varying functions such that $\ell(x)\to\infty$ as $x\to\infty$, with $\ell$, $\hat\ell$, $\hat\ell^{-1}$ locally
bounded on $[0,\infty)$, and such that $\ell\in O\Pi_{\hat{\ell}}$.
If $\mu(\varphi>n)=\ell(n)^{-1}+O(\ell(n)^{-2}\hat\ell(n))$, then 
\[
\sum_{j=1}^n 1_YL^jv=\{\ell(n)+O(\hat{\ell}(n))\}\int_Yv\,d\mu  +E_nv, \quad
\|E_n\|=O(1).
\]
\end{itemize}
\end{thm}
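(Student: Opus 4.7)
The plan is to combine the sharp asymptotic expansions of $T(e^{-u})$ supplied by Lemma~\ref{lemma-HOT-beta0} with a quantitative version of the operator Karamata polynomial-approximation argument underlying Lemma~\ref{lem-unifKara}, following the template of Korevaar's real Tauberian remainder theory~\cite[Chapter~VII, Theorems~3.1 and~3.2]{Korevaar}. Since $v\in\mathcal{B}$ is supported on $Y$, we have $1_YL^jv=T_jv$, so the object of study is the partial-sum operator $S_n=\sum_{j=0}^n T_j$; the discrepancy with the stated sum $\sum_{j=1}^n$ is $T_0v=v$, which is absorbed into the remainder $E_n$ in both (a) and (b).

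First I would record the generating-function identity
\[
\sum_{n=0}^\infty T_nv\,e^{-nu}=T(e^{-u})v,\qquad u>0,
\]
valid in $\mathcal{B}$ by (H1), and rewrite Lemma~\ref{lemma-HOT-beta0}(a) as
\[
\bigl\|c\Gamma(1-\beta)T(e^{-u})-u^{-\beta}P\bigr\|=O(u^{\gamma-2\beta})+O(1),\quad u\in(0,1],
\]
with the analogous formulation from Lemma~\ref{lemma-HOT-beta0}(b) for part (b). The positivity of the operators $T_n$ on $\mathcal{B}$, inherited from the positivity of $R_n$, will be essential for what follows.

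Next, following Karamata's method, I would sandwich the indicator $1_{[e^{-1},1]}$ on $[0,1]$ between polynomials $p_N^\pm$ of degree $N$ whose $L^1$-defect $\int_0^1(p_N^+ - p_N^-)(x)\,\frac{dx}{x}$ is of order $1/N$. Writing $p_N^\pm(x)=\sum_k a_k^\pm x^k$ and setting $x=e^{-u/n}$ with $u=1/n$, positivity of $T_j$ converts the scalar sandwich into the operator inequality
\[
\sum_k a_k^-\,T(e^{-k/n})\le S_n\le \sum_k a_k^+\,T(e^{-k/n}),
\]
in the sense of positive operators on $\mathcal{B}$. Substituting the expansion of $T(e^{-u})$ at $u=k/n$ produces a main term proportional to $n^\beta P$ together with a remainder consisting of the polynomial-approximation error and the Tauberian error inherited from Lemma~\ref{lemma-HOT-beta0}. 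As in~\cite[Theorem~3.2]{Korevaar}, balancing these by choosing $N\asymp\log n$ produces the multiplicative $O(1/\log n)$ error in part (a). In part (b), the slowly-varying main term $\ell(n)$ absorbs the polynomial-approximation error as $O(\hat\ell(n))$, while the uniform additive $O(1)$ term in Lemma~\ref{lemma-HOT-beta0}(b) produces the claimed $O(1)$ remainder operator $E_n$.

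The main obstacle is to execute the polynomial approximation genuinely at the operator level, using positivity to convert one-sided scalar approximations of the step function into two-sided operator inequalities rather than relying on absolute-value bounds which would wash out the gain in $\beta$. Once the positivity-based sandwich is in place, tracking the remainder is bookkeeping modelled on Korevaar's scalar proof; uniformity in $v\in\mathcal{B}$ follows because the expansions in Lemma~\ref{lemma-HOT-beta0} hold in operator norm rather than merely pointwise, so the two-sided inequalities above translate directly into norm estimates on $E_n$.
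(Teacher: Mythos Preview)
Your proposal is correct and matches the paper's approach: the paper simply invokes Lemma~\ref{lemma-HOT-beta0} together with an abstract operator Tauberian remainder lemma (Lemma~\ref{lemma-taub-rem}) whose proof is exactly the quantitative Karamata polynomial sandwich you describe, taking $m\asymp\log n$ in part~(a). Two points to sharpen: the balancing in~(a) hinges on the coefficient-growth bound $\sum_k|b_{km}^\pm|\le C_2^m$ for the approximating polynomials (property~(iii) from~\cite[Theorem~3.4, Chapter~VII]{Korevaar}), which is what makes $m=\delta\log n$ the right choice; and in~(b) the paper uses two \emph{fixed} polynomials $q^\pm$ with $q^\pm(0)=0$, $q^\pm(1)=1$, so that the $O(\hat\ell(n))$ term comes from $\ell(n/k)-\ell(n)=O(\hat\ell(n))$ via the $O\Pi_{\hat\ell}$ hypothesis, not from polynomial-approximation error as you suggest.
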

Theorem~\ref{thm-upde-rem} is proved in Subsection~\ref{subsec-taub-rem}.

\begin{pfof}{Theorem~\ref{thm-genT-rem}}
Choose $Y$ as in Proposition~\ref{prop-tail-PM0} so that
$\mu(\varphi>n)= c\log^{-1}n+O(\log^{-2} n)$.   
Take $\mathcal{B}$ to consist of H\"older or BV functions supported on $Y$ as appropriate.
In the notation of
Theorem~\ref{thm-upde-rem}(b), we have $\ell(n)=c^{-1}\log n$ and $\hat\ell=1$.  
It is well-known that hypotheses (H1) and (H2) are satisfied (see for example~\cite[Section~11]{MT}).    
Hence the result follows from Theorem~\ref{thm-upde-rem}(b).
 \end{pfof}

\subsection{A Karamata Theorem for positive operators}
\label{sub-uvKth}

Let $T(e^{-u})=\sum_{j=0}^\infty T_je^{-uj}$, $u>0$, where 
$T_j:\mathcal{B}\to\mathcal{B}$ are uniformly bounded positive operators.
Let $P:\mathcal{B}\to \mathcal{B}$ be a bounded linear operator.

\begin{prop} \label{prop-q}
Suppose that $T(e^{-u})\sim L(1/u)u^{-\beta}P$ as $u\to0^+$
where $\beta>0$ and $L$ is slowly varying.
Let $q:[0,1]\to\R$ be a polynomial satisfying $q(0)=0$.
Then
\begin{align*}
\Gamma(1+\beta)\sum_{j=0}^\infty T_j q(e^{-uj})\sim L(1/u)u^{-\beta}
\int_0^\infty q(e^{-t})\,dt^\beta \, P \quad\text{as $u\to0^+$},
\end{align*}
where $\sim$ is in the sense of the operator norm for
operators on $\mathcal{B}$.
\end{prop}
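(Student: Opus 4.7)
This proposition is the polynomial case of the operator Karamata theorem, and it should follow from the hypothesis on $T(e^{-u})$ essentially by linearity, without needing the positivity of the $T_j$ (that will come into play only in the subsequent approximation step).

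The plan is first to expand. Since $q(0)=0$ we may write $q(x)=\sum_{k=1}^d c_k x^k$, and then
\[
\sum_{j=0}^\infty T_j\, q(e^{-uj}) \;=\; \sum_{k=1}^d c_k\sum_{j=0}^\infty T_je^{-kuj} \;=\;\sum_{k=1}^d c_k\, T(e^{-ku}),
\]
a \emph{finite} sum of the quantities $T(e^{-v})$ sampled at the rescaled values $v=ku$. Each of these can be estimated via the standing hypothesis: for fixed $k\ge 1$,
\[
T(e^{-ku}) \sim L(1/(ku))\,(ku)^{-\beta}\,P \;=\; k^{-\beta}\, L(1/(ku))\, u^{-\beta}\, P,
\]
and by slow variation $L(1/(ku))/L(1/u)\to 1$ as $u\to 0^+$, so $T(e^{-ku})\sim k^{-\beta} L(1/u)u^{-\beta} P$ in operator norm. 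Because the sum over $k$ has only $d$ terms, the error estimates combine additively, yielding
\[
\sum_{j=0}^\infty T_j\, q(e^{-uj}) \;\sim\; L(1/u)\, u^{-\beta} \Big(\sum_{k=1}^d c_k k^{-\beta}\Big) P
\quad\text{as } u\to 0^+.
\]

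To bring this into the form stated in the proposition, one computes the right-hand side integral by the standard substitution: $\int_0^\infty e^{-kt}\,dt^\beta = \beta\int_0^\infty e^{-kt}t^{\beta-1}\,dt = k^{-\beta}\Gamma(1+\beta)$, so that $\int_0^\infty q(e^{-t})\,dt^\beta = \Gamma(1+\beta)\sum_{k=1}^d c_k k^{-\beta}$. Multiplying the previous display by $\Gamma(1+\beta)$ gives the claim.

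The main obstacle: for polynomials there really is no obstacle — the result is a linear combination of a single hypothesis, and neither positivity nor the uniform boundedness of $T_j$ is used. The substantive difficulty in Karamata's method of approximation by polynomials lies later, in promoting this polynomial identity to general continuous (or Riemann integrable) test functions. That step uses positivity of the $T_j$ to sandwich a general $f$ between polynomial minorants and majorants and to control the approximation error uniformly in $u$, and it is where the real work of the proof of Theorem~\ref{thm-upde} will be concentrated.
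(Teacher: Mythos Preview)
Your proof is correct and is essentially identical to the paper's: both expand $q(x)=\sum_k c_kx^k$, rewrite $\sum_j T_j q(e^{-uj})$ as the finite sum $\sum_k c_k T(e^{-ku})$, apply the hypothesis together with slow variation of $L$ to each term, and identify $\sum_k c_k k^{-\beta}$ with $\Gamma(1+\beta)^{-1}\int_0^\infty q(e^{-t})\,dt^\beta$. Your remark that positivity is not needed at this stage is also accurate and matches the structure of the paper, where positivity enters only in the subsequent sandwiching argument.
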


\begin{proof}
First note that $\int_0^\infty e^{-y}\,dy^\beta=\Gamma(1+\beta)$,
and so for any $k>0$,
\[
T(ku)\sim L(1/u)(ku)^{-\beta}P=(\Gamma(1+\beta))^{-1}L(1/u)u^{-\beta}
\int_0^\infty e^{-kt}\, dt^\beta \, P.
\]
Write $q(x)=\sum_{k=1}^m b_kx^k$.   Then
\begin{align*}
\sum_{j=0}^\infty T_j q(e^{-uj}) & =\sum_{k=1}^m b_k\sum_{j=0}^\infty T_j e^{-ukj}
=\sum_{k=1}^m b_k T(ku) \\
& \sim 
(\Gamma(1+\beta))^{-1}L(1/u)u^{-\beta} \sum_{k=1}^m b_k\int_0^\infty e^{-kt}\, dt^\beta \, P \\ & =
(\Gamma(1+\beta))^{-1}L(1/u)u^{-\beta} \int_0^\infty q(e^{-t})\, dt^\beta \, P.
\end{align*}

\vspace*{-2ex}
\end{proof}

\begin{prop} \label{prop-gq}
Define $g:[0,1]\to[0,1]$, $g=1_{[e^{-1},1]}$.    Let $\epsilon>0$.
Then there is a polynomial $q$ with $q(0)=0$ such that $q\ge g$ on $[0,1]$ and
\[
\int_0^1 (q(x)-g(x))x^{-3/2}\,dx<\epsilon.
\]
\end{prop}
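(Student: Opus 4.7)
The plan is to construct $q$ by first approximating $g$ from above by a continuous piecewise linear function $\phi_\delta$ that vanishes at $0$, then replacing $\phi_\delta$ by a polynomial via the Weierstrass theorem and applying a multiplicative correction that simultaneously enforces $q(0)=0$ and $q\ge g$. For $\delta\in(0,e^{-1}/2)$, define $\phi_\delta\in C([0,1])$ to equal $0$ on $[0,e^{-1}-\delta]$, rise linearly to $1$ on $[e^{-1}-\delta,e^{-1}]$, and equal $1$ on $[e^{-1},1]$. Then $\phi_\delta\ge g$ pointwise, $\phi_\delta(0)=0$, and $\int_0^1(\phi_\delta-g)x^{-3/2}\,dx\le \delta(e^{-1}-\delta)^{-3/2}=O(\delta)$.

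By Weierstrass, given $\eta>0$ there is a polynomial $p$ with $|p-\phi_\delta|_\infty<\eta$. Setting $p_1:=p+\eta$ gives $p_1\ge\phi_\delta\ge g$ with $0\le p_1(0)\le 2\eta$, but typically $p_1(0)\ne 0$. To force the value at $0$ to zero while preserving domination of $g$, I set
\[
q(x)=(1+\eta')\,p_1(x)\,\bigl(1-(1-x)^N\bigr),
\]
where $\eta'>0$ is a small enlargement and $N=N(\eta')$ is chosen large enough that $(1+\eta')\bigl(1-(1-e^{-1})^N\bigr)\ge 1$. Then $q(0)=0$ automatically; $q\ge 0=g$ on $[0,e^{-1})$ since all three factors are nonnegative; and $q\ge 1=g$ on $[e^{-1},1]$ by the choice of $N$.

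I would then estimate $\int_0^1(q-g)x^{-3/2}\,dx$ on the three subregions $[0,e^{-1}-\delta]$, $[e^{-1}-\delta,e^{-1}]$, $[e^{-1},1]$. On the first, the bound $p_1\le 2\eta$ together with Bernoulli's inequality $1-(1-x)^N\le Nx$ gives integrand $\le C\eta N x^{-1/2}$, whose integral is $O(\eta N)$; on the middle the region has length $\delta$ and $q$ is uniformly bounded, giving $O(\delta)$; on the third, $q-1=O(\eta+\eta')$ with bounded weight. Choosing parameters successively---first $\eta'$ small, which fixes $N$, then $\delta$ small, then $\eta$ so small that $\eta N\ll\epsilon$---drives the total below $\epsilon$.

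The main obstacle is the tension between the requirement $q\ge g$, which by continuity forces $q$ to be close to $1$ on both sides of the jump at $e^{-1}$, and the requirement $q(0)=0$ imposed in the presence of the singular weight $x^{-3/2}$. The correction factor $(1-(1-x)^N)$ is the key device: its linear vanishing at the origin precisely tames the $x^{-3/2}$ weight, converting the near-$0$ integrand into an integrable $x^{-1/2}$, while on any compact subinterval of $(0,1]$ it tends to $1$ uniformly as $N\to\infty$, so the polynomial upper approximation of $g$ is essentially unaffected away from the origin.
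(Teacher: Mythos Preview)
Your argument is correct, and the overall strategy matches the paper's: approximate $g$ from above by the continuous piecewise-linear function (your $\phi_\delta$, the paper's $h$), then apply Weierstrass and a device to force $q(0)=0$ while preserving $q\ge g$. The difference lies in that device. The paper applies Weierstrass not to $h$ but to $x^{-1}h(x)+\delta$, obtaining a polynomial $R$ with $|R(x)-x^{-1}h(x)-\delta|<\delta$, and then sets $q(x)=xR(x)$; this immediately gives $q(0)=0$, $q\ge h\ge g$, and the single bound $0\le q(x)-h(x)\le 2\delta x$, so $\int_0^1(q-h)x^{-3/2}\,dx\le 2\delta\int_0^1 x^{-1/2}\,dx=4\delta$. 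Only one parameter $\delta$ is needed. Your route instead multiplies by the correction factor $(1+\eta')(1-(1-x)^N)$, which also vanishes linearly at the origin and hence tames the weight in the same way, at the cost of juggling four parameters $(\delta,\eta,\eta',N)$ whose order of choice you handle correctly. Both methods exploit the same phenomenon---a factor vanishing to first order at $0$ converts $x^{-3/2}$ into the integrable $x^{-1/2}$---but the paper's ``divide by $x$, approximate, multiply back'' is a bit more economical.
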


\begin{proof}  We follow the argument starting at the bottom of page 21
in~\cite{Korevaar}.  Choose $\delta>0$ such that $\delta<(2e)^{-1}$ and
$\delta<((2e)^{3/2}+4)^{-1}\epsilon$.
Let $h:[0,1]\to[0,1]$ be the continuous function that (i) coincides
with $g$ except on the interval $J=[e^{-1}-\delta,e^{-1}]$,
and (ii) is linear on the interval $J$.
By the Weierstrass approximation theorem, there is a polynomial $R$ that 
$\delta$-approximates the continuous function $x^{-1}h(x)+\delta$ on
$[0,1]$.    Then $q(x)=xR(x)$ satisfies $q(0)=0$ and $q(x)\ge h\ge g$.
Also $\int_0^1 (h(x)-g(x))x^{-3/2}\,dx\le (2e)^{3/2}\delta$
and $\int_0^1 (q(x)-h(x))x^{-3/2}\,dx\le 2\delta\int_0^1 x^{-1/2}\,dx= 4\delta$.  The result follows.
\end{proof}

\begin{lemma}   \label{lem-unifKara}
Suppose that $T(e^{-u})\sim L(1/u)u^{-\beta}P$ as $u\to0^+$
where $\beta\ge0$ and $L$ is slowly varying.  Then 
\[
\lim_{n\to\infty}\Gamma(1+\beta)L(n)^{-1}n^{-\beta}\sum_{j=0}^n T_jv = Pv,
\]
uniformly on $Y$ for all $v\in\mathcal{B}$.
\end{lemma}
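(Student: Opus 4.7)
The plan is to adapt Karamata's classical polynomial-approximation argument to the operator setting by using positivity of the $T_j$ to convert sandwich inequalities between polynomials and the indicator $g=1_{[e^{-1},1]}$ into sandwich inequalities for the partial sums $\sum_{j\le n}T_jv$. First I reduce to non-negative $v$: since $\mathcal{B}$ is a vector space containing the constants and $|v|_\infty\le\|v\|$, the decomposition $v=(v+C\cdot 1)-C\cdot 1$ with $C=|v|_\infty$ writes $v$ as a difference of two pointwise non-negative elements of $\mathcal{B}$; by linearity of the claim it suffices to handle $v\ge 0$.

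\textbf{Setting up the sandwich.} Fix $\epsilon>0$ and $n\ge 1$, and set $u=1/n$. The identity $g(e^{-j/n})=1_{\{j\le n\}}$ gives the tautology $\sum_{j=0}^n T_j v=\sum_{j=0}^\infty g(e^{-j/n})T_jv$. Proposition~\ref{prop-gq} yields a polynomial $q$ with $q(0)=0$, $q\ge g$ on $[0,1]$, and $\int_0^1(q-g)x^{-3/2}dx<\epsilon$; inspection of its proof shows moreover that $|q-g|_\infty=O(\delta)$, where $\delta$ is the free parameter in the Weierstrass step. A symmetric construction (Weierstrass-approximating from below a continuous minorant of $g$ obtained by flattening near $x=e^{-1}$) produces a polynomial $p$ with $p(0)=0$, $p\le g$, and analogous estimates. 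Positivity of the $T_j$ together with $v\ge 0$ then gives, pointwise on $Y$,
$$\sum_{j=0}^\infty p(e^{-j/n})\,T_jv \;\le\; \sum_{j=0}^n T_jv \;\le\; \sum_{j=0}^\infty q(e^{-j/n})\,T_jv.$$

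\textbf{Passing to the limit when $\beta>0$.} Apply Proposition~\ref{prop-q} to $q$ and $p$ at $u=1/n$: both sides of the sandwich, after multiplication by $\Gamma(1+\beta)L(n)^{-1}n^{-\beta}$, converge in operator norm---and hence uniformly on $Y$ since $|\cdot|_\infty\le\|\cdot\|$---to $I_\beta(q)Pv$ and $I_\beta(p)Pv$ respectively, where $I_\beta(f):=\int_0^\infty f(e^{-t})\,dt^\beta$. Since $I_\beta(g)=\int_0^1 dt^\beta=1$, closing the sandwich reduces to showing $I_\beta(q)-1$ and $1-I_\beta(p)$ are small. The substitution $x=e^{-t}$ gives $I_\beta(q)-1=\beta\int_0^1(q-g)(x)\,x^{-1}(-\log x)^{\beta-1}dx$; on $(0,1/2]$ the integrand is controlled by a $\beta$-dependent multiple of $(q-g)(x)\,x^{-3/2}$ and hence is $O(\epsilon)$ by Proposition~\ref{prop-gq}, and on $[1/2,1]$ it is bounded by $|q-g|_\infty\int_{1/2}^1 x^{-1}(-\log x)^{\beta-1}dx=O(\delta)$, using that this last integral is finite for $\beta>0$. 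Letting $\delta,\epsilon\to 0$ concludes the $\beta>0$ case.

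\textbf{The case $\beta=0$, and the main obstacle.} When $\beta=0$ the measure $dt^\beta$ degenerates and Proposition~\ref{prop-q} does not apply as stated, but its proof adapts cleanly: for each fixed $k>0$, $L(n)^{-1}T(e^{-k/n})\to P$ by slow variation of $L$, so for any polynomial $q$ with $q(0)=0$ one has $L(n)^{-1}\sum_j q(e^{-j/n})T_j=\sum_k b_k L(n)^{-1}T(e^{-k/n})\to q(1)P$ in operator norm. The relevant approximation property becomes $q(1),p(1)\to g(1)=1$, which is visible from the constructions of $q$ and $p$ above, and the sandwich closes as before. The main obstacle throughout is this closing step: the weighted $L^1$-bound of Proposition~\ref{prop-gq} is by itself insufficient to control $I_\beta(q)-I_\beta(g)$ for $\beta<1$, because the weight $x^{-1}(-\log x)^{\beta-1}$ is not dominated by $x^{-3/2}$ near $x=1$; one must also exploit the uniform bound $|q-g|_\infty=O(\delta)$ that is built into the explicit Weierstrass construction.
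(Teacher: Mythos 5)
Your proposal is correct and follows essentially the same route as the paper: the Karamata sandwich via positivity of the $T_j$, with Proposition~\ref{prop-q} supplying the asymptotics of $\sum_j T_jq(e^{-j/n})$ and Proposition~\ref{prop-gq} (plus its mirror image for a minorant) supplying the approximating polynomials, and the $\beta=0$ case handled separately via $q(1),p(1)\to 1$ (the paper simply uses the explicit quadratics $-7x^2+8x$ and $2x^2-x$ there).

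Two small remarks. First, your careful splitting of $\int_0^1(q-g)\beta(\log x^{-1})^{\beta-1}x^{-1}\,dx$ into $(0,\frac12]$ and $[\frac12,1]$ is actually \emph{more} precise than the paper's one-line domination by $Cx^{-3/2}$, which fails near $x=1$ for $\beta\in(0,1)$ since $(\log x^{-1})^{\beta-1}\to\infty$ there; your use of the sup-norm control on $[\frac12,1]$ is the right patch. Second, your stated global bound $|q-g|_\infty=O(\delta)$ is false as written: on the transition interval $J=[e^{-1}-\delta,e^{-1}]$ the linear interpolant $h$ is close to $1$ while $g=0$, so $|q-g|_\infty$ is of order $1$ there. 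This does not affect your argument, because you only invoke the bound on $[\frac12,1]$, which is disjoint from $J$ and on which $h=g$, so $|q-g|=|q-h|\le 2\delta$ there; but you should state the bound with that restriction.
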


\begin{proof}
First, we consider the case $\beta>0$.

Without loss, we may suppose that $v\ge0$.
Define $g=1_{[e^{-1},1]}$, and given 
$\epsilon>0$ choose $q$ as in Proposition~\ref{prop-gq}.
By positivity of the operators $T_j$ and Proposition~\ref{prop-q}, 
\begin{align*}
\sum_{j=0}^n T_j & =
\sum_{j=0}^\infty  T_j g(e^{-j/n})\le
\sum_{j=0}^\infty  T_j q(e^{-j/n})
\sim (\Gamma(1+\beta))^{-1}L(n)n^\beta\int_0^\infty q(e^{-t})\,dt^\beta \,P,
\end{align*}
as $n\to\infty$.
More precisely, there exists $h_0(n)=o(1)$ as $n\to\infty$ such that for
any $v\in\mathcal{B}$, $v\ge0$, and any $y\in Y$,
\[
\sum_{j=0}^n (T_jv)(y)\le 
(\Gamma(1+\beta))^{-1}L(n)n^\beta\int_0^\infty q(e^{-t})\,dt^\beta \,\Bigl((Pv)(y)+h_0(n)\|v\|\Bigr).
\]
But
\begin{align*}
\int_0^\infty q(e^{-t})\,dt^\beta & =
\int_0^\infty g(e^{-t})\,dt^\beta+ 
\int_0^\infty (q(e^{-t})-g(e^{-t}))\,dt^\beta 
\\ & =1+ \int_0^1 (q(x)-g(x))\beta(\log x^{-1})^{\beta-1}x^{-1}\,dx
\\ & \le 1+ C\int_0^1 (q(x)-g(x))x^{-3/2}\,dx
< 1+C\epsilon,
\end{align*}
where $C$ is a constant depending only on  $\beta$.  Hence,
\[
(\Gamma(1+\beta)L(n)^{-1}n^{-\beta}\sum_{j=0}^n (T_jv)(y)\le (1+C\epsilon)((Pv)(y)+h_0(n)\|v\|).
\]
The reverse inequality is proved in the same way, and 
$\epsilon$ is arbitrary completing the proof for $\beta>0$.

Finally, we indicate the differences when $\beta=0$.
The conclusion of Proposition~\ref{prop-q} is replaced by 
\begin{align*}
\sum_{j=0}^\infty T_j q(e^{-uj})\sim L(1/u)
q(1)\, P \quad\text{as $u\to0^+$}.
\end{align*}
Let $q(x)=-7x^2+8x$.   Then $q(0)=0$, $q(1)=1$, and $q\ge g$ on $[0,1]$.
Hence
\begin{align*}
\sum_{j=0}^n T_j & =
\sum_{j=0}^\infty  T_j g(e^{-j/n})\le
\sum_{j=0}^\infty  T_j q(e^{-j/n})
\sim L(n) q(1) \,P=
L(n) \,P.
\end{align*}
For the reverse direction, take $q(x)=2x^2-x$ so that
$q(0)=0$, $q(1)=1$, and $q\le g$ on $[0,1]$.
\end{proof}

\begin{pfof}{Theorem~\ref{thm-upde}}
This follows immediately from
Proposition~\ref{prop-real-MT} and
Lemma~\ref{lem-unifKara}.
\end{pfof}

\subsection{Uniform dual ergodic theorems for general observables}
\label{sec-genupde}

Theorem~\ref{thm-upde} establishes uniform dual ergodicity for observables 
$v\in\mathcal{B}$.    In particular, it is required that $v$ is supported on $Y$.
However, it is straightforward to generalise~\cite[Theorem~10.2]{MT},
and thereby prove uniform dual ergodicity for a large class
of observables supported on the whole of $X$.

Let $X_k=f^{-k}Y\setminus \cup_{j=0}^{k-1}f^{-j}Y$.  Thus, $z\in X_k$ if and only if $k\geq 0$ is the smallest $k$ such that $f^k z\in Y$ (In particular, $X_0=Y$.).
Given $v\in L^\infty(X)$,  define $v_k=1_{X_k}v$.  Let
$\mathcal{B}(X)$ consist of functions $v\in L^1(X)$ satisfying 
$L^kv_k\in\mathcal{B}$ for each $k\geq 0$.

\begin{thm} \label{thm-genupde}
Assume the set up of Theorem~\ref{thm-upde}.  Let $v\in\mathcal{B}(X)$
and suppose that $\sum\|L^kv_k\|<\infty$.   Then 
\[
\lim_{n\to\infty} D_\beta m(n)n^{-\beta}\sum_{j=1}^n L^nv=\int_Xv\,d\mu, \quad
\text{uniformly on $Y$}.
\]
\end{thm}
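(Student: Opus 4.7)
The plan is to reduce to Theorem~\ref{thm-upde} by decomposing $v=\sum_{k\ge0}v_k$ according to the first-hit-to-$Y$ tower. By construction of $X_k$, points in $X_k$ satisfy $f^ix\notin Y$ for $0\le i<k$ and $f^kx\in Y$, so $L^kv_k$ is supported on $Y$ (and lies in $\mathcal{B}$ by hypothesis), while $1_YL^jv_k=0$ for $j<k$. Consequently, for $n\ge k$,
\[
1_Y\sum_{j=1}^n L^jv_k \;=\; 1_Y\sum_{j=0}^{n-k}L^j(L^kv_k),
\]
and summing over $k$ yields $1_Y\sum_{j=1}^n L^jv=\sum_{k=0}^n 1_Y\sum_{j=0}^{n-k}L^j(L^kv_k)$. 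Write $\alpha_n=D_\beta m(n)n^{-\beta}$; note $\alpha_n\to0$ in all cases and $\alpha_n/\alpha_{n-k}\to1$ for each fixed $k$ by slow variation.

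For each fixed $k$, Theorem~\ref{thm-upde} applied to $L^kv_k\in\mathcal{B}$ gives
\[
\alpha_{n-k}\sum_{j=1}^{n-k}L^j(L^kv_k)\longrightarrow \int_Y L^kv_k\,d\mu
=\int_X v_k\,d\mu
\]
uniformly on $Y$, and the $j=0$ contribution $\alpha_n L^kv_k$ vanishes uniformly since $\alpha_n\to0$ and $L^kv_k\in L^\infty(Y)$. Multiplying by $\alpha_n/\alpha_{n-k}\to1$ shows that, for every fixed $k$, $\alpha_n 1_Y\sum_{j=0}^{n-k}L^j(L^kv_k)\to\int_X v_k\,d\mu$ uniformly on $Y$.

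The main obstacle is interchanging the limit and the infinite sum over $k$, for which I would exploit positivity of $L$ together with the summability $\sum\|L^kv_k\|<\infty$. Since $L^kv_k\in\mathcal{B}\subset L^\infty(Y)$ is supported on $Y$, we have $|L^kv_k|\le c_k\,1_Y$ with $c_k:=\|L^kv_k\|$. Positivity of $L$ gives the pointwise bound
\[
\Bigl|\sum_{j=0}^{n-k}L^j(L^kv_k)\Bigr|\;\le\; c_k\sum_{j=0}^{n}L^j 1_Y.
\]
Applying Theorem~\ref{thm-upde} to $1_Y\in\mathcal{B}$ shows that $\alpha_n\sum_{j=0}^{n}L^j1_Y$ converges to $1$ uniformly on $Y$, hence is bounded by some constant $M$ on $Y$ uniformly in $n$. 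Therefore
\[
\Bigl\|\,\alpha_n\, 1_Y\sum_{j=0}^{n-k}L^j(L^kv_k)\,\Bigr\|_{L^\infty(Y)}\;\le\; Mc_k
\]
uniformly in $n\ge k$. Moreover $|\int_X v_k\,d\mu|=|\int_X L^kv_k\,d\mu|\le\mu(Y)|L^kv_k|_\infty\le c_k$ since $\mu(Y)=1$.

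Putting these together in the usual way: given $\epsilon>0$, choose $K$ so large that $(M+1)\sum_{k\ge K}c_k<\epsilon/2$, then split
\[
\alpha_n 1_Y\sum_{j=1}^n L^jv-\int_X v\,d\mu
\;=\;\sum_{k=0}^{K-1}\!\Bigl(\alpha_n 1_Y\!\!\sum_{j=0}^{n-k}\!L^j(L^kv_k)-\!\int_X\! v_k\,d\mu\Bigr)
+\sum_{k=K}^{n}\!\alpha_n 1_Y\!\!\sum_{j=0}^{n-k}\!L^j(L^kv_k)-\!\sum_{k=K}^\infty\!\int_X\! v_k\,d\mu,
\]
bound the last two sums uniformly on $Y$ by $\epsilon/2$ using the previous paragraph, and apply the per-$k$ uniform convergence to the finite sum to make it $<\epsilon/2$ uniformly on $Y$ for $n$ large. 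This gives the desired uniform convergence and completes the proof.
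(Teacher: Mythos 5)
Your proof is correct and follows essentially the same route as the paper's: decompose $v=\sum_k v_k$ over the first-hitting partition $X_k$, observe that on $Y$ the sum $\sum_j L^jv$ reduces to $\sum_k\sum_j T_j(L^kv_k)$, apply Theorem~\ref{thm-upde} to each $L^kv_k\in\mathcal{B}$, and use the summability of $\|L^kv_k\|$ to control the tail. The only difference is technical: where the paper justifies the interchange of limit and sum via the operator-norm remainder $\sum_{j\le m}T_j=a_mP+D_m$ with $\|D_m\|=o(a_m)$, you use positivity of $L$ and domination by $\alpha_n\sum_{j\le n}L^j1_Y$ (uniformly bounded by the theorem applied to the constant function), which is an equally valid, and arguably more carefully justified, way to carry out the same step.
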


\begin{proof}
Set $a_n=D_\beta^{-1} m(n)^{-1} n^\beta$.
By Theorem~\ref{thm-upde}, $\sum_{j=0}^n T_j=a_nP+D_n$, where $\|D_n\|=o(a_n)$.  
On $Y$,
\[
\sum_{j=0}^n L^j v=\sum_{j=0}^n \sum_{\ell=0}^j T_{j-\ell}L^\ell v_\ell=\sum_{\ell=0}^n \sum_{j=0}^{n-\ell}T_j L^\ell v_\ell.
\]
Thus,
\begin{align*}
\Bigl\|a_n^{-1}\sum_{j=0}^n L^j v-\int_X v\,d\mu\Bigr\| & \ll \Bigl\|a_n^{-1}\sum_{\ell=0}^n a_{n-\ell}\int_X L^\ell v_\ell\,d\mu-\sum_{\ell=0}^n\int_X v_\ell\,d\mu\Bigr\| \\
& \qquad\qquad +a_n^{-1}\sum_{\ell=0}^n\|D_{n-\ell}\|\|L^\ell v_\ell\|+ \sum_{\ell>n}\int_X |v_\ell|\,d\mu
\end{align*}
\begin{align*}
& =\sum_{\ell=0}^n \Big(\frac{a_{n-\ell}}{a_n}-1\Big)\int_X v_\ell\,d\mu+\sum_{\ell=0}^n a_n^{-1}\|D_{n-\ell}\|\|L^\ell v_\ell\|+ \sum_{\ell>n}\int_X |v_\ell|\,d\mu.
\end{align*}
Clearly, the last term converges to zero. Moreover, $\lim_{n\to\infty}(a_n^{-1}a_{n-\ell}-1)=0$, for each~$\ell$. Since we also know that  $\int_X v_\ell\,d\mu$ is summable, the first term
converges to zero. 
Finally, $\lim_{n\to\infty}a_n^{-1}\|D_{n-\ell}\|=0$, for each $\ell$. This together with the summability of  
$\|L^\ell v_\ell\|$ shows that the second term converges to
zero, ending the proof.
\end{proof}

\begin{pfof}{Theorem~\ref{thm-genTZ}} 
We modify the proof of~\cite[Theorem 11.5]{MT}. Without loss, choose $Y$ such that $f(Y)=X$. Suppose first that $v:X\to\R$ is
$\mu$-integrable and BV.
As noted in~\cite[Section 11.3]{MT}, the space $\mathcal{B}=BV(Y)$ is a suitable Banach space.
In particular, hypotheses (H1) and (H2) are satisfied and moreover
\mbox{$\sum \|L^k v_k\|<\infty$}.
Hence the hypotheses of Theorem~\ref{thm-genupde} are satisfied
and we obtain a uniform dual ergodic theorem for such $v$.
Finally, given $v$ of the required form $v=\xi u$, we approximate $u$ from above and below by BV functions $u^\pm$. Then $v$ is approximated from above and below by observables $v^\pm=\xi u^\pm$ for which uniform convergence on compact sets of $X'$ holds.  But $\int (v^+-v^-) $ can be made arbitrarily small and the result follows.~
\end{pfof}

\subsection{Remainders in the uniform convergence}
\label{subsec-taub-rem}

The proof of Theorem~\ref{thm-upde-rem} relies on the following Tauberian remainder theorem for positive operators.
Let $T(e^{-u})$, $P$ be bounded linear operators on $\mathcal{B}$,
$u\in(0,1)$.

\begin{lemma}\label{lemma-taub-rem}
\noindent(a) Let $\beta>0$, $\hat\beta<\beta$.
Suppose that $T(e^{-u})=u^{-\beta}P+D(u)$ as $u\to0^+$, where 
$\|D(u)\|=O(u^{-\hat\beta})$. Then
\[
\Gamma(1+\beta)\sum_{j=0}^n T_j= n^\beta P +E_n,
\quad \|E_n\|=O(n^\beta/\log n).
\]  

\noindent(b)  
Let $\ell,\hat{\ell}$ be  slowly varying functions such that $\ell(x)\to\infty$ as $x\to\infty$ and $\ell\in O\Pi_{\hat{\ell}}$.
Suppose that $T(e^{-u})= \{\ell(1/u)+O(\hat\ell(1/u))\}P+D(u)$ as $u\to0^+$,  
where $\|D(u)\|=O(1)$.  Then 
\[
\sum_{j=0}^n T_j= (\ell(n)+O(\hat\ell(1/n))P +E_n,
\quad \|E_n\|=O(1).
\] 
\end{lemma}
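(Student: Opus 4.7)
The plan is to adapt the polynomial-approximation proof of Lemma 3.7 to the quantitative setting, by tracking the error terms that propagate through each step. The starting point is the same polynomial identity: for any polynomial $q(x)=\sum_{k=1}^N b_k x^k$ with $q(0)=0$ and for $u=1/n$,
\[
\sum_{j=0}^\infty T_j\, q(e^{-j/n}) \;=\; \sum_{k=1}^N b_k\, T(e^{-k/n}).
\]
Under hypothesis (a), inserting $T(e^{-k/n}) = (k/n)^{-\beta}P+D(k/n)$ with $\|D(k/n)\|\ll (k/n)^{-\hat\beta}$ and using $\sum_k b_k k^{-\beta}=\Gamma(1+\beta)^{-1}\int_0^\infty q(e^{-t})\,dt^\beta$, one obtains
\[
\sum_{j=0}^\infty T_j\, q(e^{-j/n}) \;=\; \frac{n^\beta}{\Gamma(1+\beta)}\Big(\int_0^\infty q(e^{-t})\,dt^\beta\Big)P \;+\; E_n(q),
\]
with $\|E_n(q)\|\ll n^{\hat\beta}\,M(q)$, where $M(q)=\sum_{k=1}^N |b_k|\,k^{-\hat\beta}$.

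Next, since $g=1_{[e^{-1},1]}$ satisfies $g(e^{-j/n})=1_{\{j\le n\}}$, positivity of the $T_j$ yields the sandwich
\[
\sum_j T_j\, \tilde q(e^{-j/n}) \;\le\; \sum_{j=0}^n T_j \;\le\; \sum_j T_j\, q(e^{-j/n})
\]
for any polynomials with $\tilde q\le g\le q$ and $q(0)=\tilde q(0)=0$. Applied in both directions, the identity above gives $\Gamma(1+\beta)\sum_{j=0}^n T_j = n^\beta P + E_n$, where the error depends on two quantities that trade off against each other: the weighted approximation defect $\delta(q):=|\!\int_0^\infty (q(e^{-t})-g(e^{-t}))\,dt^\beta|$ (contributing $O(n^\beta\,\delta(q))$) and the coefficient-norm $M(q)$ (contributing $O(n^{\hat\beta}M(q))$).

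The key remaining step is to construct, for each target defect $\delta>0$, polynomials $q_\delta, \tilde q_\delta$ that sandwich $g$ with $\delta(q_\delta),\delta(\tilde q_\delta)\le \delta$ and with $M(q_\delta)\le e^{C/\delta}$ for a constant depending on $\beta,\hat\beta$. This is the classical Chebyshev-type construction underlying the Tauberian remainder theorems in \cite[Chapter~VII, Theorems~3.1 and~3.2]{Korevaar}, refining the Weierstrass-based Proposition~\ref{prop-gq}. Once such polynomials are available, one optimises by setting $\delta=c/\log n$ with $c$ small enough that $e^{C/\delta}\le n^{\beta-\hat\beta}$, balancing the two error contributions to $O(n^\beta/\log n)$. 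The most delicate point is keeping $M(q_\delta)$ only exponential (rather than super-exponential) in $1/\delta$ while forcing the weighted defect to be $O(\delta)$ despite the jump of $g$; this requires handling the singular weight $t^{\beta-1}$ (equivalently $x^{-1}(\log 1/x)^{\beta-1}$) near $x=1$ and choosing the location of the smoothing interval with care.

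Part (b) proceeds along the same lines, with $u^{-\beta}$ replaced by the slowly varying function $\ell(1/u)$. The hypothesis $\ell\in O\Pi_{\hat\ell}$ is used precisely to control $\ell(n/k)=\ell(n)+O(\hat\ell(n))$ uniformly for $k$ in the (polynomially growing) range of summation, so that the polynomial Tauberian identity produces $q(1)\ell(n)P+O(\hat\ell(n)M'(q))+O(M''(q))$, where $M'(q),M''(q)$ depend only on $q$. Since no $n^\beta$ scaling is present, no logarithmic loss arises: a single $\delta$-approximation with $\delta$ independent of $n$ (i.e., a fixed polynomial) already yields the advertised remainder $\|E_n\|=O(1)$ in the sum, with the $O(\hat\ell(n))$ absorbed into the leading coefficient. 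The main obstacle, as in (a), is the quantitative polynomial approximation of $g$ with controlled coefficient sums; everything else is bookkeeping.
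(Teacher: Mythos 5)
Your proposal is correct and follows essentially the same route as the paper: the polynomial sandwich identity from the proof of Lemma~\ref{lem-unifKara}, combined with the quantitative approximation of $g=1_{[e^{-1},1]}$ by polynomials with defect $O(1/m)$ and coefficient sum $O(C^m)$ (the paper invokes exactly this via \cite[Theorem~3.4, Chapter~VII]{Korevaar}), followed by the choice $m=\delta\log n$ in part (a) and fixed polynomials plus the $O\Pi_{\hat\ell}$ control of $\ell(n/k)-\ell(n)$ in part (b).
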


\begin{proof} We mimic the arguments used in the proof of~\cite[Theorem~3.1, Chapter~VII]{Korevaar}. Define $g:[0,1]\to[0,1]$, $g=1_{[e^{-1},1]}$.  By~\cite[Theorem~3.4, Chapter~VII]{Korevaar} (see also~\cite{Korevaar54a, Korevaar54b}),  there exist 
constants $m_0\ge1$, $C_1>0$, $C_2>1$, and for every $m\geq m_0$ there exist real polynomials 
$q^j_m(x)=\sum_{k=1}^m b^j_{km} x^k$, $j=1,2$, such that
\begin{itemize}
\item[(i)] $q^1_m(x)\leq g(x)\leq q^2_m(x)$, $0\leq x\leq 1$.
\item[(ii)] $\int_0^\infty (q^2_m(x)-q^1_m(x))\,dt^\beta\leq C_1/m$.
\item[(iii)] $\sum_{k=1}^m|b^j_{km}|\leq C_2^m$ for $j=1,2$.
\end{itemize}

Proceeding as in the proof of Lemma~\ref{lem-unifKara},  in case $(a)$, we have
\begin{align*}
\sum_{j=0}^nT_j &=\sum_{j=0}^\infty T_jg(e^{-j/n})
\leq \sum_{j=0}^\infty T_jq^2_m(e^{-j/n})
=\sum_{k=1}^m b^2_{km} T(k/n)
\\ & =n^\beta\sum_{k=1}^m b^2_{km}k^{-\beta}P+\sum_{k=1}^m b^2_{km}D(k/n).
\end{align*}
Moreover,
\begin{align*}
\Gamma(1+\beta)\sum_{k=1}^m b^2_{km}k^{-\beta} & = 
\sum_{k=1}^m b^2_{km}{\SMALL\int}_0^\infty e^{-kt}\,dt^\beta
 = {\SMALL\int}_0^\infty q^2_m(e^{-t})\,dt^\beta  \\
 & = 1+{\SMALL\int}_0^\infty( q^2_m(e^{-t})-g(e^{-t}))\,dt^\beta.  
\end{align*}
Hence
\begin{align*}
\Gamma(1+\beta)n^{-\beta}\sum_{j=0}^n T_j
& \le P+ {\SMALL\int}_0^\infty ( q^2_m(e^{-t})-g(e^{-t}))\,dt^\beta P 
+\Gamma(1+\beta)n^{-\beta}\sum_{k=1}^m b^2_{km}D(k/m).
\end{align*}
By property (i) and (ii) above, 
$\int_0^\infty ( q^2_m(e^{-t})-g(e^{-t}))\,dt^\beta\le C_1/m$.
By property (iii) and the hypothesis on $D(u)$, 
\[
\Bigl\|\sum_{k=1}^m b^2_{km}D(k/m)\Bigr\|\ll n^{\gamma}\sum_{k=1}^m k^{-\gamma}|b^2_{km}|
\le 
n^{\gamma}\sum_{k=1}^m |b^2_{km}|\le C_2^m n^{\gamma}.
\]
Hence, taking $m=\delta \log n$ with $\delta$ sufficiently small, we have
$\Gamma(1+\beta)n^{-\beta}\sum_{j=0}^n T_j-P\le H_n$ where
$\|H_n\|\ll 1/\log n$.
Repeating the argument with $q^1_m$ instead of $q^2_m$, we obtain the inequality in the reverse direction. 

In case $(b)$, we use two fixed polynomials $q^j(x)=\sum_{k=1}^m b_k^jx^k$, $j=1,2$
as in the proof of Lemma~3.5, so that $q^1\le g\le q^2$ in $[0,1]$
and $q^j(0)=0$, $q^j(1)=1$.
Write $T(e^{-u})= \{\ell(1/u)+h(u)\}P+D(u)$ as $u\to0^+$,  
where $h(u)=O(\hat\ell(1/u))$.
Then
\begin{align*}
\sum_{j=0}^nT_j & \leq  \sum_{k=1}^m b^2_k \ell(n/k)P
+\sum_{k=1}^m b^2_k h(k/n)P 
+\sum_{k=1}^m b^2_k D(k/n) \\
& = \ell(n) \sum_{k=1}^m b^2_k P + 
\sum_{k=1}^m b^2_k(\ell(n/k)-\ell(n))P
+\sum_{k=1}^m b^2_kh(k/n)P + H_n 
\\ &  = \ell(n)P + O(\hat\ell(n))P + H_n,
\end{align*}
where $\|H_n\|=O(1)$.
The reverse inequality is obtained by repeating the argument with $q^1$ instead of $q^2$.
\end{proof}

\begin{pfof}{Theorem~\ref{thm-upde-rem}}
This is immediate from 
Lemma~\ref{lemma-HOT-beta0} and Lemma~\ref{lemma-taub-rem}.
\end{pfof}

\begin{rmk}   \label{rmk-real}
As mentioned in the introduction, it seems unlikely that the apparently
weak estimate in Theorem~\ref{thm-upde-rem}(a) 
can be improved using methods from real Tauberian theory.    We explain this now
within the context of the classical scalar theory.

The method of approximation by polynomials links information about the asymptotics of real power series $\Phi(s)=\sum_{n=0}^\infty u_n e^{-ns}$ as $s\to 0^+$, to asymptotics of the partial sums $\sum_{j=0}^n u_j$ as $n\to\infty$ (see Korevaar~\cite[Chapters~I and~VII]{Korevaar}). 
Freud~\cite{Freud51} refined Karamata's method and obtained error estimates
of the type described in Theorem~\ref{thm-upde-rem}(a).   Within the
context of real power series, the Freud remainder terms are optimal.
For example, suppose that the coefficients $u_n$ are non-negative and
$\Phi(s)=s^{-\beta}+O(s^{-\gamma})$ as $s\to 0^+$, where $\beta>0$ and $\gamma<\beta$. By Ingham~\cite{Ingham65} and Korevaar~\cite{Korevaar54a, Korevaar54b}, the best possible result for the asymptotics of the partial sums
 is $\sum_{j=0}^n u_j=n^{\beta}+O(n^\beta/\log n)$  as $n\to\infty$.

More generally, Tauberian theorems relate the 
asymptotics of a nondecreasing function $U(x)$ as $x\to\infty$ with the
asymptotics of the
Laplace transform $\hat{U}(s)=\int_0^\infty e^{-sx}\, dU(x)$ as $s\to0^+$.
Improved remainder theorems can be obtained by 
imposing further conditions on $U(x)$
  (see for instance~\cite{Aljancic, Geluk, GelukdeHaan, deHaan,  Jordan, Omey85};  see also~\cite[Chapter~3]{BGT} and references therein).  
However, such conditions on $U$ are not appropriate in the context of dynamical systems.
\end{rmk}

\section{Higher order asymptotics }\label{sec-higher-order}

In this section, we prove a result on higher order asymptotics in the uniform dual
ergodic theorem.
Throughout,  we assume (H1) and (H2), and that $\mu(\varphi>n)=c(n^{-\beta}+H(n))$, where $\beta\in(0,1)$, $c>0$ 
and $H(n)=O(n^{-2\beta})$. 
If $\beta\in(0,\frac12]$, we assume further that $H(n)=b(n)+c(n)$
where $b(n)$ is monotone with $b(n)=O(n^{-2\beta})$ and $c(n)$ is summable.

%In what follows, it is convenient to work with the notation $\hat{S}_n := \sum_{j=0}^n T_j $.  
% In the remaining statements, the observable $v$ is not mentioned.
% Here, we are speaking of convergence of operators on the Banach
% space $\mathcal{B}$. Since $\mathcal{B}$ is embedded in
% $L^\infty(Y)$, this immediately implies uniform convergence on $Y$.

 Recall  $H_1(x)=[x]^{-\beta}-x^{-\beta}+H([x])$.
For $\beta\in(\frac12,1)$,  
$c_H=-\Gamma(1-\beta)^{-1}\int_0^\infty H_1(x)\,dx$ and we set
$d_k= c_H^k/\Gamma\bigl((k+1)\beta-(k-1)\bigr)$ for $k\geq 0$.

\begin{thm}\label{beta>half}
Let $k=\max\{j\ge0:(j+1)\beta-j>0\}$. Then for any $\epsilon>0$,
\[c\Gamma(1-\beta)\sum_{j=0}^{n-1} T_j = (d_0n^{\beta} +d_1 n^{2\beta-1}+d_2 n^{3\beta-2}+\cdots
+d_k n^{(k+1)\beta-k})P +O(n^{\epsilon}).\] 
\end{thm}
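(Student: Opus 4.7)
The plan is to derive Theorem~\ref{beta>half} as an operator-valued instance of Theorem~\ref{thm-taub}, using Lemma~\ref{lem-zest-error} as input. Writing $z=e^{-u+i\theta}$, Lemma~\ref{lem-zest-error} supplies
\[
c\,\Gamma(1-\beta)\,T(z)
= \sum_{r=0}^{k} c_H^{\,r}\,(u-i\theta)^{-\gamma_r}\,P + E(z),
\qquad \gamma_r:=(r+1)\beta-r,
\]
where $E(z)=O(1)$ in operator norm for $\beta\neq\tfrac12$ and $E(z)=O(\log\tfrac{1}{|u-i\theta|})$ at $\beta=\tfrac12$; in the subcritical range $\beta<\tfrac12$ only the $r=0$ term appears, but the definition of $k$ then forces $k=0$, so the formula still agrees with the theorem statement. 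The operator $T(z)$ therefore satisfies a direct analogue of the hypothesis in Theorem~\ref{thm-taub}, with operator coefficients $A_r = c_H^{\,r}P/(c\,\Gamma(1-\beta))$ in place of scalars.

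The key step is to lift Theorem~\ref{thm-taub} from the scalar to the operator-valued category: if $\Phi(z)=\sum_j U_j z^j$ is a power series of uniformly norm-bounded operators satisfying $\Phi(z)=\sum_r A_r(u-i\theta)^{-\gamma_r}+O(1)$ in operator norm, then
\[
\sum_{j=0}^{n-1} U_j
= \sum_{r}\frac{A_r}{\Gamma(1+\gamma_r)}\,n^{\gamma_r} + O(n^\epsilon)
\]
in operator norm. The plan is to transcribe the scalar argument of Theorem~\ref{thm-taub} verbatim. The starting identity
\[
\sum_{j=0}^{n-1} U_j
= \frac{1}{2\pi i}\oint_{|z|=\rho}\frac{\Phi(z)}{1-z}\,z^{-n}\,dz,
\qquad \rho<1,
\]
holds in the Banach-space-valued category, and the subsequent contour deformation manipulates only the scalar factor $z^{-n}/(1-z)$, treating each $A_r$ as a fixed operator. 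The explicit singular integrals are computed in Appendix~\ref{app-cont-int} and yield the factors $n^{\gamma_r}/\Gamma(1+\gamma_r)$, while the remainder integrand is controlled in norm by $\|E(z)\|/|1-z|\cdot |z|^{-n}$, so the usual Perron-type choice of contour at distance $\sim 1/n$ from $z=1$ converts $O(1)$ input into an $O(n^\epsilon)$ output.

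With the operator Tauberian theorem in hand, the proof of Theorem~\ref{beta>half} is immediate: apply it with $A_r = c_H^{\,r}P/(c\,\Gamma(1-\beta))$ and $\gamma_r=(r+1)\beta-r$, use $1+\gamma_r=(r+1)\beta-(r-1)$ to identify $A_r/\Gamma(1+\gamma_r) = d_r\,P/(c\,\Gamma(1-\beta))$, and multiply through by $c\,\Gamma(1-\beta)$.

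The main obstacle is checking that the scalar complex Tauberian machinery transfers to the operator setting without change beyond bookkeeping. The delicate points are those where one passes from pointwise control of the integrand to control of the integral: they need the operator-norm triangle inequality rather than the scalar modulus, but that suffices. A secondary issue is the $\beta=\tfrac12$ boundary, where the logarithmic rather than $O(1)$ error in Lemma~\ref{lem-zest-error} lies just outside the stated hypothesis of Theorem~\ref{thm-taub}; however, an $O(\log)$ hypothesis is still compatible with an $O(n^\epsilon)$ conclusion, so this will be handled by a slight widening of the Tauberian statement rather than by any new idea.
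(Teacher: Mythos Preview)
Your high-level plan---reduce to an operator version of Theorem~\ref{thm-taub} via Lemma~\ref{lem-zest-error}, and argue that the scalar proof transfers under the operator norm---is exactly right, and indeed Remark~\ref{rmk-compl-taub} confirms that the paper regards the scalar and operator statements as formally identical. The secondary issues you flag (identifying $d_r$, absorbing the logarithmic loss at $\beta=\tfrac12$ into $O(n^\epsilon)$) are correctly handled.

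The gap is in your description of how the complex Tauberian theorem is actually proved. You propose starting from the Cauchy identity
\[
\sum_{j=0}^{n-1}U_j=\frac{1}{2\pi i}\oint_{|z|=\rho}\frac{\Phi(z)}{1-z}\,z^{-n}\,dz
\]
and then performing a ``Perron-type'' contour deformation. This cannot succeed as stated: the hypothesis controls $\Phi(z)$ only as $z\to1$, while on the rest of the circle $|z|=e^{-1/n}$ the sole information is $\|U_j\|=O(1)$, which gives merely $\|\Phi(z)\|\ll n$. The contribution of the arc $|\theta|\ge n^{-\gamma}$ to your Cauchy integral is then of size $n\!\int_{n^{-\gamma}}^\pi|\theta|^{-1}\,d\theta\asymp n\log n$, swamping the target $O(n^\epsilon)$. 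There is no analytic continuation past $|z|=1$ available, so genuine contour deformation is not an option either.

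What the paper does instead is the Subhankulov--Korevaar localisation device (Lemma~\ref{prop-Kor}/Corollary~\ref{part-cor}): one multiplies the integrand by the polynomial $(e^{i\theta}-e^{i\alpha})^p(e^{i\theta}-e^{-i\alpha})^p$ with $p=2$, $\alpha=n^{-\gamma}$, which produces an exact identity expressing $A^2(n)\sum_{j=0}^{n-4}T_j$ as an integral over the short arc $[-\alpha,\alpha]$ plus a remainder $B$ controlled by $\sum_j \|T_j\|\alpha^4/(\alpha^2|j-n|^2+1)$. The bound $\|T_j\|=O(1)$ (itself derived from Lemma~\ref{lem-zestimate}) then makes $B=O(n^{-3\gamma})$, and the short-arc integral is where Lemma~\ref{lem-zest-error} and the contour computations of Appendix~\ref{app-cont-int} are actually deployed. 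Your proposal is missing precisely this localisation mechanism; once you supply it, the rest of your plan goes through.
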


\begin{pfof}{Theorem~\ref{thm-LSV-HO}}
Choose $Y$ as in Proposition~\ref{prop-tail-PM}.
Take $\mathcal{B}$ to consist of H\"older or BV functions supported on $Y$ as appropriate.
Again, it is well-known that hypotheses (H1) and (H2) are satisfied~\cite[Section~11]{MT}.
Hence the result follows from Theorem~\ref{beta>half} and Proposition~\ref{prop-tail-PM}.
\end{pfof}

In the remainder of this section, we prove Theorem~\ref{beta>half}.
The following result can be found in Korevaar~\cite[Proposition 16.1, Chapter III]{Korevaar},
see also~\cite{Subhankulov60}.

\begin{lemma}
Let $p\in\N$ fixed and let $0<r<1$. Then for all $\alpha\in(0,1]$ with $1-r\leq \alpha/4$ and for $m\in\Z$, the following holds:
 \begin{align}\nonumber
&\int_{-\alpha}^{\alpha}\frac{(e^{i\theta}-e^{i\alpha})^p(e^{i\theta}-e^{-i\alpha})^p }{1-re^{i\theta}}e^{im\theta}\, d\theta\\ &=
\begin{cases} O\Big(\frac{\alpha^{2p}}{\alpha^{p}|m|^p+1}\Big), & m\geq -(2p-1)\\
2\pi  r^{-(2p+m)}(1-2r\cos\alpha+r^2)^p +  O\Big(\frac{\alpha^{2p}}{\alpha^{p}|m|^p+1}\Big),  & m\leq -2p
\end{cases}
\end{align} 
where the implied constants are independent of $r,\alpha,m$.
\label{prop-Kor}
\end{lemma}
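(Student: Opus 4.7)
The plan is to view $I$ as a contour integral on the unit circle. Substituting $z = e^{i\theta}$, we obtain $I = \tfrac{1}{i}\int_{C_\alpha} P(z)\,z^{m-1}/(1-rz)\,dz$, where $P(z) = (z-e^{i\alpha})^p(z-e^{-i\alpha})^p = (z^2 - 2z\cos\alpha + 1)^p$ and $C_\alpha$ is the short arc of $\{|z|=1\}$ from $e^{-i\alpha}$ to $e^{i\alpha}$ through $z = 1$. The integrand has a pole at $z = 1/r$ outside the unit disk (harmless) and, when $m \leq 0$, a pole of order $1 - m$ at $z = 0$. The two cases in the lemma correspond to different contour closures.

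For Case 1 ($m \geq -(2p-1)$), I would close $C_\alpha$ via the straight chord $L$ from $e^{i\alpha}$ back to $e^{-i\alpha}$. Since $z = 0$ lies well outside the small lens enclosed between $L$ and $C_\alpha$, Cauchy's theorem gives $I = \tfrac{1}{i}\int_L P(z)z^{m-1}/(1-rz)\,dz$ with no residue contribution. Parametrizing the chord as $z(s) = \cos\alpha + is\sin\alpha$, $s \in [-1,1]$, yields $P(z(s)) = (1-s^2)^p \sin^{2p}\alpha$ and leads to the clean representation $I = \sin^{2p+1}\alpha\int_{-1}^{1}(1-s^2)^p\,z(s)^{m-1}/(1-rz(s))\, ds$; here $|z(s)^{m-1}|$ is bounded uniformly because $m - 1 \geq -2p$. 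The factor $\sin^{2p+1}\alpha$ supplies $\alpha^{2p+1}$, and the chord integral is estimated using the endpoint-vanishing factor $(1-s^2)^p$ together with the hypothesis $1-r \leq \alpha/4$. To extract the $|m|^p$ decay, I would instead perform $p$ integrations by parts in $\theta$ on the original integral: the amplitude $(\cos\theta - \cos\alpha)^p/(1-re^{i\theta})$ vanishes to order $p$ at $\theta = \pm\alpha$ (so all boundary terms vanish), and each IBP against the oscillatory factor $e^{i(m+p)\theta}$ provides a $(m+p)^{-1}$ factor, yielding the interpolating bound $O(\alpha^{2p}/(\alpha^p|m|^p + 1))$.

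For Case 2 ($m \leq -2p$), close $C_\alpha$ via the complementary arc $C_\alpha^c$ through $z = -1$, forming the full unit circle, which encloses the pole at $z = 0$. By the residue theorem, $I = 2\pi\,\mathrm{Res}_{z=0}\bigl(P(z)z^{m-1}/(1-rz)\bigr) - \tfrac{1}{i}\int_{C_\alpha^c}P(z)z^{m-1}/(1-rz)\,dz$. The residue equals the coefficient of $z^{-m}$ in $P(z)/(1-rz)$; expanding via the geometric series and using the self-reciprocity $z^{2p}P(1/z) = P(z)$ (which holds because the roots $e^{\pm i\alpha}$ of $P$ are mutual reciprocals on the unit circle), this coefficient collapses to $r^{-m-2p}P(r) = r^{-(2p+m)}(1-2r\cos\alpha+r^2)^p$, producing the claimed explicit main term. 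The residual integral over $C_\alpha^c$ is bounded via $p$-fold integration by parts in $\theta$: the factor $(\cos\theta-\cos\alpha)^p$ still vanishes at the endpoints $\theta = \pm\alpha$ of $C_\alpha^c$, and on this arc $|1-re^{i\theta}|$ is bounded below by a constant multiple of $\alpha$, so the resulting derivatives of the amplitude are controllable and the error matches $O(\alpha^{2p}/(\alpha^p|m|^p+1))$.

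The main technical obstacle is uniformity in $r \in [1-\alpha/4,1)$: the pole of $1/(1-re^{i\theta})$ approaches the integration contour as $r \to 1$, so bounds on derivatives of the amplitude must carefully track the growth of $(1-re^{i\theta})^{-1}$, which behaves like $(1-r+|\theta|)^{-1}$ near $\theta = 0$. Balancing this growth against the vanishing of $(\cos\theta-\cos\alpha)^p$ (whose $k$-th derivative on $[-\alpha,\alpha]$ is of order $\alpha^{2p-k}$ for $k\leq p$) is required to avoid spurious logarithmic factors and to interpolate correctly between the regimes $|m|\alpha \lesssim 1$, where direct estimation gives the $\alpha^{2p}$ bound, and $|m|\alpha \gtrsim 1$, where IBP provides the $|m|^{-p}$ decay.
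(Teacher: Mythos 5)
The paper gives no proof of this lemma: it is quoted directly from Korevaar \cite[Proposition~16.1, Chapter~III]{Korevaar} (following Subhankulov), so the only comparison available is with that classical argument. Your skeleton matches it: pass to $\frac1i\int P(z)z^{m-1}(1-rz)^{-1}\,dz$ over the arc, use the chord $z(s)=\cos\alpha+is\sin\alpha$, and for $m\le-2p$ extract the main term as $2\pi\operatorname{Res}_{z=0}$, which the self-reciprocity $z^{2p}P(1/z)=P(z)$ collapses to $r^{-(2p+m)}(1-2r\cos\alpha+r^2)^p$. That residue computation is correct. However, the entire content of the lemma is in the error bounds, and both estimates you propose for them fail as described.

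In Case 2 you bound the error by the integral over $C_\alpha^c$ and claim $p$-fold integration by parts yields $O(\alpha^{2p}/(\alpha^p|m|^p+1))$. It cannot: near $\theta=\pi$ the amplitude $(\cos\theta-\cos\alpha)^p/(1-re^{i\theta})$ and all its derivatives are of size $O(1)$ (no endpoint vanishing, no small denominator), so $p$ integrations by parts give at best $O(|m+p|^{-p})$; the hypothesis is only $|m|\ge 2p$, so for $m=-2p$ this is $O(1)$ against a required $O(\alpha^{2p})$. The integral over $C_\alpha^c$ really is that small, but only because of cancellation along the whole arc (for $p=1$, $m=-2$, $r=1$ one can integrate explicitly via partial fractions and an antiderivative whose boundary values at $e^{\pm i\alpha}$ nearly coincide); no absolute-value estimate of the integrand or its derivatives can see this. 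In Case 1, integrating by parts on the arc produces the term $(\cos\theta-\cos\alpha)^p\frac{d^p}{d\theta^p}(1-re^{i\theta})^{-1}$, which near $\theta=0$ has size $\alpha^{2p}(1-r)^{-(p+1)}$; since $1-r\le\alpha/4$ bounds $1-r$ from above, not below, this gives only $O(\alpha^{2p}(1-r)^{-p}|m|^{-p})$, arbitrarily worse than the claimed $O(\alpha^{2p}(\alpha|m|)^{-p})$. You flag exactly this as ``the main technical obstacle'' but do not resolve it. The classical resolution is to perform the oscillatory estimate on the chord, where $|1-rz(s)|\ge 1-r\cos\alpha\ge c\alpha^2$ and $|1-rz(s)|\ge r|s|\sin\alpha$ uniformly in $r$, with the oscillation supplied by $z(s)^{m}$ (whose argument varies like $m\alpha s$); even there a naive absolute-value bound leaves a spurious $\log(1/\alpha)$, so the bookkeeping is delicate. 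As written, neither error estimate is established, and this is where the whole difficulty of the lemma lies.
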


Recall $T(z)=\sum_{j=0}^{\infty}T_j z^j$ for $|z|<1$.  An immediate consequence
of the above proposition is

\begin{cor}[\textbf{cf~\cite[Corollary 16.2, Chaper~III]{Korevaar}}] 
Let $p\in\N$ be fixed.
Then for all $r\in(0,1)$, $\alpha\in (0,1]$ with $1-r\leq \alpha/4$, and $n\geq 2p$,
\begin{align}
 \nonumber 2\pi  r^{n-2p}(1-2r\cos\alpha+r^2)^p \sum_{j=0}^{n-2p} T_j =\int_{-\alpha}^{\alpha}\frac{T(re^{i\theta})}{1-re^{i\theta}}
(e^{i\theta}-e^{i\alpha})^p(e^{i\theta}-e^{-i\alpha})^pe^{-in\theta}
d\theta+ B,
\end{align}
where $||B||=O\Big(\sum_{j=0}^{\infty}
\frac{||T_j||r^j\alpha^{2p}}{\alpha^p|j-n|^p +1}\Big)$.
\label{part-cor}
\end{cor}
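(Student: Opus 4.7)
The plan is to expand $T(re^{i\theta})=\sum_{j=0}^\infty T_jr^je^{ij\theta}$ inside the integrand, swap sum and integral, and apply Lemma~\ref{prop-Kor} to each coefficient integral
\[
I_m:=\int_{-\alpha}^{\alpha}\frac{(e^{i\theta}-e^{i\alpha})^p(e^{i\theta}-e^{-i\alpha})^p}{1-re^{i\theta}}e^{im\theta}\,d\theta,\qquad m=j-n.
\]
The summation then splits naturally at $j=n-2p$: for $j\leq n-2p$ (so $m\leq -2p$) Lemma~\ref{prop-Kor} delivers the explicit main term plus an error, whereas for $j\geq n-2p+1$ (so $m\geq -(2p-1)$) only the error remainder contributes.

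To justify the interchange of sum and integral I would note that $T(z)=(I-R(z))^{-1}$ is analytic on $\D$ by (H1$'$) and (H2), so Cauchy's inequalities give $\|T_j\|\leq M(r')r'^{-j}$ for any $r<r'<1$, where $M(r')=\sup_{|z|=r'}\|T(z)\|<\infty$. Hence $\sum_j\|T_j\|r^j<\infty$, and the operator series $\sum_j T_jr^je^{ij\theta}$ converges absolutely in norm, uniformly in $\theta$, dominating the integrand.

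Now I would apply Lemma~\ref{prop-Kor}. For $j\leq n-2p$ the second case gives
\[
I_{j-n}=2\pi r^{n-j-2p}(1-2r\cos\alpha+r^2)^p+O\bigl(\alpha^{2p}/(\alpha^p|j-n|^p+1)\bigr).
\]
Multiplying by $T_jr^j$, the factor $r^j\cdot r^{n-j-2p}=r^{n-2p}$ is independent of $j$, so
\[
\sum_{j=0}^{n-2p}T_jr^jI_{j-n}=2\pi r^{n-2p}(1-2r\cos\alpha+r^2)^p\sum_{j=0}^{n-2p}T_j+B_1,
\]
with $\|B_1\|\ll\sum_{j=0}^{n-2p}\|T_j\|r^j\alpha^{2p}/(\alpha^p|j-n|^p+1)$. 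For $j\geq n-2p+1$, the first case of Lemma~\ref{prop-Kor} gives $|I_{j-n}|\ll\alpha^{2p}/(\alpha^p|j-n|^p+1)$, which, after multiplication by $T_jr^j$ and summation, produces a further error $B_2$ of the same form. Setting $B=B_1+B_2$ gives the stated bound on $\|B\|$.

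The main obstacle is purely bookkeeping rather than conceptual: one must check that the powers of $r$ in the main term collapse to $r^{n-2p}$ uniformly in $j$, so that the partial sum $\sum_{j=0}^{n-2p}T_j$ appears cleanly on the left, and that the two error pieces combine into the single sum over all $j\geq 0$ claimed. The hypothesis $1-r\leq\alpha/4$ enters only through the invocation of Lemma~\ref{prop-Kor}; no further use of positivity or of the specific structure of (H2) is needed beyond ensuring analyticity of $T(z)$ on $\D$.
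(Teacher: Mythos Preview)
Your proposal is correct and is exactly the argument the paper has in mind: the paper's entire proof is the sentence ``This is exactly the same as the proof of \cite[Corollary 16.2, Chapter~III]{Korevaar} formulated for the scalar case,'' and you have spelled out that scalar argument (expand $T(re^{i\theta})$, interchange, apply Lemma~\ref{prop-Kor} termwise with $m=j-n$, and observe that the main-term powers $r^j\cdot r^{n-j-2p}$ collapse to $r^{n-2p}$). Your justification of the interchange via analyticity of $T(z)$ on $\D$ is fine, though note that the paper already records this analyticity directly from the definition $T(z)=\sum T_nz^n$, so no appeal to (H2) is strictly needed here.
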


\begin{proof} This is exactly the same as the proof of \cite[Corollary
16.2, Chaper~III]{ Korevaar} formulated for the scalar case.
\end{proof}

Below we collect some useful tools for the proof of Theorem~\ref{beta>half}.

\begin{prop}\label{z-s} Let  $\theta\in [-\pi/2,\pi/2]$. Then $|1-e^{-1/n}e^{i\theta}|^{-1}\ll n$ and  $|1-e^{-1/n}e^{i\theta}|^{-1}\ll 1/|\theta|$.

\end{prop}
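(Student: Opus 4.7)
Set $r=e^{-1/n}$ and expand
\[
|1-re^{i\theta}|^2 = (1-r)^2 + 2r(1-\cos\theta) = (1-r)^2 + 4r\sin^2(\theta/2).
\]
From this identity, each of the two summands on the right gives a lower bound on $|1-re^{i\theta}|$, and these yield the two claims separately.

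For the first claim, I would use that $1-e^{-1/n}\ge c/n$ for some constant $c>0$ and all $n\ge 1$ (it is equal to $1/n + O(1/n^2)$ as $n\to\infty$, and is bounded below for small $n$), so $(1-r)^2\ge c^2/n^2$, giving $|1-re^{i\theta}|\ge c/n$ uniformly in $\theta$, hence $|1-re^{i\theta}|^{-1}\ll n$.

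For the second claim, I would combine the elementary estimate $|\sin x|\ge (2/\pi)|x|$ for $|x|\le \pi/2$ (applicable here since $|\theta/2|\le \pi/4$) with the uniform lower bound $r=e^{-1/n}\ge e^{-1}$ valid for all $n\ge 1$. This gives $4r\sin^2(\theta/2)\ge (4/(\pi^2 e))\theta^2$, hence $|1-re^{i\theta}|\ge c'|\theta|$ and therefore $|1-re^{i\theta}|^{-1}\ll 1/|\theta|$.

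There is no real obstacle here: the proposition is a routine estimate, and the only point worth noting is the uniformity of the constants in $n$ and $\theta$, which follows from the simple observations above.
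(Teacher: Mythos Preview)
Your proof is correct and follows essentially the same approach as the paper. The paper bounds $|1-e^{-1/n}e^{i\theta}|$ from below by its real part $1-e^{-1/n}\cos\theta\ge 1-e^{-1/n}$ for the first estimate and by its imaginary part $e^{-1/n}|\sin\theta|\gg|\theta|$ for the second; your use of the explicit identity $|1-re^{i\theta}|^2=(1-r)^2+4r\sin^2(\theta/2)$ achieves the same two lower bounds in an equivalent way.
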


\begin{proof} To obtain the first estimate, we use the fact that $|1-e^{-1/n}e^{i\theta}|\geq\Re(1-e^{-1/n}e^{i\theta})\geq 1-e^{-1/n}$. Hence, $|1-e^{-1/n}e^{i\theta}|^{-1}\ll n$. 

To obtain the second estimate, we use the fact that $|1-e^{-1/n}e^{i\theta}|\geq |\Im(1-e^{-1/n}e^{i\theta})|= |\sin\theta|\gg|\theta|$. Hence, 
$|1-e^{-1/n}e^{i\theta}|^{-1}\ll 1/|\theta|$ .\end{proof}

\begin{prop}\label{A-s}Let $\gamma\in (0,1/2)$ fixed and let $A(n):=1-2e^{-1/n}\cos(1/n^\gamma)+e^{-2/n}$. Let
$A(\theta,n):=1-2e^{i\theta}\cos(1/n^\gamma)+e^{2i\theta}$ for $\theta\in [-1/n^\gamma, 1,n^\gamma]$. Then the following hold:

\begin{itemize}
\item[(a)] $|A^{-1}(n)|\ll n^{2\gamma}$ and  $|A^{-1}(n)A(\theta,n)|=O(1)$, uniformly in $\theta\in [-1/n^\gamma, 1/n^\gamma]$. 
% \item[(b)] Let $\sigma\in [-n^{1-\gamma}, n^{1-\gamma}]$. Then $A(\sigma/n,n)A^{-1}(n)$ converges pointwise to $1$, as $n\to\infty$.
% \end{itemize}
% 
% For any fixed $p\in\mathbb{N}$,
% 
% \begin{itemize}
\item[(b)] $|A^{-2}(n)(A^2(\theta,n)-A^2(n))|=O(1/n)$, uniformly in $\theta\in [-1/n, 1/n]$.
\item[(c)] If $\theta\in [-1/n^\gamma,-1/n]\cup [1/n, 1/n^\gamma]$ then $|A^{-2}(n)(A^2(\theta,n)-A^2(n))|\ll n^{2\gamma}\theta^{2\gamma+1}$.
\end{itemize}
\end{prop}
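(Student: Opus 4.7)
The plan is to exploit the common structure of $A(n)$ and $A(\theta,n)$ as values of the quadratic $w^2-2w\cos(1/n^\gamma)+1=(w-e^{i/n^\gamma})(w-e^{-i/n^\gamma})$ at $w=e^{-1/n}$ and $w=e^{i\theta}$ respectively. This yields $A(n)=|1-e^{-1/n+i/n^\gamma}|^2$ and $A(\theta,n)=(e^{i\theta}-e^{i/n^\gamma})(e^{i\theta}-e^{-i/n^\gamma})$. Combined with the two-sided bound $c_1|z|\le|1-e^z|\le c_2|z|$ for small $|z|$, and the fact that $\gamma<1/2$ makes $1/n^{2\gamma}$ dominate $1/n^2$, part (a) follows immediately: $A(n)$ is of order $1/n^2+1/n^{2\gamma}$, hence of order $1/n^{2\gamma}$, so $|A^{-1}(n)|\ll n^{2\gamma}$; and $|A(\theta,n)|\ll|\theta+1/n^\gamma||\theta-1/n^\gamma|=1/n^{2\gamma}-\theta^2\le 1/n^{2\gamma}$ for $|\theta|\le 1/n^\gamma$.

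For parts (b) and (c), I will use $A^2(\theta,n)-A^2(n)=(A(\theta,n)-A(n))(A(\theta,n)+A(n))$ together with the polynomial identity $(w_1^2-2w_1c+1)-(w_2^2-2w_2c+1)=(w_1-w_2)(w_1+w_2-2c)$, $c=\cos(1/n^\gamma)$, to write
\[
A(\theta,n)-A(n)=(e^{i\theta}-e^{-1/n})\bigl((e^{i\theta}+e^{-1/n})-2\cos(1/n^\gamma)\bigr).
\]
The first factor satisfies $|e^{i\theta}-e^{-1/n}|\ll\sqrt{\theta^2+1/n^2}$. For the second factor $f(\theta,n)$, Taylor expansion yields $\Re f=1/n^{2\gamma}-\theta^2/2-1/n+O(1/n^{4\gamma})$ and $\Im f=\sin\theta$. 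Since $|\theta|\le 1/n^\gamma$ bounds $\theta^2/2$ by $1/(2n^{2\gamma})$ and $\gamma<1/2$ gives $1/n=o(1/n^{2\gamma})$, I obtain $|f(\theta,n)|\ll 1/n^{2\gamma}+|\theta|$; from part (a), $|A(\theta,n)+A(n)|\le|A(\theta,n)|+|A(n)|\ll 1/n^{2\gamma}$.

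Combining these bounds with $|A^{-2}(n)|\ll n^{4\gamma}$ gives (b) and (c). For (b), $|\theta|\le 1/n$ forces $\sqrt{\theta^2+1/n^2}\ll 1/n$ and $|f|\ll 1/n^{2\gamma}$ (since $|\theta|\le 1/n\le 1/n^{2\gamma}$), so the quotient is $\ll(1/n)(1/n^{2\gamma})(1/n^{2\gamma})/(1/n^{4\gamma})=1/n$. For (c) I split at $|\theta|=1/n^{2\gamma}$: on $[1/n,1/n^{2\gamma}]$ the quotient is $\ll|\theta|$, which is $\le n^{2\gamma}|\theta|^{2\gamma+1}$ because $|\theta|\ge 1/n$ is equivalent to $1\le n^{2\gamma}|\theta|^{2\gamma}$; on $[1/n^{2\gamma},1/n^\gamma]$ the quotient is $\ll\theta^2 n^{2\gamma}$, which is $\le n^{2\gamma}|\theta|^{2\gamma+1}$ because $|\theta|\le 1$ with $2\gamma<1$ forces $\theta^2\le|\theta|^{2\gamma+1}$. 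The main obstacle is essentially bookkeeping: keeping every Taylor remainder in $\Re f$ uniformly dominated by $1/n^{2\gamma}$ across the full range $|\theta|\le 1/n^\gamma$, which is precisely where $\gamma<1/2$ is used.
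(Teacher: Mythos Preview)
Your proof is correct and follows essentially the same route as the paper: the same factorizations $A(n)=(1-e^{-1/n}e^{i/n^\gamma})(1-e^{-1/n}e^{-i/n^\gamma})$, $A(\theta,n)=(e^{i\theta}-e^{i/n^\gamma})(e^{i\theta}-e^{-i/n^\gamma})$, and $A(\theta,n)-A(n)=(e^{i\theta}-e^{-1/n})(e^{i\theta}+e^{-1/n}-2\cos(1/n^\gamma))$, combined with $A^2(\theta,n)-A^2(n)=(A(\theta,n)+A(n))(A(\theta,n)-A(n))$ and the bound $|A(\theta,n)+A(n)|\ll n^{-2\gamma}$ from part~(a). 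The only cosmetic difference is in part~(c): the paper compresses your two subranges into the single line $|\theta|(|\theta|+1/n^{2\gamma})\ll|\theta|^{2\gamma+1}$ (which is exactly your two cases combined), whereas you make the split at $|\theta|=1/n^{2\gamma}$ explicit.
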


\begin{proof} (a) Write $A(n)=(1-e^{-1/n}e^{i/n^\gamma})(1-e^{-1/n}e^{-i/n^\gamma})$ and $A(\theta,n)=(e^{i\theta}-e^{i/n^\gamma})(e^{i\theta}-e^{-i/n^\gamma})$. Setting $\theta=\pm 1/n^\gamma$ in Proposition~\ref{z-s} yields the estimate for $A^{-1}(n)$. Clearly,  $|A(\theta,n)|\ll 1/n^{2\gamma}$ for  $\theta\in [-1/n^\gamma, 1,n^\gamma]$, yielding the second estimate.

% $(b)$ Put $A(\sigma/n,n)A^{-1}(n)=a_{+}(\sigma,n)a_{-}(\sigma,n)$, where
% \[a_{\pm}(\sigma,n):=\frac{(e^{i\sigma/n}-e^{\pm i/n^{\gamma}})}{(1-e^{-1/n}e^{\pm i/n^{\gamma}})}.\] Then $ a_{+}(\sigma,n)=\frac{-i/n^{\gamma}+o(1/n^\gamma)}{-i/n^{\gamma}+o(1/n^\gamma)}\to 1,$  as  $n\to\infty$. Similarly, $a_{-}(\sigma,n)\to 1$, as $n\to\infty$, which ends the proof of $(b)$.
% 
$(b)$ Let $A_1(\theta,n):=A(\theta,n)-A(n)=(e^{i\theta}-e^{-1/n})(e^{i\theta}+e^{-1/n}-2\cos(1/n^\gamma))$. 
Since $\gamma<1/2$, $|A_1(\theta,n)|\ll (|\theta|+1/n)(|\theta|+1/n^{2\gamma})\ll n^{-2\gamma-1}$ uniformly in $\theta\in [-1/n,1/n]$.  Recall that $|A^{-1}(n)|\ll n^{2\gamma}$ and that $|A^{-1}(n)A(\theta,n)|=O(1)$.
Writing $A^2(\theta,n)-A^2(n)=(A(\theta,n)+A(n))A_1(\theta,n)$, we see that
\[|A^{-2}(n)(A^2(\theta,n)-A^2(n))|\ll |A^{-1}(n)A_1(\theta,n)|\ll n^{2\gamma} n^{-(2\gamma+1)}= 1/n.\]

$(c)$ If $\theta\in [-1/n^\gamma,-1/n]\cup [1/n, 1/n^\gamma]$, then $|A_1(\theta,n)|\ll |\theta|(|\theta|+1/n^{2\gamma})\ll|\theta|^{2\gamma+1}$.
Thus, $|A^{-2}(n)(A^2(\theta,n)-A^2(n))|\ll n^{2\gamma} |\theta|^{2\gamma+1}$.\end{proof}

\begin{pfof}{Theorem~\ref{beta>half}} We work out the implications of Corollary~\ref{part-cor}. Fix $p=2$, $\gamma\in(0,\beta)\cap(0,\frac12)$ and let
 $r=e^{-1/n}$,  
$\alpha=n^{-\gamma}$ (in particular, the constraint $1-r\leq\alpha/4$ is satisfied) and write
\begin{equation}\label{Sneq}
 \frac{2\pi}{e} e^{4/n}A^2(n) \sum_{j=0}^{n-4} T_j =\int_{-\alpha}^{\alpha}\frac{T(e^{-1/n} e^{i\theta})}{1-e^{-1/n}e^{i\theta}}A^2(\theta,n)
e^{-in\theta}\, d\theta +B,
\end{equation}
where \[||B||=O\Big(\sum_{j=0}^{\infty} \frac{||T_j||\alpha^4}{\alpha^2(j-n)^2 +1}\Big).\]

By Lemma~\ref{lem-zestimate}, $\|T(e^{-1/n} e^{i\theta})\|\ll |\frac1n-i\theta|^{-\beta}\le |\theta|^{-\beta}$. Hence, for all
$j\geq 0$,
\[
||T_j||\ll \int_{-\pi}^{\pi}|\theta|^{-\beta}\,d\theta=O(1).
\]
Thus,
\begin{equation}\label{sumop}
 ||B||\ll \alpha^4\Big(\sum_{|j-n|\leq 1/\alpha}\frac{1}{\alpha^2(j-n)^2 +1} +
\sum_{|j-n|\geq 1/\alpha}\frac{1}{\alpha^2(j-n)^2 +1}\Big)\ll \alpha^4 (1/\alpha)=n^{-3\gamma}.
\end{equation}
By Proposition~\ref{A-s}(a), $A^{-2}(n)\ll n^{4\gamma}$. Multiplying  (\ref{Sneq}) by $A^{-2}(n)$, we have
\begin{equation}\label{SnHO}
\frac{ 2\pi }{e}e^{4/n} \sum_{j=0}^{n-4} T_j =\int_{-n^{-\gamma}}^{n^{-\gamma}}\frac{T(e^{-1/n}e^{i\theta})}{(1-e^{-1/n}e^{i\theta})}\frac{A^2(\theta,n)}{A^{2}(n)}e^{-in\theta}\,d\theta+O(n^\gamma)=I+O(n^\gamma).
\end{equation}Next, write
\[I=\int_{-1/n^\gamma}^{1/n^\gamma}\frac{T(e^{-1/n} e^{i\theta})}{1-e^{-1/n}e^{i\theta}}
e^{-in\theta}\, d\theta +\int_{-1/n^\gamma}^{1/n^\gamma}\frac{T(e^{-1/n}e^{i\theta})}{1-e^{-1/n}e^{i\theta}}\frac{A^2(\theta,n)-A^2(n)}{A^2(n)}
e^{-in\theta}\, d\theta =J+H\]

We first estimate $H$.  Put $F(\theta,n):=T(e^{-1/n}e^{i\theta})(1-e^{-1/n}e^{i\theta})^{-1}A^{-2}(n)(A^2(\theta,n)-A^2(n))$.
Recall $\|T(e^{-1/n} e^{i\theta})\|\ll |\frac1n-i\theta|^{-\beta}\ll n^\beta$. By  Proposition~\ref{z-s},  $|1-e^{-1/n}e^{i\theta}|^{-1}\ll n$.
This together with  Proposition~\ref{A-s}(b)  implies
that $\|1_{[-1/n, 1/n]}F(\theta,n)\|\ll n^{\beta}$. Thus, $\|\int_{-1/n}^{1/n}F(\theta,n)\,d\theta\|\ll n^{\beta-1}$. 

By  Proposition~\ref{z-s},  $|1-e^{-1/n}e^{i\theta}|^{-1}\ll 1/|\theta|$. Hence, $\|T(e^{-1/n} e^{i\theta})(1-e^{-1/n}e^{i\theta})^{-1}\|\ll |\theta|^{-(\beta+1)}$. This together with 
 Proposition~\ref{A-s}(c)  implies that $\|1_a F(\theta,n)\|\ll n^{2\gamma}|\theta|^{2\gamma-\beta}$, where $a:=[-1/n^\gamma, -1/n]\cup [1/n, 1/n^\gamma]$. 
Thus, $\|\int_a F(\theta,n)\,d\theta\|\ll  n^{2\gamma}\int_a|\theta|^{2\gamma-\beta}\,d\theta\ll
n^{\gamma\beta+\gamma-2\gamma^2}$. Putting the two estimates together, we have $\|H\|\ll n^{\gamma(\beta+1-2\gamma)}$.

Next, we estimate $J$. Since $(1-e^{-1/n}e^{i\theta})^{-1}=(\frac1n-i\theta)^{-1}(1+O(|\frac1n-i\theta|))$,
\[J=\int_{-1/n^\gamma}^{1/n^\gamma}\frac{T( e^{-1/n}e^{i\theta})}{\frac1n-i\theta}
e^{-in\theta}\, d\theta +\int_{-1/n^\gamma}^{1/n^\gamma} T(e^{-1/n} e^{i\theta})h(\theta,n)\,d\theta,\] where $|h(\theta,n)|=O(1)$. 
But $\|\int_{-1/n^\gamma}^{1/n^\gamma} T(e^{-1/n} e^{i\theta})h(\theta,n)\,d\theta \|\ll \int_{-1/n^\gamma}^{1/n^\gamma}|\theta|^{-\beta}\,d\theta\ll n^{-\gamma(1-\beta)}$, so
$J=J_1+O(n^{-\gamma(1-\beta)})$ where 
$J_1=\int_{-1/n^\gamma}^{1/n^\gamma}\frac{T( e^{-1/n}e^{i\theta})}{\frac1n-i\theta}
e^{-in\theta}\, d\theta$.
By Lemma~\ref{lem-zest-error}, 
\[
c\Gamma(1-\beta)T(z)=\sum_{j=0}^k c_H^j ({\SMALL\frac1n}-i\theta)^{j-(j+1)\beta}P +D(z),
\] 
where in the worst case ($\beta=\frac12$), $\|D(z)\|=O(\log\frac{1}{|u-i\theta|})$. 
Thus, we can write 
\[
c\Gamma(1-\beta)J_1= \sum_{j=0}^k c_H^j L_j P
+J',\] where\[L_j=\int_{-1/n^\gamma}^{1/n^\gamma}\frac{e^{-in\theta}}{(\frac1n-i\theta)^{(j+1)\beta-(j-1)}}\,d\theta,
\]
and
\begin{align*}
\|J'\|&\ll\int_{-1/n^\gamma}^{1/n^\gamma}\frac{\log\frac{1}{|\frac1n-i\theta|}}{|\frac1n-i\theta|}\,d\theta
\ll\log n\int_0^{1/n}n\,d\theta+\log n\int_{1/n}^{1/n^\gamma}\frac{1}{\theta}\,d\theta\ll(\log n)^2.
\end{align*}
 By Corollary~\ref{contb} (with $\rho=(j+1)\beta-j$),
\[L_j=\frac{2\pi}{e}\frac{n^{(j+1)\beta-j}}{\Gamma((j+1)\beta-(j-1))}+O(n^{\gamma((j+1)\beta)-j)}).\]
Putting all the estimates together,
\[
c\Gamma(1-\beta)I=\frac{2\pi}{e}(\sum_{j=0}^k d_j n^{(j+1)\beta-j})P+ O(n^{\gamma(\beta+1-2\gamma)}).
\]
The conclusion follows by plugging this estimate into (\ref{SnHO}) and taking
$\gamma$ sufficiently small.
\end{pfof}

% \begin{rmk} The argument of the proof of  Theorem~\ref{beta>half} can be easily generalized to obtain estimates for transformations characterized by tails of the form  $\mu(\varphi>n)=c(n^{-\beta}+H(n))$, $H(n)=n^{-q}$ for some $q>1$. Furthermore, the method of proof above can be applied to other infinite measure preserving transformations, whenever they are characterized by a suitable expansion of the eigenvalue $\lambda(z)$. Particular examples of interest are work in progress.
% \end{rmk}

\begin{rmk}\label{rmk-compl-taub}
The proof of Theorem~\ref{thm-taub}  goes exactly same as the proof of Theorem~\ref{beta>half} with $T(z)$ replaced by $\Phi(z)$, $\sum_{j=0}^{n-1} T_j $  replaced by $\sum_{j=0}^{n-1} u_j$ and
$j-(j+1)\beta$ replaced by $\gamma_j$, $j=1,\ldots,k$. The asymptotics of $\Phi(z)$ is part of the hypothesis of Theorem~\ref{thm-taub}. 
\end{rmk}

\appendix
\section{Proof of Lemmas~\ref{lem-zestimate},~\ref{lem-zest-error} and~\ref{lemma-HOT-beta0}}
\label{app-proofs}

By (H1$'$) and (H2), there exists $\epsilon>0$ and a continuous family of simple
eigenvalues of $R(z)$, namely $\lambda(z)$ for $z\in \bar\D\cap B_\epsilon(1)$ with
$\lambda(1)=1$.  Let $P(z):\mathcal{B}\to\mathcal{B}$ denote the
corresponding family of spectral projections  with  $P(1)=P$ and complementary
projections $Q(z)=I-P(z)$. 
Also, let
$v(z)\in\mathcal{B}$ denote the corresponding family of
eigenfunctions normalized so that $\int_Y v(z)\,d\mu=1$ for all $z$.
In particular, $v(1)\equiv1$.

Then we can write
\begin{equation}\label{exprT}
T(z)=(1-\lambda(z))^{-1}P(z)+(I-R(z))^{-1}Q(z), 
\end{equation}
for $z\in \bar\D\cap B_\epsilon(1)$, $z\neq1$.

\begin{prop} \label{prop-PQ} 
Assume (H1$'$) and (H2). There exists $\epsilon,C>0$ such that
$\|(I-R(z))^{-1}Q(z)\|\le C$ for $z\in \bar\D\cap
B_\epsilon(1)$, $z\neq1$. \qed
\end{prop}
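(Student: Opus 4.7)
The strategy is to isolate the singular contribution at $z=1$ inside $P(z)$, so that $(I-R(z))^{-1}Q(z)$ remains regular near $z=1$. Since $R(z)$ commutes with its spectral projection $P(z)$, the restriction of $R(z)$ to $Q(z)\mathcal{B}$ has spectrum $\sigma(R(z))\setminus\{\lambda(z)\}$. By (H2), $\sigma(R(1))\setminus\{1\}$ is a compact subset of $\C$ at positive distance from $1$. The norm continuity $z\mapsto R(z)$ provided by (H1$'$), together with upper semicontinuity of the spectrum for a norm-continuous family of bounded operators, implies that for $z$ in some small neighborhood $\bar\D\cap B_\epsilon(1)$, the set $\sigma(R(z))\setminus\{\lambda(z)\}$ also stays in a fixed compact subset of $\C\setminus\{1\}$.

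I would then fix a closed contour $\Gamma'\subset\C$ that encloses $\sigma(R(1))\setminus\{1\}$, avoids $1$, and (after possibly shrinking $\epsilon$) lies in the resolvent set of $R(z)$ while still enclosing $\sigma(R(z))\setminus\{\lambda(z)\}$ for every $z\in\bar\D\cap B_\epsilon(1)$. The Riesz functional calculus, applied to the function $w\mapsto(1-w)^{-1}$ (which is holomorphic in a neighborhood of the spectral part enclosed by $\Gamma'$), gives
\[
(I-R(z))^{-1}Q(z)=\frac{1}{2\pi i}\oint_{\Gamma'}\frac{(wI-R(z))^{-1}}{1-w}\,dw.
\]
For $z\neq 1$ this agrees with the operator extracted from the direct splitting \eqref{exprT}, and the right-hand side is manifestly regular as $z\to 1$ because $\Gamma'$ stays away from both $1$ and $\lambda(z)$.

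The remaining step is a routine uniform continuity argument: the map $(z,w)\mapsto(wI-R(z))^{-1}$ is jointly norm-continuous on the compact set $(\bar\D\cap\overline{B_{\epsilon/2}(1)})\times\Gamma'$, so its operator norm is uniformly bounded there. Combined with the finite length of $\Gamma'$ and $\inf_{w\in\Gamma'}|1-w|>0$, this produces the desired bound $\|(I-R(z))^{-1}Q(z)\|\le C$.

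The main obstacle is the first step, namely justifying that a single contour $\Gamma'$ can be chosen to work uniformly for all $z$ in some fixed neighborhood of $1$. This rests on upper semicontinuity of the spectrum for the norm-continuous family $R(z)$, a standard fact from spectral theory that must be invoked carefully on the closed disc, since the spectral gap inherited from (H2) at $z=1$ is the only input that propagates the separation to nearby $z$.
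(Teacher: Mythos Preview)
The paper does not supply a proof of this proposition: it is stated with a terminal \qed and treated as a routine consequence of analytic perturbation theory under (H1$'$) and (H2). Your argument is precisely the standard one that justifies such a statement --- continuity of $z\mapsto R(z)$, upper semicontinuity of the spectrum to retain a uniform spectral gap near $1$, and the Riesz functional calculus on a fixed contour $\Gamma'$ enclosing $\sigma(R(z))\setminus\{\lambda(z)\}$ to express $(I-R(z))^{-1}Q(z)$ as a norm-continuous function of $z$ on $\bar\D\cap\overline{B_\epsilon(1)}$ --- and it is correct.
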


\begin{pfof} {Lemma~\ref{lem-zestimate}}
The main part of the proof is to show that
$1-\lambda(z)\sim \Gamma(1-\beta)\ell(1/|u-i\theta|)(u-i\theta)^\beta$.
By~\eqref{exprT}, continuity of $P(z)$, and Proposition~\ref{prop-PQ},
$T(z)=(1-\lambda(z))^{-1}(P+o(1))+O(1)$ and the result follows.

For the asymptotics of $1-\lambda(z)$, we follow~\cite{AaronsonDenker01,MT}.
Note that $|v(z)-1|_\infty=o(1)$ as $z\to 1$.
Define the distribution function $G(x)=\mu(\varphi\le x)$.
Choose $\epsilon>0$  such that $\lambda(z)$  is well defined for $z=e^{-u+i\theta}$ for $u> 0$ and $\theta\in (-\epsilon, \epsilon)$.
Then $\lambda(z)v(z)=R(z)v(z)=R(e^{(-u+i\theta)\varphi}v(z))$,
and so $\lambda(z)=\int_Y \lambda(z)v(z)\,d\mu=
\int_Ye^{(-u+i\theta)\varphi}v(z)\,d\mu
=1-\int_Y (1-e^{(-u+i\theta)\varphi})v(z)\,d\mu$.

Let $\mathcal{F}_0$ denote the $\sigma$-algebra generated by $\varphi$, 
namely the partition into the sets $\{\varphi=n\}$.   Define the step function 
$\hat v(z):[0,\infty)\to\C$ given by
$\hat v(z)\circ \varphi=E(v(z)|\mathcal{F}_0)$.   Then
$\lambda(z)=1-\int_0^\infty (1-e^{(-u+i\theta)x)})\hat{v}_z(x)\, dG(x)$, where $|\hat{v}_z(x)-1|=o(1)$, uniformly in $x$, as $z\to 1$.

Define $\hat{G}_z$ such that  $d\hat{G}_z=\hat{v}_z dG$. Integrating by parts,
\[
1-\lambda(z)=\int_0^\infty (1-e^{(-u+i\theta)x)})\, d\hat{G}_z(x)=
(u-i\theta)\int_0^\infty e^{-(u-i\theta)x}g_z(x)(1-G(x))\,dx,
\]
where $|g_z(x)-1|=o(1)$,  uniformly in $x$, as $z\to 1$.

Since $1-G(x)=x^{-\beta}\ell(x)$, we can write
$1-\lambda(z) =\ell(1/|u-i\theta|)(u-i\theta)^{\beta}J(z)$, where
\begin{align*}
J(z)=\int_0^\infty \frac{e^{-(u-i\theta)x}}{[(u-i\theta)x]^{\beta}}
h_{u,\theta}(x) g_{u,\theta}(x) (u-i\theta)\,dx, \quad
h_{u,\theta}(x)=\frac{\ell(x)}{\ell(1/|u-i\theta|)}.
\end{align*}
Here, 
$|g_{u,\theta}-1|_{\infty}=o(1)$ as $u,\theta\to 0$.  
The proof will be complete once we show that $J(z)$ converges to $\Gamma(1-\beta)$ as $u,\theta\to 0$.  
We consider only the case $\theta>0$, since the case $\theta<0$  follows by the same argument.

We consider separately each of two possible cases: (i) $0\leq\theta\leq u$; (ii) $0\leq u\le \theta$. 
In both cases we let $\hat{h}_{u,\theta}(\sigma)=h_{u,\theta}(\sigma/|u-i\theta|)$.   

In case $(i)$ we let 
$I_1  =\int_0^\infty e^{-(u-i\theta)x}[(u-i\theta)x]^{-\beta}(u-i\theta)\,dx$. 
Put $y=\theta/u$. Using the substitution $\sigma=|u-i\theta|x$,  we have
\begin{align*}
J(z)&=I_1+\Big( \frac{1- iy}{|1-iy|}\Big)^{1-\beta}\int _0^\infty e^{-\sigma|1-iy|^{-1}} e^{ i\sigma y|1-iy|^{-1}}\sigma^{-\beta}(\hat{h}_{u,\theta}(\sigma)-1)\, d\sigma\\
&\, \qquad +\Big( \frac{1- iy}{|1-iy|}\Big)^{1-\beta}\int _0^\infty e^{-\sigma|1-iy|^{-1}} e^{ i\sigma y|1-iy|^{-1}}\sigma^{-\beta}\hat{h}_{u,\theta}(\sigma)(g_{u,\theta}(\sigma/|u-i\theta|)-1)\, d\sigma\\
&= I_1 +\Big( \frac{1- iy}{|1-iy|}\Big)^{1-\beta}(I_2+I_3).
\end{align*}
By Proposition~\ref{first_cont},  we have $I_1=\Gamma(1-\beta)$. It remains to estimate $I_2$ and $I_3$.

 We show that $\lim_{u,\theta\to 0}I_2=0$ by applying the dominated convergence theorem (DCT). By Potter's bounds (see e.g.~\cite{BGT}), for any fixed $\delta>0$, 
$|\hat{h}_{u,\theta}(\sigma)|\ll\sigma^{\delta}+ \sigma^{-\delta}$,  for all $\sigma\in (0,\infty)$. Since $|1-iy|\in [1,\sqrt 2]$, the integrand of $I_2$ is bounded, up to a constant,  by 
the function $e^{-\sigma/\sqrt 2}(\sigma^{-(\beta-\delta)}+\sigma^{-(\beta+\delta)})$, which is $L^1$ when 
taking $\delta\in (0,1-\beta)$. 
Moreover, for each fixed $\sigma\in (0,\infty)$, $\lim_{u,\theta\to 0}\hat{h}_{u,\theta}(\sigma)=1$ by definition of $\ell$ being slowly varying. Thus, for each $\sigma$,
\[\lim_{u,\theta\to 0}\frac{e^{-\sigma|1-iy|^{-1}} e^{ i\sigma y|1-iy|^{-1}}}{\sigma^{\beta}}(\hat{h}_{u,\theta}(\sigma)-1)= 0.\] 
 It follows from the DCT
that $\lim_{u,\theta\to 0}I_2=0$.

Next, since for any $\delta>0$, $|\hat{h}_{u,\theta}(\sigma)|\ll\sigma^{\delta}+ \sigma^{-\delta}$,  for all $\sigma\in (0,\infty)$ and $|1-iy|\in [1,\sqrt 2]$,
we have that 
 $|I_3|\ll |g_{u,\theta}-1|_\infty \int_0^\infty e^{-\sigma/\sqrt 2}(\sigma^{-(\beta-\delta)}+\sigma^{-(\beta+\delta)})\,d\sigma\ll {|g_{u,\theta}-1|_\infty\to0}$, as $u,\theta\to 0$. 
 This ends the proof in case (i).

In case (ii), we write 
$\hat{v}_z=1+\hat{v}_z^1-\hat{v}_z^2+i\hat{v}_z^3-i\hat{v}_z^4$, where $\hat{v}_z^j\geq 0$ and $\sup_x|\hat{v}_z^j(x)|=o(1)$ as $z\to 1$, for $j=1,2,3,4$.

Define $\hat{G}_z^j$ such that  
$d\hat{G}_z^j=\hat{v}_z^j dG$. Integrating by parts,
\begin{align*}
1-\lambda(z) &=
(u-i\theta)\int_0^\infty (1+\sum_{j=1}^4 q_j\hat{g}_{u,\theta}^j(x))e^{-(u-i\theta)x}(1-G(x))\,dx,
\end{align*}
so we obtain
\begin{align*}
J(z)&=
\int _0^\infty E_{u,\theta}(x)h_{u,\theta}(x) \,dx
+\sum_{j=1}^4 q_j\int_0^\infty E_{u,\theta}(x)  \hat{g}_{u,\theta}^j(x)h_{u,\theta}(x)\,dx=
I+\sum_{j=1}^4 q_j I^j,
\end{align*}
where 
$E_{u,\theta}(x):=e^{-(u-i\theta)x}[(u-i\theta)x]^{-\beta}(u-i\theta)$,
$q_1=1, q_2=-1, q_3=i, q_4=-i$ and $\sup_x|\hat{g}_{u,\theta}^j(x)|=o(1)$ as $u,\theta\to 0$ for $j=1,2,3,4$.
Write
\begin{align*}
I&=\int_0^{b/\theta} E_{u,\theta}(x)\,dx +\int_0^{b/\theta} E_{u,\theta}(x)( h_{u,\theta}(x)-1)\,dx
% \\&\qquad
+\int_{b/\theta}^\infty E_{u,\theta}(x) h_{u,\theta}(x)\,dx,
\end{align*} 
for some positive large $b$. Similarly, for $j=1,2,3,4$, write
 \[
 I^j=\int_0^{b/\theta} E_{u,\theta}(x)h_{u,\theta}(x) \hat{g}_{u,\theta}^j(x)\,dx +\int_{b/\theta}^\infty E_{u,\theta}(x) h_{u,\theta}(x) \hat{g}_{u,\theta}^j(x)\,dx.
\]

 Next,  put $y=u/\theta$. Using the substitution $\sigma=|u-i\theta|x$,  we  have
\begin{align*}
& \int_0^{b/\theta}E_{u,\theta}(x)( h_{u,\theta}(x)-1)\,dx
\\ & \qquad\qquad =\Big( \frac{y-i}{|y-i|}\Big)^{1-\beta}
\int _0^{|y-i|b} (\hat{h}_{u,\theta}(\sigma)-1)e^{-\sigma y|y-i|^{-1}} e^{ i\sigma |y-i|^{-1}}\sigma^{-\beta}\,d\sigma,
\end{align*}
and
\begin{align*}
& \int_0^{b/\theta}E_{u,\theta}(x)h_{u,\theta}(x) \hat{g}_{u,\theta}^j(x)\,dx \\ & \qquad\qquad =\Big( \frac{y-i}{|y-i|}\Big)^{1-\beta}
\int _0^{|y-i|b}e^{-\sigma y|y-i|^{-1}} e^{ i\sigma |y-i|^{-1}}\sigma^{-\beta}\hat{h}_{u,\theta}(\sigma) \hat{g}_{u,\theta}^j\Big(\frac{\sigma}{|u-i\theta|}\Big)\,d\sigma.\end{align*}

By the argument used for $I_2$ in case (i), we have  that $\lim_{u,\theta\to 0}\int_0^{b/\theta} E_{u,\theta}(x)( h_{u,\theta}(x)-1)\,dx=0$. Since $\hat{g}_{u,\theta}^j(\sigma/|u-i\theta|)\to 0$ uniformly in $\sigma$, the argument used for $I_3$ in case~(i)  shows that
  $\lim_{u,\theta\to 0}\int_0^{b/\theta} E_{u,\theta}(x)h_{u,\theta}(x) \hat{g}_{u,\theta}^j(x)\,dx =0$. 

The next step is to estimate 
\[
I_b^\infty:=\int_{b/\theta}^\infty E_{u,\theta}(x) h_{u,\theta}(x)\,dx+\sum_{j=1}^4 q_j\int_{b/\theta}^\infty E_{u,\theta}(x) h_{u,\theta}(x) \hat{g}_{u,\theta}^j(x)\,dx.
\] 
The substitution $\sigma=\theta x$ gives
 \begin{align*}I_b^\infty=(y-i)^{1-\beta}
\Big(\int_{b}^\infty\frac{e^{-\sigma y} e^{ i\sigma }}{\sigma^{\beta}}h_{u,\theta}(\sigma/\theta)\,d\sigma+
\sum_{j=1}^4 q_j\int_{b}^\infty\frac{e^{-\sigma y} e^{ i\sigma }}{\sigma^{\beta}}h_{u,\theta}(\sigma/\theta)\hat{g}_{u,\theta}^j(\sigma/\theta)\,d\sigma\Big).\end{align*}
But all the integrals are oscillatory  with $\sigma\mapsto     e^{-\sigma y} \sigma^{-\beta}h_{u,\theta}(\sigma/\theta)$ and 
$\sigma\mapsto     e^{-\sigma y}\sigma^{-\beta}h_{u,\theta}(\sigma/\theta)\hat{g}_{u,\theta}^j(\sigma/\theta)$, respectively,
decreasing for each fixed value of $u$ and $\theta$.    By Potter's bounds, $h_{u,\theta}(\sigma/\theta)\ll (\sigma|y-i|)^\delta +(\sigma|y-i|)^{-\delta}
\ll \sigma^\delta$ and so $I_b^\infty= O(b^{-(\beta-\delta)})$.

Putting all these together, $\lim_{u,\theta\to 0}J(z)=\lim_{u,\theta\to 0} \int_0^{b/\theta} E_{u,\theta}(x)\,dx  +O(b^{-(\beta-\delta)})$.
By Proposition~\ref{first_cont},  $\int_0^{b/\theta} E_{u,\theta}(x)\,dx =\Gamma(1-\beta)+O(b^{-\beta})$. This ends the proof in case~(ii), since $b$ is arbitrary. 
\end{pfof}

Next, we turn to the higher order expansions.
The following consequence of (H1) is
standard (see for instance~\cite[Proposition~2.7]{MT}).

\begin{prop} \label{prop-H1}Assume (H1) and that $\mu(\varphi>n)=\ell(n)n^{-\beta}$ where $\ell$ is slowly varying and  $\beta\in [0,1)$. Then
there is a constant $C>0$ such that $\|R(r e^{i(\theta+h)})-R(r
e^{i\theta})\|\le C \ell(1/h)h^\beta$ and $\|R(r)-R(1)\|\le C
\ell(\frac{1}{1-r})(1-r)^\beta$ for all $\theta\in[0,2\pi)$,
$r\in(0,1]$, $h>0$.  \qed
\end{prop}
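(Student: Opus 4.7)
\textbf{Proof plan for Proposition~\ref{prop-H1}.} The strategy is the usual one: expand $R(z)$ as a power series, use (H1) to reduce both estimates to scalar sums involving $\mu(\varphi=n)$, and then apply Karamata's theorem via summation by parts. Since the statement is a standard consequence of (H1), the proof is a short computation; the only mild care required is to accommodate the full range $\beta\in[0,1)$.

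For the first inequality, write
\[
R(re^{i(\theta+h)})-R(re^{i\theta})=\sum_{n=1}^\infty R_n r^n e^{in\theta}(e^{inh}-1),
\]
so by (H1) the operator norm is bounded by $C\sum_{n=1}^\infty \mu(\varphi=n)|e^{inh}-1|$. Split the sum at $N=\lfloor 1/h\rfloor$, using $|e^{inh}-1|\le nh$ for $n\le N$ and $|e^{inh}-1|\le 2$ otherwise. The tail contribution is $2\mu(\varphi>N)\ll h^\beta\ell(1/h)$. For the head contribution $h\sum_{n\le N}n\,\mu(\varphi=n)$, Abel summation gives
\[
\sum_{n=1}^N n\,\mu(\varphi=n)=\sum_{n=0}^{N-1}\mu(\varphi>n)-N\mu(\varphi>N),
\]
and Karamata's theorem (valid for $\beta\in[0,1)$) bounds this by $O(N^{1-\beta}\ell(N))$. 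Multiplying by $h$ yields $O(h^\beta\ell(1/h))$, as desired.

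For the second inequality, the same splitting works with $|r^n-1|$ in place of $|e^{inh}-1|$: the elementary bounds $1-r^n\le n(1-r)$ for $n\le N$ and $1-r^n\le 1$ otherwise, applied at $N=\lfloor 1/(1-r)\rfloor$, reduce matters to exactly the same two sums $(1-r)\sum_{n\le N}n\,\mu(\varphi=n)$ and $\mu(\varphi>N)$, each of which is $O((1-r)^\beta\ell(1/(1-r)))$ by the argument above. Uniformity in $r\in(0,1]$ and $h>0$ is automatic since the constants in Karamata's estimate depend only on the slowly varying function $\ell$ and on $\beta$.

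The only point requiring minor attention is the case $\beta=0$, where Karamata gives $\sum_{n<N}\mu(\varphi>n)\sim N\ell(N)$ and $N\mu(\varphi>N)=N\ell(N)$ so that the leading terms formally cancel; however, since we only need an upper bound and not an asymptotic, it suffices to discard the negative term and use $\sum_{n=1}^N n\,\mu(\varphi=n)\le\sum_{n=0}^{N-1}\mu(\varphi>n)=O(N\ell(N))$. No further obstacle is anticipated.
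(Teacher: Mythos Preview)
Your proof is correct and is exactly the standard argument the paper has in mind: the paper does not actually prove this proposition but marks it as a ``standard consequence of (H1)'' and refers to~\cite[Proposition~2.7]{MT}, whose proof proceeds precisely by expanding $R(z)$, invoking (H1), splitting the resulting scalar sum at $N\approx 1/h$ (respectively $N\approx 1/(1-r)$), and summing by parts with Karamata. Nothing further is needed.
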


\begin{cor}  \label{cor-H1}
The estimates for $R(z)$ in Proposition~\ref{prop-H1} are inherited by the
families $P(z)$, $Q(z)$, $\lambda(z)$ and  $v(z)$, where defined.
\qed
\end{cor}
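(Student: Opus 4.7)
The plan is to express each of $P(z)$, $Q(z)$, $\lambda(z)$ and $v(z)$ as an explicit functional of the resolvent $(wI-R(z))^{-1}$ and then transfer continuity estimates by way of the second resolvent identity. By hypothesis (H2), the eigenvalue $1$ of $R(1)$ is simple and isolated, so there is a small circle $\Gamma\subset\C$ encircling $1$ and no other point of the spectrum of $R(1)$. Upper semicontinuity of the spectrum together with the continuity of $z\mapsto R(z)$ on $\bar\D\cap B_\epsilon(1)$ (from~(H1$'$)) allows us to shrink $\epsilon$ so that $\Gamma$ lies in the resolvent set of $R(z)$ for all such $z$ and $\sup_{w\in\Gamma}\|(wI-R(z))^{-1}\|$ is uniformly bounded in $z$.

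The Dunford--Taylor formula gives $P(z)=\frac{1}{2\pi i}\oint_\Gamma (wI-R(z))^{-1}\,dw$. The second resolvent identity therefore yields
\[
P(z)-P(z')=\frac{1}{2\pi i}\oint_\Gamma (wI-R(z))^{-1}(R(z)-R(z'))(wI-R(z'))^{-1}\,dw,
\]
whence $\|P(z)-P(z')\|\ll \|R(z)-R(z')\|$. Specialising to $(z,z')=(re^{i(\theta+h)},re^{i\theta})$ and $(z,z')=(r,1)$ and feeding in Proposition~\ref{prop-H1} produces the claimed estimates for $P(z)$. The identity $Q(z)=I-P(z)$ handles $Q(z)$ immediately.

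For the remaining two objects, the scalar function $z\mapsto\int_Y P(z)\mathbf{1}\,d\mu$ is continuous and equals $\int_Y v(1)\,d\mu=1$ at $z=1$, hence is bounded away from $0$ on a smaller neighbourhood of $1$. The uniquely normalised eigenfunction can therefore be written as $v(z)=P(z)\mathbf{1}/\int_Y P(z)\mathbf{1}\,d\mu$, and the quotient rule combined with the already-proved Lipschitz estimate for $P(z)$ gives $\|v(z)-v(z')\|\ll\|R(z)-R(z')\|$. Since $\int_Y v(z)\,d\mu=1$ and $R(z)v(z)=\lambda(z)v(z)$, integrating yields $\lambda(z)=\int_Y R(z)v(z)\,d\mu$; the product-rule identity $R(z)v(z)-R(z')v(z')=(R(z)-R(z'))v(z)+R(z')(v(z)-v(z'))$ together with the uniform boundedness of $R(z')$ on the neighbourhood then gives $|\lambda(z)-\lambda(z')|\ll\|R(z)-R(z')\|$. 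Plugging the Hölder bounds from Proposition~\ref{prop-H1} into each of these Lipschitz estimates produces the four claimed bounds. The only mildly delicate ingredient is the uniform boundedness of the resolvent on $\Gamma$, which is standard from (H2) and the continuity of $R(z)$.
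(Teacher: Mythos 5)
Your argument is correct and is precisely the standard perturbation-theoretic reasoning that the paper invokes without proof (the corollary is stated with an immediate \qed): the Dunford--Taylor integral for $P(z)$, the second resolvent identity, $Q(z)=I-P(z)$, $v(z)=P(z)\mathbf{1}/\int_Y P(z)\mathbf{1}\,d\mu$, and $\lambda(z)=\int_Y R(z)v(z)\,d\mu$ are exactly the identities one needs, and each step transfers the H\"older bounds of Proposition~\ref{prop-H1} as you describe. Nothing is missing; you have simply written out the details the authors chose to omit.
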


\begin{lemma} \label{lem-lambda}
Assume (H1) and (H2) and let $\beta\in (0,1)$. 
Suppose that $\mu(\varphi>n)=c(n^{-\beta}+H(n))$, where $c>0$ 
and $H(n)=O(n^{-q})$, $q>\beta$.
If $q\le1$, we assume further that 
$H(n)=b(n)+c(n)$, where $b$ is monotone
with $|b(n)|=O(n^{-q})$ and $c(n)$ is summable.    

If $q>1$, set $c_H=-\Gamma(1-\beta)^{-1}\int_0^\infty H_1(x)\,dx$
where $H_1(x)=[x]^{-\beta}-x^{-\beta}+H([x])$.
If $q\le1$, set $c_H=0$.

Then writing  $z= e^{-u+i\theta}$, $u>0$, 
 \[
1-\lambda(z)=c\Gamma(1-\beta)\{(u-i\theta)^\beta-c_H(u-i\theta)
+O(|u-i\theta|^{2\beta})+D(z),
\]
where $D(z)= O(|u-i\theta|^q)$ if $q\neq1$, and 
$D(z)= O(|u-i\theta|\log\frac{1}{|u-i\theta|})$ if $q=1$.
\end{lemma}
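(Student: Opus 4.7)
The plan is to refine the proof of Lemma~\ref{lem-zestimate} by carrying the expansion one term further. Introducing the step function $\hat v_z\circ\varphi=E(v(z)|\mathcal{F}_0)$ as in that proof, we split
\[
1-\lambda(z)=\int_0^\infty(1-e^{(-u+i\theta)x})\,dG(x)+\int_0^\infty(1-e^{(-u+i\theta)x})(\hat v_z(x)-1)\,dG(x).
\]
The second integral is the easy piece: Corollary~\ref{cor-H1} gives $\|v(z)-1\|\ll|u-i\theta|^\beta$, while the tail bound $1-G(x)\ll x^{-\beta}$ combined with $|1-e^{(-u+i\theta)x}|\le\min(|u-i\theta|x,2)$ yields $\int_0^\infty|1-e^{(-u+i\theta)x}|\,dG(x)\ll|u-i\theta|^\beta$, so their product is $O(|u-i\theta|^{2\beta})$ and is absorbed into the stated error.

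For the first integral, integrate by parts to get $(u-i\theta)\int_0^\infty e^{-(u-i\theta)x}(1-G(x))\,dx$, and decompose $1-G(x)=cx^{-\beta}+r(x)$, where $r(x)=1-cx^{-\beta}$ on $[0,1)$ and $r(x)=cH_1(x)$ on $[1,\infty)$. The $cx^{-\beta}$ piece evaluates, via $\int_0^\infty e^{-sx}x^{-\beta}\,dx=\Gamma(1-\beta)s^{\beta-1}$, to the main term $c\Gamma(1-\beta)(u-i\theta)^\beta$. In the $r$-integral expand $e^{-(u-i\theta)x}=1+(e^{-(u-i\theta)x}-1)$: when $q>1$ the constant-one contribution $(u-i\theta)\int_0^\infty r(x)\,dx$ equals $-cc_H\Gamma(1-\beta)(u-i\theta)$ by the very definition of $c_H$ (using the convention that extends $H_1$ across $[0,1)$ to match the boundary term), while when $q\le1$ this integral fails to converge absolutely and is folded into the error $D(z)$ consistently with $c_H=0$. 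The remaining oscillatory piece $(u-i\theta)\int_0^\infty(e^{-(u-i\theta)x}-1)\,r(x)\,dx$ is then controlled by splitting at $A:=1/|u-i\theta|$ and using $|e^{-(u-i\theta)x}-1|\le|u-i\theta|x$ on $[0,A]$ and $\le 2$ on $[A,\infty)$, together with the pointwise bound $|r(x)|\ll x^{-\beta-1}+x^{-q}$ for $x\ge 1$; a direct calculation produces $O(|u-i\theta|^{2\beta})+O(|u-i\theta|^q)$ when $q\ne 1$ and $O(|u-i\theta|\log(1/|u-i\theta|))$ when $q=1$.

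The one genuine obstacle is the far tail on $[A,\infty)$ when $q\le 1$: here $H_1$ is not integrable, so the crude bound $|e^{-(u-i\theta)x}-1|\le 2$ is inadequate and the subtract-and-add device breaks down. The additional hypothesis on $b$ is exactly what saves us. Writing $H([x])=b([x])+c([x])$, the summable $c$-piece contributes $\int_A^\infty|c([x])|\,dx=o(1)$ directly, while the monotone $b$-piece is tamed by integration by parts in $\int_A^\infty e^{-(u-i\theta)x}b([x])\,dx$: monotonicity of $b$ converts the derivative contribution into a total-variation bound $|b(A)|\ll A^{-q}=|u-i\theta|^q$, and after the factor $1/(u-i\theta)$ from the partial integration and the overall $(u-i\theta)$ prefactor, this yields the required $O(|u-i\theta|^q)$ (respectively $O(|u-i\theta|\log(1/|u-i\theta|))$ when $q=1$). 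This oscillatory integration-by-parts step, exploiting monotonicity to tame a non-integrable tail, is the main technical content of the proof.
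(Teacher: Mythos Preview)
Your argument is correct and in fact streamlines the paper's proof in one notable respect. The paper carries out the integration by parts first and only afterwards isolates the $(\hat v_z-1)$ contribution; because the resulting integral $\int_0^\infty e^{-(u-i\theta)x}(1-G(x))(g_z(x)-1)\,dx$ fails to converge absolutely when $u\le\theta$, the paper is forced into a case split ($\theta\le u$ versus $u\le\theta$), and in the second case decomposes $\hat v_z-1$ into four nonnegative parts so as to apply Dirichlet-type oscillatory bounds. By estimating the $(\hat v_z-1)$ piece \emph{before} integrating by parts---where it is trivially dominated since $dG$ is a probability measure and $|1-e^{(-u+i\theta)x}|\le\min(|u-i\theta|x,2)$---you dispense with both the case split and the fourfold decomposition. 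For the monotone tail $b$, your Riemann--Stieltjes integration by parts against $e^{-(u-i\theta)x}/(u-i\theta)$ plays the same role as the paper's substitution $\sigma=\theta x$ followed by the alternating-integral bound $\int_0^\infty e^{i\sigma}f(\sigma)\,d\sigma=O(f(0))$ for $f$ decreasing; these are equivalent devices. Your exposition would be cleaner if you separated the $q>1$ and $q\le1$ regimes more explicitly: the sentence asserting that the ``direct calculation'' handles the oscillatory piece for all $q\ne1$ is only valid for $q>1$, and for $q\le1$ one must recombine the constant and oscillatory parts on $[A,\infty)$ before invoking the integration-by-parts step of your final paragraph.
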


\begin{proof}
We may suppose without loss that $q<\beta+1$.
The proof adapts the proof of Lemma~\ref{lem-zestimate}, noting that $\ell=c+o(1)$. 
The notation for the functions
$G$ and $g_{u,\theta}$ used in the proof of Lemma~\ref{lem-zestimate} keeps its meaning in the proof below. However, since (H1) holds
and  $1-G(x)=O(x^{-\beta})$, we have the more precise estimate
$\sup_{x\geq 0}|g_{u,\theta}(x)-1|\ll |u-i\theta|^\beta$.
Note also that $1-G(x)=c(x^{-\beta}+H_1(x))$ where
$H_1(x)=H(x)+O(x^{-(\beta+1)})=O(x^{-q})$.
As in the proof of Lemma~\ref{lem-zestimate}, we consider separately each of two possible cases: (i) $0\leq\theta\leq u$, (ii) $0\leq u\le \theta$.

In case (i), we have
\begin{align*}
1-\lambda(z) &=c(u-i\theta)^{\beta}\int _0^\infty \frac{e^{-(u-i\theta)x}}{[(u-i\theta)x]^{\beta}}
 (u-i\theta)\,dx+c(u-i\theta)\int_0^\infty e^{-(u-i\theta)x}H_1(x)\,dx
\\&+(u-i\theta)\int _0^\infty e^{-(u-i\theta)x}(1-G(x))
(g_{u,\theta}(x)-1) \,dx\\&=c(u-i\theta)^{\beta}I_1+c(u-i\theta)I_2+(u-i\theta)I_3.
\end{align*}

We already know that $I_1=\Gamma(1-\beta)$. Next, we estimate $I_3$
proceeding as for $I_3$ of case (i) in the proof of Lemma~\ref{lem-zestimate}.
Since we are in case (i), $|g_{u,\theta}|\ll u^\beta$, so
\[
|(u-i\theta)I_3|\ll u|g_{u,\theta}-1|_\infty \int_0^\infty e^{-ux}x^{-\beta}\,d\sigma\ll u^{2\beta}\int_0^\infty e^{-\sigma}\sigma^{-\beta}\, d\sigma \ll |u-i\theta|^{2\beta}.
\]
If $q<1$, then similarly
\begin{align*}
 |(u-i\theta)I_2| & \ll u\int_0^\infty e^{-ux}x^{-q}\,dx 
= u^q\int_0^\infty e^{-\sigma}\sigma^{-q}\,d\sigma \ll |u-i\theta|^q.
\end{align*}
If $q=1$, then
\begin{align*}
|(u-i\theta)I_2| & \ll  u + u\int_1^\infty e^{-ux}x^{-1}\,dx 
= u+ u\int_u^\infty e^{-\sigma}\sigma^{-1}\,d\sigma \ll u\log\frac1u
\\ & \le |u-i\theta|\log\frac{1}{|u-i\theta|}.
\end{align*}
Finally, for $q>1$,
\begin{align*}
 I_2=\int_0^\infty (e^{-(u-i\theta)x}-1)H_1(x)\,dx+\int_0^\infty H_1(x)\,dx=I_2'-c_H\Gamma(1-\beta),
\end{align*}
where
\begin{align*}
 |I_2'|\ll \Big(\int_0^{1/|u-i\theta|}|u-i\theta|x^{1-q}\,dx+\int_{1/|u-i\theta|}^\infty 1/x^q\,dx\Big)\ll|u-i\theta|^{q-1}.
\end{align*}

 In case (ii), we write
\begin{align*}
1-\lambda(z) &=c(u-i\theta)^{\beta}
\int _0^\infty \frac{e^{-(u-i\theta)x}}{[(u-i\theta)x]^{\beta}} (u-i\theta)\,dx+  c(u-i\theta)\int _0^\infty e^{-(u-i\theta)x}H_1(x)\,dx\\ 
&+(u-i\theta)\sum_{j=1}^4 q_j\int_0^\infty   e^{-(u-i\theta)x}(1-G(x)) \hat{g}_{u,\theta}^j(x)\,dx
\\& =(u-i\theta)^{\beta}I_1+(u-i\theta)I_2+(u-i\theta)\sum_{j=1}^4 q_j I^j,
\end{align*}
where $q_1=1, q_2=-1, q_3=i, q_4=-i$ and $\sup_x|\hat{g}_{u,\theta}^j(x)|\ll|u-i\theta|^\beta$  for $j=1,2,3,4$. 

Again $I_1=\Gamma(1-\beta)$.
To estimate $I^j$, $j=1,2,3,4$, we proceed similarly to the case (ii) of the proof of  Lemma~\ref{lem-zestimate}. Put $y=u/\theta$, so $ y\leq 1$. 
Substituting  $\sigma=\theta x$,
 \begin{align*}
\theta I^j&=
\int_0^\infty e^{-\sigma y}  e^{i\sigma}(1-G(\sigma/\theta))\hat{g}_{u,\theta}^j(\sigma/\theta)\,d\sigma.\end{align*}
Since $\sigma\mapsto e^{-\sigma y} (1-G(\sigma/\theta))\hat{g}_{u,\theta}^j(\sigma/\theta)$   is
decreasing for each fixed value of $u$ and $\theta$, 
\begin{align*}
\int_0^\infty e^{-\sigma y} \cos\sigma (1-G(\sigma/\theta))\hat{g}_{u,\theta}^j(\sigma/\theta)\,d\sigma&\leq \int_0^{\pi/2} e^{-\sigma y} \cos\sigma(1-G(\sigma/\theta))\hat{g}_{u,\theta}^j(\sigma/\theta)\,d\sigma\\&
\ll \theta^{2\beta}\int_0^{\pi/2} \sigma^{-\beta}\, d\sigma\ll \theta^{2\beta}.
\end{align*}
 The integral with $\cos$ replaced by $\sin$ can be treated similarly, so 
$|(u-i\theta)I^j|\ll \theta|I_j|\ll \theta^{2\beta}\le |u-i\theta|^{2\beta}$.

It remains to estimate $I_2$.
If $q<1$, define $H_2(x)=[x]^{-\beta}-x^{-\beta}+c([x])$.
Then
\[
I_2=-\int_0^\infty e^{-(u-i\theta)x}b([x])\,dx+
\int_0^\infty e^{-(u-i\theta)x}H_2(x)\,dx=-I_2'+I_2''.
\]
Clearly, $I_2''=O(1)$ since $H_2$ is integrable.
Let $y=u/\theta$, so $ y\leq 1$.  Substituting  $\sigma=\theta x$,
\begin{align*}
\theta I_2'=\int_0^\infty e^{-\sigma y}  e^{i\sigma} b([\sigma/\theta])\,d\sigma.
\end{align*}
Suppose for definiteness that $b$ is positive and decreasing.
Since $\sigma\mapsto   e^{-\sigma y} b([\sigma/\theta])$   is
decreasing for each fixed value of $u$ and $\theta$, we have
\[
\int_0^\infty e^{-\sigma y}  \cos\sigma\, b([\sigma/\theta])\, d\sigma\leq \int_0^{\pi/2} e^{-\sigma y} \cos\sigma\, b([\sigma/\theta])\, d\sigma\ll \theta^q  \int_0^{\pi/2} \sigma^q\, d\sigma\ll\theta^q.
\]
The integral with $\cos$ replaced by $\sin$ is treated similarly.
Hence $|(u-i\theta)I_2|\ll \theta|I_2|\ll|u-i\theta|^q$.
The proof for $q=1$ is identical, except that in the last step
\[
\int_0^\infty e^{-\sigma y}  \cos\sigma\, b([\sigma/\theta])\, d\sigma\
\le \theta+\theta\int_\theta^{\pi/2}\sigma^{-1}\,d\sigma\ll \theta\log\frac{1}{\theta} \ll |u-i\theta|\log\frac{1}{|u-i\theta|}.
\]
If $q>1$, then we proceed as in case~(i).
\end{proof}

\begin{pfof}{Lemma~\ref{lem-zest-error}} 
Taking $q=2\beta$, it follows 
from Lemma~\ref{lem-lambda} that $(1-\lambda(z))^{-1}P$ has the desired expansion.   By~\eqref{exprT}, Corollary~\ref{cor-H1} and Proposition~\ref{prop-PQ},
$\|T(z)-(1-\lambda(z))^{-1}P\|=O(1)$.
\end{pfof}

\begin{pfof}{Lemma~\ref{lemma-HOT-beta0}} 
The basic argument is similar to the one used in the proof of Lemma~\ref{lem-zest-error}, simplified by the fact that the various integrals are absolutely convergent.   We give the details for the more difficult case (b).
For notational convenience, we write $\lambda(u)$ instead
of $\lambda(e^{-u})$.

Let $G(x)=\mu(\varphi\le x)$ so that $1-G(x)=\ell(x)^{-1}+H(x)$, where $\ell\in O\Pi_{\hat{\ell}}$ and $H(x)=O(\ell(x)^{-2}\hat\ell(x))$. Write
$\lambda(u)=1+\int_0^\infty (e^{-ux}-1)\hat{v}_u(x)\,dG(x)$, where $|\hat{v}_u-1|_\infty\ll \ell(1/u)^{-1}$ as $u\to 0$.  Put $d\hat{G}_u=\hat{v}_u dG$. 
Integrating by parts,
\begin{align*}
1-\lambda(u) & =u\int_0^\infty e^{-ux}g_u(x)(1-G(x))\,dx
\\ & =u\int_0^\infty e^{-ux}g_u(x)\frac{1}{\ell(x)}\,dx +u\int_0^\infty e^{-ux}g_u(x)H(x)\,dx,
\end{align*}
where  $|g_u(x)-1|_\infty\ll \ell(1/u)^{-1}$ as $u\to 0$. Thus,
\begin{align*}
1-\lambda(u)&=\frac{1}{\ell(1/u)}\int_0^\infty e^{-\sigma}\frac{\ell(1/u)}{\ell(\sigma/u)}\,d\sigma +\int_0^\infty e^{-\sigma}(g_u(\sigma/u)-1)\frac{1}{\ell(\sigma/u)}\,d\sigma \\&+\int_0^\infty e^{-\sigma}g_u(\sigma/u)H(\sigma/u)\,d\sigma
=I_1+I_2+I_3.
\end{align*}
Now,
\begin{align*}
I_1&=\frac{1}{\ell(1/u)}\int_0^\infty e^{-\sigma}\,d\sigma +\frac{1}{\ell(1/u)}\int_0^\infty e^{-\sigma}\Big(\frac{\ell(1/u)}{\ell(\sigma/u)}-1\Big)\,d\sigma\\&
=\frac{1}{\ell(1/u)}+\frac{\hat{\ell}(1/u)}{\ell(1/u)^2}\int_0^\infty e^{-\sigma}\frac{\ell(1/u)}{\ell(\sigma/u)}\Big(\frac{\ell(1/u)-\ell(\sigma/u)}
{\hat{\ell}(1/u)}\Big)\,d\sigma.
\end{align*}
By Potter's bounds, for any fixed $\delta>0$, $\ell(1/u)\ell(\sigma/u)^{-1}\ll \sigma^\delta+\sigma^{-\delta}$. Also, by~\cite[Theorem 3.8.6]{BGT} (which
is the analogue of Potter's bounds for de Haan functions), for any fixed $\delta>0$, $\hat{\ell}^{-1}(1/u)|\ell(1/u)-\ell(\sigma/u)|\ll\sigma^\delta+\sigma^{-\delta}$. Hence,
\[
I_1=\frac{1}{\ell(1/u)}+O\Big(\frac{\hat{\ell}(1/u)}{\ell(1/u)^2}\Big).
\]
Next,
\[
|I_2|\ll \frac{1}{\ell(1/u)}\int_0^\infty e^{-\sigma}\frac{1}{\ell (\sigma/u)}\,d\sigma\ll \frac{1}{\ell(1/u)^2}\int_0^\infty e^{-\sigma}\frac{\ell(1/u)}{\ell(\sigma/u)}
d\sigma\ll \frac{1}{\ell(1/u)^2},\] 
and similarly
$|I_3|=O(\ell(1/u)^{-2}\hat\ell(1/u))$.
Putting all these together, we have shown that
$1-\lambda(u)=\ell(1/u)^{-1}+O(\ell(1/u)^{-2}\hat\ell(1/u))$.
Hence
\[
(1-\lambda(u))^{-1}=\ell(1/u)+O(\hat{\ell}(1/u)). 
\]
Again, the result follows by~\eqref{exprT}, Corollary~\ref{cor-H1} and
Proposition~\ref{prop-PQ}.~\end{pfof}

\section{Some contour integrals}\label{app-cont-int}

\begin{prop}\label{first_cont}Let $\beta\in (0,1)$. Then for every $u>0$ and $\theta\neq 0$ fixed, we have
\begin{align*}
 \int_0^R \frac{e^{-(u-i\theta)x}}{[(u-i\theta)x]^{\beta}}(u-i\theta)\,dx=\Gamma(1-\beta)+O(R^{-\beta}), \mbox{ as } R\to\infty.
\end{align*}
 
\end{prop}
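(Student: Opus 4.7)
The plan is to reduce this to a standard computation in complex analysis by making the change of variables $w = (u-i\theta)x$. Writing $s := u - i\theta$, this substitution sends $[0,R]$ to the straight-line segment $\Gamma_R$ in the complex plane from $0$ to $sR$, and transforms the integral into
\[
\int_{\Gamma_R} w^{-\beta} e^{-w}\,dw,
\]
where $w^{-\beta}$ denotes the principal branch, holomorphic on the complement of the non-positive real axis. Since $u>0$, the segment $\Gamma_R$ lies entirely in the open right half-plane, well away from the branch cut.

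Next, I would apply Cauchy's theorem to deform $\Gamma_R$ onto the positive real axis. Specifically, consider the closed contour made up of $\Gamma_R$, a circular arc $A_R$ of radius $|s|R$ joining $sR$ to $|s|R$, and the reversed real segment $[0,|s|R]$. The integrand is holomorphic inside this region, except at $w=0$, where the singularity is integrable since $\beta<1$; this is handled by indenting with a small arc of radius $\epsilon$ whose contribution is $O(\epsilon^{1-\beta}) \to 0$. Cauchy's theorem then yields
\[
\int_{\Gamma_R} w^{-\beta} e^{-w}\,dw = \int_0^{|s|R} x^{-\beta} e^{-x}\,dx + \int_{A_R} w^{-\beta} e^{-w}\,dw.
\]

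It then remains to estimate the two pieces. The real segment contributes
\[
\int_0^{|s|R} x^{-\beta} e^{-x}\,dx = \Gamma(1-\beta) - \int_{|s|R}^\infty x^{-\beta} e^{-x}\,dx = \Gamma(1-\beta) + O(e^{-|s|R/2}),
\]
which is $\Gamma(1-\beta) + O(R^{-\beta})$. For the arc, parametrize $w = |s|R e^{i\phi}$ with $\phi$ running between $0$ and $\arg s \in (-\pi/2,\pi/2)$; the standard estimate yields
\[
\Bigl|\int_{A_R} w^{-\beta} e^{-w}\,dw\Bigr| \ll |s|^{1-\beta} R^{1-\beta} e^{-|s|R \cos(\arg s)},
\]
which is again exponentially small in $R$ (with constants depending on $u,\theta$) and hence $O(R^{-\beta})$. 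Combining the two estimates completes the proof.

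The only subtlety is the careful bookkeeping with the branch of $w^{-\beta}$ and the indentation at $0$; neither is genuinely hard since we stay in the right half-plane and $\beta<1$ makes the singularity integrable. The exponential decay of $e^{-w}$ on $\{\Re w \ge 0\}$ does all the work in making both error terms vanish much faster than the stated $O(R^{-\beta})$ bound requires.
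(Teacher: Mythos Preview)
Your proof is correct and follows the same contour-deformation approach as the paper: substitute $w=(u-i\theta)x$, then use Cauchy's theorem (with an $\epsilon$-indentation at $0$) to rotate the segment onto the positive real axis, bounding the small arc by $O(\epsilon^{1-\beta})$ and the large arc separately.

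The one noteworthy difference is in the large-arc estimate. You exploit the fact that $u>0$ is \emph{fixed} to bound $\cos\phi\ge\cos(\arg s)=u/|s|>0$ on the arc, obtaining exponential decay in $R$. The paper instead enlarges the angular range to $[0,\pi/2]$ and applies the Jordan-type inequality $\cos\psi\ge 1-2\psi/\pi$, which yields only the stated $O(R^{-\beta})$ but with implied constant independent of $u,\theta$. For the proposition as written (with $u,\theta$ fixed) your sharper bound is perfectly valid; however, the paper's uniform bound is what is actually used in the proof of Lemma~\ref{lem-zestimate}, where the proposition is invoked with $R=b/\theta$ and $u,\theta\to 0$ (so that $\cos(\arg s)$ may tend to $0$ and your exponential factor gives no control). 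So your argument proves the proposition, but the paper's version of the arc estimate is the one that carries over to the application.
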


\begin{proof} Write

\[ \int_0^R e^{-(u-i\theta)x}[(u-i\theta)x]^{-\beta}(u-i\theta)\,dx=\int_\Gamma e^{-w}w^{-\beta}\,dw,\] where $\Gamma$ is a line segment of length $R$ emanating from $0$ in the fourth quadrant.
 The angle formed by $\Gamma$  with the positive real axis is $\phi:=-\arg(u-i\theta)
\in[0,\pi/2]$. 

Define the arcs $S_\delta=\{\delta e^{i\psi}:-\phi\le\psi\le0\}$,
$S_R=\{R e^{i\psi}:-\phi\le\psi\le0\}$, and let
$L$ be the line segment from $\delta$ to $R$ along the real axis. By Cauchy's theorem,
\begin{equation*}
\int_\Gamma e^{-w}w^{-\beta}\,dw=\lim_{\delta\to 0}\Big(\int_{S_\delta} +\int_L-\int_{S_R} \Big) e^{-w}w^{-\beta}\,dw.
\end{equation*}
On $S_{\delta}$,
 \begin{align*}
\Bigl|\int_{S_\delta}e^{-w}w^{-\beta} \,dw\Bigr|=\Bigl|\int_{-\phi}^0  e^{-\delta e^{i\psi}} (\delta e^{i\psi})^{-\beta}  \delta i e^{i\psi}\,d\psi\Bigr|\leq\int_{-\pi/2}^0 \delta^{1-\beta}\,d\psi\leq (\pi/2) \delta^{1-\beta}.
\end{align*}
On $S_R$,
\begin{align*}\Bigl|\int_{S_R} e^{-w}w^{-\beta} \,dw\Bigr|=&
\Bigl|\int_{-\phi}^0  e^{-Re^{-\psi}} (R e^{i\psi})^{-\beta} R i e^{i\psi}\,d\psi\Bigr|\le
R^{1-\beta}\int_0^{\pi/2}  e^{-R \cos\psi}\,d\psi\end{align*} 
 Since $\cos\psi>1-2\psi/\pi$ for $\psi\in(0,\pi/2)$ (just draw the graph of $\cos\psi$ and $1-2\psi/\pi$), 
\begin{align*}\Bigl|\int_{S_R} e^{-w}w^{-\beta} \,dw\Bigr|&\le  R^{1-\beta}\int_0^{\pi/2}e^{-R(1-2\psi/\pi)}\,d\psi 
% &=  \frac{\pi R^{-\beta}e^{-R}}{2}(e^{R}-1)
= \frac{\pi}{2 R^{\beta}}(1-e^{-R}).
\end{align*}
Also,
$\int_L e^{-w}w^{-\beta}\,dw =\int_{\delta}^R e^{-t}t^{-\beta}\,dt$. Thus, letting $\delta\to0$,
$\int _\Gamma  e^{-w}w^{-\beta} \,dw=\int_0^R e^{-t}t^{-\beta}\,dt +O(R^{-\beta})=\int_0^\infty e^{-t}t^{-\beta}\,dt +O(R^{-\beta})$, which ends the proof.~\end{proof}

\begin{prop}\label{second_cont}Let $\beta\in (0,1)$. Then
 \[\int_{-\infty}^\infty\frac{1}{(1-i\sigma)^{\beta+1}} e^{-i\sigma}\,d\sigma=\frac{2\pi}{e}\frac{1}{\Gamma(1+\beta)}.\]
\end{prop}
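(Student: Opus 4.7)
The plan is to reduce the integral to the classical Hankel/Bromwich contour representation of $1/\Gamma$. First, I would make the linear change of variable $w = 1 - i\sigma$, under which the real line maps to the vertical line $\{\operatorname{Re} w = 1\}$ traversed from top to bottom. Since $d\sigma = i\, dw$ and $-i\sigma = w - 1$, the integral transforms to
\[
\int_{-\infty}^\infty \frac{e^{-i\sigma}}{(1-i\sigma)^{\beta+1}}\,d\sigma
= -i\,e^{-1}\int_{1-i\infty}^{1+i\infty} \frac{e^w}{w^{\beta+1}}\,dw,
\]
where the branch of $w^{\beta+1}$ is the principal one (which is analytic in a neighbourhood of the line $\operatorname{Re} w = 1$).

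Second, I would invoke the classical Mellin--Barnes representation
\[
\frac{1}{\Gamma(s)} \;=\; \frac{1}{2\pi i}\int_{c-i\infty}^{c+i\infty} e^w\,w^{-s}\,dw, \qquad c > 0,
\]
valid for $\operatorname{Re} s > 1$ (where the integrand decays like $|t|^{-\operatorname{Re} s}$ on the contour, giving absolute convergence). This applies in particular to $s = \beta+1$ with $\beta \in (0,1)$, and taking $c=1$ yields $\int_{1-i\infty}^{1+i\infty} e^w w^{-(\beta+1)}\,dw = 2\pi i / \Gamma(\beta+1)$. Combining with the prefactor from Step~1 produces $-i\,e^{-1} \cdot 2\pi i/\Gamma(\beta+1) = 2\pi/(e\,\Gamma(\beta+1))$, as required.

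If one prefers not to quote the Mellin--Barnes formula, the equivalent route is to deform the vertical line contour to a Hankel contour wrapped around the branch cut on the negative real axis. The arc contributions vanish in the limit because $|e^w|$ is bounded in the closed strip $\operatorname{Re} w \le 1$ while $|w^{-(\beta+1)}|$ decays at rate $|\operatorname{Im} w|^{-(\beta+1)}$, and $\beta + 1 > 1$ makes this decay integrable. On the Hankel contour, parametrising $w = r e^{\pm i\pi}$ collapses the integral to $(e^{-i\pi(\beta+1)} - e^{i\pi(\beta+1)})\int_0^\infty e^{-r} r^{-(\beta+1)}\,dr = -2i\sin(\pi(\beta+1))\,\Gamma(-\beta)$, and Euler's reflection formula $\Gamma(-\beta)\Gamma(1+\beta) = -\pi/\sin(\pi\beta)$ returns the factor $2\pi i/\Gamma(\beta+1)$.

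There is essentially no conceptual obstacle: the condition $\beta \in (0,1)$, hence $\beta+1 \in (1,2)$, ensures absolute convergence on the vertical line and sufficiently fast decay to make every deformation routine. The only mild care required is in choosing a consistent branch of $w^{-(\beta+1)}$ (the principal branch with cut along $(-\infty, 0]$ works throughout), and this is standard.
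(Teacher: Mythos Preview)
Your primary argument is correct and is essentially the paper's approach: both perform the substitution $w=1-i\sigma$ to land on the Bromwich line $\Re w=1$ and then identify the resulting integral with the Hankel representation of $1/\Gamma(\beta+1)$.

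The difference lies in how the Hankel identity itself is justified. In your alternative sketch you deform directly to the negative real axis with the integrand $e^w w^{-(\beta+1)}$ and write the result as $-2i\sin(\pi(\beta+1))\int_0^\infty e^{-r}r^{-(\beta+1)}\,dr = -2i\sin(\pi(\beta+1))\,\Gamma(-\beta)$. This step is not valid: since $\beta+1>1$, the integral $\int_0^\infty e^{-r}r^{-(\beta+1)}\,dr$ diverges at $r=0$, and correspondingly the contribution of the small circle $|w|=\epsilon$ in the Hankel contour is of order $\epsilon^{-\beta}$ and does not vanish as $\epsilon\to0$. The identification with $\Gamma(-\beta)$ is only formal here; the integral representation $\Gamma(s)=\int_0^\infty e^{-r}r^{s-1}\,dr$ requires $\Re s>0$.

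The paper sidesteps exactly this obstruction: it first evaluates $\int_\gamma e^w w^{-\beta}\,dw$ by the Hankel deformation, where now $\beta\in(0,1)$ makes both the small-circle contribution $O(\delta^{1-\beta})$ vanish and the collapsed integral $\int_0^\infty e^{-t}t^{-\beta}\,dt=\Gamma(1-\beta)$ converge. It then recovers the $w^{-(\beta+1)}$ integral by a single integration by parts along $\gamma$. So if you want to supply a self-contained proof rather than quote the Mellin--Barnes formula, the clean fix is precisely this: work one power lower, collapse, then integrate by parts.
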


\begin{proof} First, write
 \[\int_{-\infty}^\infty\frac{1}{(1-i\sigma)^{\beta+1}} e^{-i\sigma}\,d\sigma=\frac{i}{e}\int_{\gamma}\frac{e^w}{w^{\beta+1}}\,dw,\] where the contour $\gamma=\{\Re w=1\}$ is traversed downwards.

We estimate $\int_\gamma e^w w^{-\beta}\,dw$   and then integrate by parts to complete the proof.    The diagram below shows a simple closed 
contour consisting of 
$\gamma$ together with oriented curves $C$, $L^\pm$, $M^\pm$, $S^\pm$, 

\begin{figure}[htb]
\centerline{
\mbox{\epsfig{file=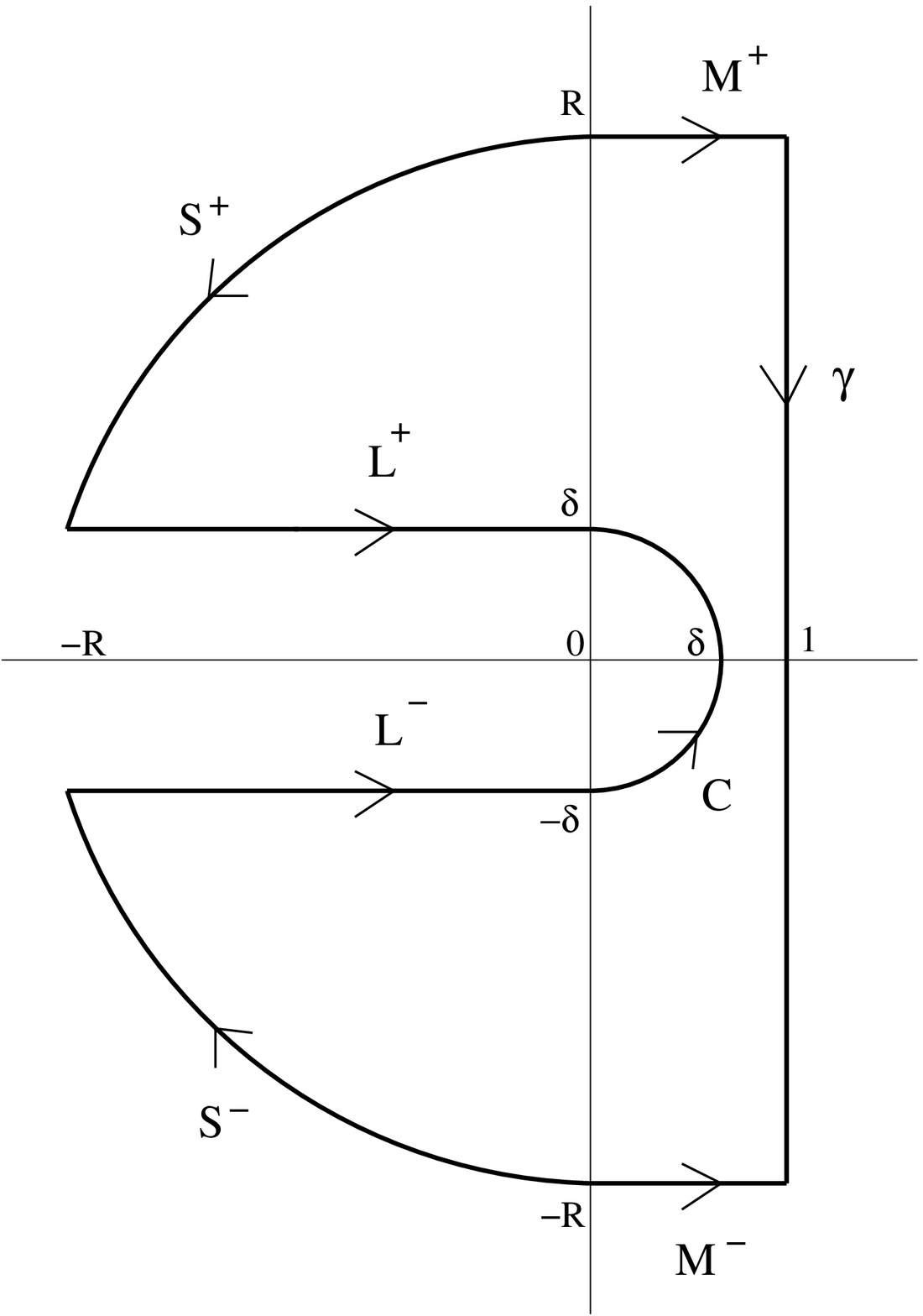,width=6cm}}
}
\end{figure}

By Cauchy's theorem, 
\begin{equation*}\int_{\gamma}\frac{e^w }{w^{\beta}}\,dw=\lim_{R\to\infty}\lim_{\delta\to0^+}\Bigl(-\int_{M^+}+ \int_{S^+}+ \int_{L^+}-\int_C-\int_{L^-}-\int_{S^-}
+\int_{M^-}\Bigr) \frac{e^w }{w^{\beta}}\,dw.
\end{equation*}
Now $C=\{w= \delta e^{i\theta}: \theta\in (-\pi/2,\pi/2)\}$, and so
\begin{align*}\Bigl|\int_C e^w w^{-\beta}\,dw\Bigr| =\Bigl|\int_{-\pi/2}^{\pi/2} \exp(\delta e^{i\theta})\delta^{1-\beta}e^{i\theta(1-\beta)}\,d\theta\Bigr|\ll\delta^{1-\beta}.
\end{align*} 
On  $M^\pm$, we have 
\[
\Bigl|\int_{M^\pm}e^w w^{-\beta}\,dw\Bigr|=\Bigl|\int_0^1e^te^{iR}(t\pm iR)^{-\beta}\,dt\Bigr|\leq R^{-\beta}\int_0^1 e^t \,dt\ll R^{-\beta}. 
\]
On  $S^\pm$, we write
\begin{align*}
 \Bigl|\int_{S^\pm}e^w w^{-\beta}\,dw\Bigr|&=\Bigl|\int_{\pi/2}^{\pi-\epsilon}e^{Re^{\pm i\theta}}(R e^{\pm i\theta})^{-\beta} R i e^{\pm i\theta}\,d\theta\Bigr|\leq
R^{1-\beta}\int_{\pi/2}^{\pi}e^{R \cos\theta}\,d\theta\\&= 
R^{1-\beta}\int_0^{\pi/2}e^{-R \sin\theta}\,d\theta.
\end{align*}
(Here, $\epsilon=\epsilon(R,\delta)$ is chosen so that $S^\pm$ connects with 
$L^\pm$ as shown.) But 
 $\sin\theta>\theta/\pi$
for $\theta\in (0,\pi/2)$. Hence, 
$|\int_{S^\pm}e^w w^{-\beta}\,dw|\le R^{1-\beta}\int_0^{\pi/2}e^{-R\theta/\pi}\,d\theta=\pi R^{-\beta}(1-e^{-R/2})$.

We have shown that 
$\int_{\gamma}\frac{e^w }{w^{\beta}}\,dw=\lim_{R\to\infty}\lim_{\delta\to0^+}\Bigl( \int_{L^+}\frac{e^w }{w^{\beta}}\,dw- \int_{L^-}\frac{e^w }{w^{\beta}}\,dw\Bigr)$.  Now
\begin{align*}\int_{L^\pm}e^w w^{-\beta}\,dw=\int_{-R}^0 e^t e^{\pm i\delta}(t^2+\delta^2)^{-\beta/2} e^{-i\beta\arg(t\pm  i\delta)}\,dt.
\end{align*}
Since $t<0$,  $\lim_{\delta\to0^+}\arg(t\pm i\delta)\to\pm\pi$. 
Hence, the integrand converges pointwise to $e^{\mp i\pi\beta}e^t |t|^{-\beta}$ as $\delta\to0^+$.
Moreover, the integrand is bounded by $e^t |t|^{-\beta}$ 
which is integrable on $[-R,0]$. It follows from the  DCT that
$\lim_{\delta\to0^+}\int_{L^\pm}e^w w^{-\beta}\,dw= e^{\mp i\pi\beta}\int_{-R}^0 e^t|t|^{-\beta}\,dt=  e^{\mp i\pi\beta}\int_0^R e^{-t}t^{-\beta}\,dt$.
Hence,
\begin{align*}\int_{\gamma}\frac{e^w }{w^{\beta}}\,dw= -2i\sin\pi\beta\lim_{R\to\infty}\int_0^R e^{-t}t^{-\beta}\,dt =-2\pi i\frac{1}{\Gamma(\beta)},
\end{align*}
where we have used  the formula $\frac{\sin\pi\beta}{\pi}=\frac{1}{\Gamma(1-\beta)\Gamma(\beta)}$.

Finally,
\begin{align*} 
\frac{i}{e}\int_\gamma\frac{e^w }{w^{\beta+1}}\,dw
&=-\frac{i}{e\beta}\frac{e^w }{w^\beta}\Big|_{w=1-i\sigma,\sigma\to -\infty}^{w=1-i\sigma,\sigma\to \infty}+ \frac{i}{e\beta}\int_\gamma \frac{e^w }{w^\beta} = \frac{2\pi}{e\beta}\frac{1}{\Gamma(\beta)},
\end{align*}
as required.
\end{proof}

\begin{cor}\label{contb}For any $\rho>0$ $\gamma\in(0,1)$,
\[\int_{-1/n^\gamma}^{1/n^\gamma}\frac{e^{-in\theta}}{(\frac1n-i\theta)^{\rho+1}}\,d\theta=\frac{2\pi}{e}\frac{n^{\rho}}{\Gamma(1+\rho)}+O(n^{\rho\gamma}).\]
\end{cor}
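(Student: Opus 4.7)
The plan is to reduce this to Proposition~\ref{second_cont} via the substitution $\sigma = n\theta$. With $d\sigma = n\,d\theta$ and $\frac{1}{n} - i\theta = \frac{1}{n}(1 - i\sigma)$, the integrand transforms as
\[
\frac{e^{-in\theta}}{(\frac{1}{n} - i\theta)^{\rho+1}}\,d\theta = n^\rho\,\frac{e^{-i\sigma}}{(1-i\sigma)^{\rho+1}}\,d\sigma,
\]
and the limits become $\pm n^{1-\gamma}$. So the quantity I need to estimate equals
\[
n^\rho \int_{-n^{1-\gamma}}^{n^{1-\gamma}} \frac{e^{-i\sigma}}{(1-i\sigma)^{\rho+1}}\,d\sigma.
\]

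Next, I would write this truncated integral as the full real-line integral minus its two tails. By Proposition~\ref{second_cont} (applied with $\beta = \rho$), the full integral evaluates to $\frac{2\pi}{e\,\Gamma(1+\rho)}$, which after multiplication by $n^\rho$ gives the claimed main term.

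It remains to bound the tail contribution. Since $|1-i\sigma|^{-(\rho+1)} = (1+\sigma^2)^{-(\rho+1)/2} \ll |\sigma|^{-(\rho+1)}$ for $|\sigma|\geq 1$ (and $n^{1-\gamma}\to\infty$ since $\gamma<1$), I estimate crudely
\[
\left|\int_{|\sigma| > n^{1-\gamma}} \frac{e^{-i\sigma}}{(1-i\sigma)^{\rho+1}}\,d\sigma\right| \ll \int_{n^{1-\gamma}}^{\infty} \sigma^{-(\rho+1)}\,d\sigma \ll n^{-\rho(1-\gamma)}.
\]
Multiplying by $n^\rho$ gives the error $O(n^{\rho\gamma})$, as required.

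There is no real obstacle here; the statement is essentially Proposition~\ref{second_cont} rescaled, with the truncation error controlled by the polynomial decay $|1-i\sigma|^{-(\rho+1)}$, which is integrable at infinity since $\rho > 0$. The only minor point to be careful about is ensuring $n^{1-\gamma} \geq 1$ (so that the tail estimate $|\sigma|^{-(\rho+1)}$ is valid); this holds for all sufficiently large $n$, and the statement is trivial for bounded $n$ since both sides are $O(1) = O(n^{\rho\gamma})$ on any bounded range.
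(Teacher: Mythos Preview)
Your proof is correct and follows essentially the same approach as the paper: the substitution $\sigma=n\theta$, the split into the full-line integral (evaluated via Proposition~\ref{second_cont}) minus the tails, and the crude tail bound $|1-i\sigma|^{-(\rho+1)}\ll|\sigma|^{-(\rho+1)}$ are all exactly what the paper does. Your extra remark about $n^{1-\gamma}\ge1$ for large $n$ is a nice bit of care that the paper leaves implicit.
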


\begin{proof} 
Compute that
\begin{align*}\int_{-1/n^\gamma}^{1/n^\gamma}\frac{e^{-in\theta}}{(\frac1n-i\theta)^{\rho+1}}\,d\theta&=
n^\rho\int_{-n^{1-\gamma}}^{n^{1-\gamma}}\frac{e^{-i\sigma}}{(1-i\sigma)^{\rho+1}}\, d\sigma \\ &
=n^\rho\int_{-\infty}^\infty\frac{e^{-i\sigma}}{(1-i\sigma)^{\rho+1}}\, d\sigma
- n^\rho\Big(\int_{-\infty}^{-n^{1-\gamma}}+\int_{n^{1-\gamma}}^{\infty}\Big)\frac{ e^{-i\sigma}}{(1-i\sigma)^{\rho+1}}\,d\sigma \\
& =\frac{2\pi}{e}\frac{n^\rho}{\Gamma(1+\rho)}+O\Bigl(n^\rho \int_{n^{1-\gamma}}^{\infty}\frac{1}{\sigma^{\rho+1}}\,d\sigma\Bigr)
\\
& =\frac{2\pi}{e}\frac{n^\rho}{\Gamma(1+\rho)}+O(n^{\rho\gamma}),\end{align*}
where we have used Proposition~\ref{second_cont}.
\end{proof}

\section{Tail sequences for (\ref{eq-LSV}) and (\ref{eq-LSV0})}
\label{app-tails}

The following proposition is an improved version of~\cite[Proposition 11.9]{MT}.
Recall that $h$ denotes the density for the measure $\mu$.

\begin{prop}\label{prop-tail-PM} Suppose that $f:[0,1]\to [0,1]$ is given as in~\eqref{eq-LSV} with
$\beta=1/\alpha\in(0,1)$.  
Let $C$ be a compact subset of $(0,1]$.  Then there exists $Y\subset(0,1]$ compact
with $C\subset Y$, such that the first return function $\varphi:Y\to\Z^+$ satisfies
$\mu(\varphi>n)= cn^{-\beta}-b(n)+O(n^{-(\beta+1)})$, where 
$c=\frac14\beta^\beta h(\frac12)$ and $b(n)$ is a decreasing function satisfying $b(n)=O(n^{-2\beta})$.
\end{prop}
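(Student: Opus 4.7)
The plan is to construct $Y$ as a Markov first return set and reduce $\mu(\varphi>n)$ to an integral of $h$ over a shrinking interval near $\frac{1}{2}$, then expand asymptotically.

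Let $f_0(x) = x(1+2^\alpha x^\alpha)$ denote the left branch of $f$, and define the preimage sequence $z_0 = \frac{1}{2}$, $z_{n+1} = f_0^{-1}(z_n)$; this is strictly decreasing with $z_n \downarrow 0$. Since $C$ is compact in $(0,1]$, choose $K$ large enough that $C \subset [z_K, 1]$, and set $Y = [z_K, 1]$. With the convention $z_{-1} := 1$, the intervals $\{[z_j, z_{j-1})\}_{1 \le j \le K}$ together with $[\frac{1}{2}, 1]$ partition $Y$ into a Markov first-return structure: a point $y \in [z_j, z_{j-1})$ with $j \le K$ satisfies $f(y) = f_0(y) \in [z_{j-1}, z_{j-2}) \subset Y$, so $\varphi(y) = 1$; whereas for $y \in [\frac{1}{2}, 1]$, $f(y) = 2y-1$ leaves $Y$ precisely when $f(y) \in [0, z_K)$, and the number of additional $f_0$-iterations before re-entering $Y$ is $k$ exactly when $f(y) \in [z_{K+k}, z_{K+k-1})$.

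It follows that $\varphi(y) > n$ iff $y \in [\frac{1}{2}, \frac{1}{2} + z_{K+n-1}/2)$, so
\[
\mu(\varphi > n) = \int_{1/2}^{1/2 + z_{K+n-1}/2} h(y)\, dy.
\]
Writing $m = K+n-1$ and using the (known) smoothness of $h$ on a neighborhood of $\frac{1}{2}$, a Taylor expansion gives
\[
\mu(\varphi > n) = \tfrac{1}{2}h(\tfrac{1}{2})\,z_m + \tfrac{1}{8} h'(\tfrac{1}{2})\,z_m^2 + O(z_m^3).
\]

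The asymptotics of $z_m$ come from $z_{m-1} = z_m(1 + 2^\alpha z_m^\alpha)$: raising to the power $-\alpha$ and expanding the binomial series yields $z_m^{-\alpha} - z_{m-1}^{-\alpha} = \alpha 2^\alpha + O(z_m^\alpha)$, which upon summation (with $\sum_{k=1}^m z_k^\alpha = O(\log m)$) gives $z_m^{-\alpha} = \alpha 2^\alpha m + O(\log m)$, hence $z_m = \frac{1}{2}\beta^\beta m^{-\beta} + O(\log m \cdot m^{-\beta-1})$. Substituting into the previous display and shifting $m = K+n-1$ yields
\[
\mu(\varphi > n) = c\,n^{-\beta} + C_2\,n^{-2\beta} + O(\log n \cdot n^{-\beta-1}),
\]
with $c = \frac{1}{4}\beta^\beta h(\frac{1}{2})$ and $C_2 = \frac{1}{32} \beta^{2\beta} h'(\frac{1}{2})$. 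Setting $b(n) := -C_2 n^{-2\beta}$ (with the subleading $o(n^{-2\beta})$ contributions absorbed into $b$) yields the claimed form.

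The principal obstacle is verifying monotonicity of $b(n)$. Its leading behavior $-C_2 n^{-2\beta}$ is a monotone power whenever $C_2 \neq 0$, and the subleading corrections of size $\log n \cdot n^{-\beta-1} = o(n^{-2\beta})$ (for $\beta < 1$) do not destroy monotonicity for large $n$. The required sign condition ($b > 0$ decreasing, i.e.\ $C_2 < 0$ and $h'(\frac{1}{2}) < 0$) has to be derived from the Perron-Frobenius equation for $h$, exploiting that the density is large near the indifferent fixed point at $0$ and decreases as one moves rightward; if necessary, one can slightly enlarge $Y$ (adding a finite number of Markov intervals near $0$) to arrange the correct sign for the leading quadratic correction.
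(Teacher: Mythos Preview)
Your construction of $Y=[z_K,1]$ and the identification $\{\varphi>n\}=[\tfrac12,\tfrac12+\tfrac12 z_{K+n-1})$ are correct and match the paper. The gap is in your treatment of $b(n)$.

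The paper does \emph{not} extract a leading coefficient $C_2 n^{-2\beta}$ via Taylor expansion. Instead it uses two structural facts about $h$: it is Lipschitz on $[\tfrac12,1]$, and it is \emph{decreasing} on $(0,1]$ (citing Thaler~2002). Writing $h(x)=h(\tfrac12)-\tilde h(x)$ with $\tilde h\ge0$ Lipschitz and $\tilde h(\tfrac12)=0$, the paper \emph{defines}
\[
b(n)=\int_{1/2}^{y_n}\tilde h(x)\,dx,\qquad y_n=\tfrac12(1+x_n).
\]
Monotonicity of $b(n)$ is then immediate (the integrand is nonnegative and $y_n$ decreases), and $b(n)\le \tfrac12\Lip(\tilde h)(y_n-\tfrac12)^2=O(n^{-2\beta})$. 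No differentiability of $h$ is needed, and there is nothing to ``absorb''.

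Your monotonicity argument, by contrast, is flawed. The assertion that corrections of order $o(n^{-2\beta})$ ``do not destroy monotonicity'' of $-C_2 n^{-2\beta}$ is false in general: what you need is control on the \emph{increments} $e(n)-e(n+1)$, specifically $e(n)-e(n+1)=o(n^{-2\beta-1})$, and a bare bound $e(n)=O(\log n\cdot n^{-\beta-1})$ gives no such control. Indeed $n^{-\beta-1}\log n$ is much larger than $n^{-2\beta-1}$ for every $\beta\in(0,1)$, so an oscillatory error of that size would overwhelm the monotone part. Your proposed fix of enlarging $Y$ merely shifts the index $K$ and cannot affect the sign of $h'(\tfrac12)$ or tame these increments.

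The cleanest repair is to adopt the paper's definition of $b(n)$ as an integral; then the monotonicity of $h$ (which you gesture at but should cite) does all the work, and the Taylor expansion and the $\log$-loss in your $z_m$ asymptotics become irrelevant.
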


\begin{proof}
First, let $Y=[\frac12,1]$.
Let $x_n\in(0,\frac12]$ be the sequence with $x_1=\frac12$ and $x_n=fx_{n+1}$
so $x_n\to0$.   It is well known (see for instance~\cite{LiveraniSaussolVaienti99}) that $x_n\sim \frac12\beta^\beta n^{-\beta}$. Since $x_{n}-x_{n+1}=
fx_{n+1}-x_{n+1}=O(n^{-(\beta+1)})$, we have
$x_n=\frac12\beta^\beta n^{-\beta}+O(n^{-(\beta+1)})$.

The density $h$ is globally Lipschitz on $(\epsilon,1]$
for any $\epsilon>0$ (see for example~\cite{Hu04}
or~\cite[Lemma~2.1]{LiveraniSaussolVaienti99}).  Furthermore, 
it follows from~\cite[Lemma~2]{Thaler02}, see also~\cite{Murray10}, that $h$
is decreasing.   Hence for $x\in[\frac12,1]$ we can write $h(x)=h(\frac12)-\tilde h(x)$ where $\tilde h$ is positive and Lipschitz.

Set $y_n=\frac12(x_n+1)$ (so $fy_n=x_n$).
Then $\varphi=n$ on $[y_n,y_{n-1}]$, hence $\{\varphi>n\}=[\frac12,y_n]$.
It follows that
\[
\SMALL
\mu(\varphi>n)=\int_{1/2}^{y_n} h(x)\,dx=\frac12 x_nh(\frac12)-b(n)=
\frac14\beta^\beta h(\frac12)n^{-\beta}+O(n^{-(1+\beta)})-b(n), 
\]
where $b(n)=\int_{1/2}^{y_n}\tilde{h}(x)\,dx$ is decreasing. Moreover, $b(n)\ll(y_n-\frac12)^2=\frac14 x_n^2
\ll n^{-2\beta}$.

The same estimates are obtained by inducing on $Y=[x_q,1]$ for any fixed $q\ge0$.
\end{proof}

\begin{prop}\label{prop-tail-PM0} Suppose that $f:[0,1]\to [0,1]$ is given as in~\eqref{eq-LSV0}.  
Let $C$ be a compact subset of $(0,1]$.  Then there exists $Y\subset(0,1]$ compact
with $C\subset Y$, such that the first return function $\varphi:Y\to\Z^+$ satisfies
$\mu(\varphi>n)= c\log^{-1}n+O(\log^{-2} n)$,
where $c=\frac 12 h(\frac12)$.
\end{prop}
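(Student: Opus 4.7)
The plan is to mirror the proof of Proposition~\ref{prop-tail-PM}, replacing the polynomial analysis of the backward orbit of $\tfrac12$ by one adapted to the exponential factor $e^{-1/x}$ in~\eqref{eq-LSV0}. First I would take $Y=[\tfrac12,1]$ and define $x_1=\tfrac12$, $x_{n+1}\in(0,\tfrac12)$ inductively by $fx_{n+1}=x_n$, so that the recurrence reads
\[
x_n-x_{n+1}=x_{n+1}^2\,e^{-1/x_{n+1}}.
\]
Setting $y_n=\tfrac12(x_n+1)$, the partitioning argument from the proof of Proposition~\ref{prop-tail-PM} applies verbatim, yielding $\{\varphi=n\}=[y_n,y_{n-1}]$, and hence $\{\varphi>n\}=[\tfrac12,y_n]$ with $\mu(\varphi>n)=\int_{1/2}^{y_n}h(x)\,dx$.

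The heart of the argument is the sharp estimate $x_n=\log^{-1}n+O((n\log n)^{-1})$, which I would obtain by conjugating via $v_n=e^{1/x_n}$: this change of variable linearizes the dynamics at the indifferent fixed point. Applying the mean value theorem to $G(x)=e^{1/x}$ on $[x_{n+1},x_n]$ and inserting the recurrence gives
\[
v_{n+1}-v_n=\frac{x_{n+1}^2}{\xi_n^2}\,e^{1/\xi_n-1/x_{n+1}},\qquad \xi_n\in(x_{n+1},x_n).
\]
The identity $x_n-x_{n+1}=x_{n+1}^2/v_{n+1}$ then yields $|1/\xi_n-1/x_{n+1}|\le 1/v_{n+1}$ and $|x_{n+1}^2/\xi_n^2-1|\ll x_{n+1}/v_{n+1}$, so $v_{n+1}-v_n=1+O(1/v_{n+1})$. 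A bootstrap summation gives $v_n=n+O(\log n)$, so that $1/x_n=\log v_n=\log n+O((\log n)/n)$ and finally $x_n=\log^{-1}n+O((n\log n)^{-1})$.

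The remainder is a direct transcription from Proposition~\ref{prop-tail-PM}. The density $h$ is decreasing (\cite[Lemma~2]{Thaler02}) and globally Lipschitz on $[\tfrac12,1]$ (\cite[Lemma~2.1]{LiveraniSaussolVaienti99}), so writing $h(x)=h(\tfrac12)-\tilde h(x)$ with $\tilde h\ge 0$ Lipschitz and $\tilde h(\tfrac12)=0$ gives
\[
\mu(\varphi>n)=\tfrac12 x_n\,h(\tfrac12)-\int_{1/2}^{y_n}\tilde h(x)\,dx,
\]
and the last integral is $\ll(y_n-\tfrac12)^2=O(x_n^2)=O(\log^{-2}n)$. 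Combined with the expansion of $x_n$ this gives $\mu(\varphi>n)=\tfrac12 h(\tfrac12)\log^{-1}n+O(\log^{-2}n)$, i.e.\ the required asymptotic with $c=\tfrac12 h(\tfrac12)$. To accommodate an arbitrary compact $C\subset(0,1]$, I would choose $q$ with $x_q<\min C$ and induce on $Y=[x_q,1]\supset C$; since $x_{q+n-1}=\log^{-1}n+O(\log^{-2}n)$ for fixed $q$, the same formula and remainder bound go through.

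The main obstacle is the quantitative analysis of the transcendental recurrence $x_n-x_{n+1}=x_{n+1}^2 e^{-1/x_{n+1}}$. The substitution $x\mapsto x^{-\alpha}$ that linearizes the analogous recurrence for~\eqref{eq-LSV} is useless here; the correct linearization is $v=e^{1/x}$, reflecting that the invariant coordinate near the indifferent fixed point of~\eqref{eq-LSV0} is exponential rather than polynomial. Once this is set up, the final remainder $O(\log^{-2}n)$ is driven by the Taylor error of $h$ rather than by the error in $x_n$ (which is smaller by a factor $(n\log n)^{-1}$), so no further subtlety arises.
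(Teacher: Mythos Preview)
Your proposal is correct and follows essentially the same route as the paper: both linearize the backward recurrence via $v_n=e^{1/x_n}$ to obtain $v_{n+1}-v_n=1+O(1/v_{n+1})$, sum to get $v_n=n+O(\log n)$ and hence $x_n=\log^{-1}n+O((n\log n)^{-1})$, and then use the Lipschitz property of $h$ on $[\tfrac12,1]$ exactly as you describe. The only cosmetic differences are that the paper expands $e^{1/x_j}$ directly by Taylor rather than via the mean value theorem, and cites \cite{Thaler83} (not \cite{LiveraniSaussolVaienti99}, which treats~\eqref{eq-LSV}) for the regularity of $h$ in the case of~\eqref{eq-LSV0}.
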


\begin{proof}
First, let $Y=[\frac12,1]$.
Let $x_n\in(0,\frac12]$ be the sequence with $x_1=\frac12$ and $x_n=fx_{n+1}$
so $x_n\to0$. We claim that $x_n\sim\log^{-1}n$, in accordance with~\cite[Remark~2, p. 94]{Thaler83}.  By (\ref{eq-LSV0}), 
\begin{align*} 
e^{1/x_j}&=\exp\Bigl(\frac{1}{x_{j+1}}(1+x_{j+1} e^{-1/x_{j+1}})^{-1}\Bigr)
\\ & =\exp\Bigl(\frac{1}{x_{j+1}}(1-x_{j+1} e^{-1/x_{j+1}}+O(x_{j+1}^2 e^{-2/x_{j+1}}))\Bigr)\\&=
 e^{1/x_{j+1}}\exp(-e^{-1/x_{j+1}}+O(x_{j+1} e^{-2/x_{j+1}})) \\ &  = 
e^{1/x_{j+1}}(1- e^{-1/x_{j+1}}+O(e^{-2/x_{j+1}}))  =
e^{1/x_{j+1}}-1+O(e^{-1/x_{j+1}}).
\end{align*}
Hence $e^{1/x_{j+1}}-e^{1/x_j}=1+O(e^{-1/x_{j+1}})$.  
Summing from $j=1$ to $n-1$, 
\[
e^{1/x_n}=e^{1/x_1}+n-1+O\Bigl(\sum_{j=2}^{n}e^{-1/x_{j}}\Bigr).
\]
Since $x_n\to0$, for $n$ sufficiently large we have 
$e^{1/x_n}\in(\frac12 n,\frac32 n)$ .
Hence $1/x_n\in(\log n+\log\frac12,\log n+\log\frac12)$ and 
$x_n\sim\log^{-1}n$ verifying the claim. 
Moreover, $x_{n}-x_{n+1}=
fx_{n+1}-x_{n+1}=O(1/(n\log^2 n))$, so $x_n=\log^{-1}n+O((n\log^2n)^{-1})$.

Again, the density $h$ is globally Lipschitz on $(\epsilon,1]$
for any $\epsilon>0$ (see~\cite{Thaler83}).  Thus,
in the notation of the proof of Proposition~\ref{prop-tail-PM},
\[
\SMALL
\mu(\varphi>n)=\int_\frac12^{y_n} h(x)\,dx = (y_n-\frac12)h(\frac12)+O(y_n-\frac12)^2 = \frac12 h(\frac12)\log^{-1}n+O(\log^{-2}n),
\]
as required.

The same estimates are obtained by inducing on $Y=[x_q,1]$ for any fixed $q\ge0$.
\end{proof}

\paragraph{Acknowledgements}
The research of IM and DT was supported in part by EPSRC Grant EP/F031807/1.
We wish to thank Roland Zweim\"uller
for useful conversations.

\end{document}